\newtheorem{thm}{Theorem}
\newtheorem{definition}{Definition}
\newtheorem{claim}[thm]{Claim}
\newtheorem{lemma}[thm]{Lemma}
\newtheorem{corollary}[thm]{Corollary}
\newtheorem{problem}[thm]{Problem}
\newtheorem{remark}[thm]{Remark}
\newtheorem{observation}[thm]{Observation}
\newtheorem*{cons}{Construction}
\newtheorem{case}{Case}
\newcommand{\thistheoremname}{}
\newtheorem*{genericthm*}{\thistheoremname}
\newenvironment{namedthm*}[1]
{\renewcommand{\thistheoremname}{#1}%
	\begin{genericthm*}}
	{\end{genericthm*}}
\renewcommand{\P}{\mathcal{P}}
\newcommand{\abs}[1]{\left\lvert{#1}\right\rvert}
\title{$3$-uniform hypergraphs and linear cycles}
\author{
	Beka Ergemlidze\thanks{
		Department of Mathematics, Central European University, Budapest.
		E-mail: beka.ergemlidze@gmail.com
	} \qquad Ervin Gy\H{o}ri \thanks{R\'enyi Institute, Hungarian Academy of Sciences and
	Department of Mathematics, Central European University, Budapest. E-mail: gyori.ervin@renyi.mta.hu} \qquad Abhishek Methuku \thanks{Department of Mathematics, Central European University, Budapest. E-mail: abhishekmethuku@gmail.com}
}
\begin{document}

\maketitle

\begin{abstract}
Gy\'arf\'as, Gy\H{o}ri and Simonovits \cite{GyGyoSim} proved that if a $3$-uniform hypergraph with $n$ vertices has no linear cycles, then its independence number $\alpha \ge \frac{2n} {5}$. The hypergraph consisting of vertex disjoint copies of a complete hypergraph $K_5^3$ on five vertices shows that equality can hold. They asked whether this bound can be improved if we exclude $K_5^3$ as a subhypergraph and whether such a hypergraph is $2$-colorable.

In this paper, we answer these questions affirmatively. Namely, we prove that if a $3$-uniform linear-cycle-free hypergraph doesn't contain $K_5^3$ as a subhypergraph, then it is $2$-colorable. This result clearly implies that its independence number $\alpha \ge \lceil \frac{n}{2} \rceil$. We show that this bound is sharp.

Gy\'arf\'as, Gy\H{o}ri and Simonovits also proved that a linear-cycle-free $3$-uniform hypergraph contains a vertex of strong degree at most 2. In this context, we show that a linear-cycle-free $3$-uniform hypergraph has a vertex of degree at most $n-2$ when $n \ge 10$.

%
%

\end{abstract}

\section{Introduction}

A $3$-uniform hypergraph $H = (V, E)$ consists of a set of vertices $V$ and a set of hyperedges $E$ such that each hyperedge is a $3$ element subset of $V$.
$H$ is $k$ colorable if there is a coloring of the vertices of $H$ with $k$ colors such that there is no monochromatic hyperedge in $H$. Throughout the paper, we mostly use the terminology introduced in \cite{GyGyoSim}.

\begin{definition}
	A \emph{linear tree} is a hypergraph obtained from a vertex by repeatedly adding hyperedges that intersect the previous hypergraph in exactly one vertex. 
	
	
	 A linear path $\mathcal P$ of length $k \ge 0$ is an alternating sequence $v_1,h_{1},v_2,h_{2},...,h_k, v_{k+1}$ of distinct vertices and distinct hyperedges such that $h_i \cap h_{i+1} = \{v_{i+1}\}$ for each $i \in \{1,2,\ldots,k-1\}$, $v_1 \in h_1, v_{k+1} \in h_k$ and $h_i \cap h_{j}= \emptyset$ if $\abs{j-i} \ge 2$. The vertex set $V(\mathcal P)$ of $\mathcal P$ is $\cup^k_{i=1} h_i$ or $\{v_1\}$ if $k = 0$.
	 
	  We say that $\mathcal P$ is a linear path between/joining $v_1$ and $v_{k+1}$ or in general, between vertex sets $A$ and $B$ if $v_1 \in A$, $v_{k+1} \in B$, $h_i \cap A = \emptyset$ for $2 \le i \le k$ and $h_i \cap B = \emptyset$ for $1 \le i \le k-1$. Typically $A$ and $B$ are (vertex sets of) hyperedges or one element sets.
	
	A \emph{linear cycle} of length $k \ge 3$ is an alternating sequence $v_1,h_{1},v_2,h_{2},...,v_{k}, h_k$ of distinct vertices and distinct hyperedges such that $h_i \cap h_{i+1} = \{v_{i+1}\}$ for each $i \in \{1,2,\ldots,k-1\}$, $h_1 \cap h_k = \{v_1\}$ and $h_i \cap h_{j}= \emptyset$ if $1 < \abs{j-i} < k-1$.
	
	A \emph{skeleton} $T$ in $H$ is a linear subtree of $H$ which cannot be extended to a larger linear subtree by adding a hyperedge $e$ of $H$ for which $\abs{e \cap V(T)} = 1$.
\end{definition}

An independent set in $H$ is a set of vertices containing no hyperedge of $H$. More precisely, if $I$ is an independent set of $H$, then there is no $e \in E(H)$ such that $e \subseteq I$. Let $\alpha(H)$ denote the size of the largest independent set in $H$. Gy\'arf\'as, Gy\H{o}ri and Simonovits \cite{GyGyoSim} initiated the study of linear-cycle-free hypergraphs by showing:

\begin{thm}(Gy\'arf\'as, Gy\H{o}ri, Simonovits \cite{GyGyoSim})
	If $H$ is a $3$-uniform hypergraph on $n$ vertices without linear cycles, then it is $3$-colorable. Moreover, $\alpha(H) \ge \frac{2n}{5}$.
\end{thm}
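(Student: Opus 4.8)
The plan is to prove both assertions by induction on the number $n$ of vertices, using the \emph{skeleton} of the opening definition as the main structural device. Since a proper $3$-colouring can be built component by component and independence numbers add over components, one may freely pass to a connected $H$ with at least one edge (in fact the argument below does not even require connectedness). Fix a skeleton $T$ of $H$, say with hyperedges $e_1,\dots,e_t$ adjoined in that order, so that $|V(T)|=2t+1$, and write $B=V(H)\setminus V(T)$. By the maximality built into the definition of a skeleton, no hyperedge of $H$ meets $V(T)$ in exactly one vertex; hence every hyperedge of $H$ is of exactly one of three kinds: one contained in $B$, one contained in $V(T)$, or a \emph{$2$-chord} $\{x,y,z\}$ with $x,y\in V(T)$ and $z\in B$.

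The structural heart of the matter is that chords are \emph{local}. If $\{x,y,z\}$ is a $2$-chord, then $x$ and $y$ lie in a common hyperedge of $T$: otherwise the unique $T$-path between $x$ and $y$ has length at least $2$, and that path together with $\{x,y,z\}$ is a linear cycle (its transition vertices being $x$, $y$, and the internal vertices of the path) — contradicting the hypothesis. A longer but similar case analysis, again producing a linear cycle whenever the configuration spreads out too far, shows that a hyperedge $\{x,y,z\}\subseteq V(T)$ not lying in $T$ is also confined to a bounded window of $e_1,\dots,e_t$. In short, $H$ seen from $V(T)$ is a linear tree carrying only \emph{local} extra hyperedges; and since $K_6^3$ already contains a linear $3$-cycle, the densest such local clusters are sub-hypergraphs of $K_5^3$, the extremal example in the theorem.

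For $3$-colourability: $3$-colour $H[B]$ by induction with colours $\{1,2,3\}$, and extend to $V(T)$ greedily along $e_1,\dots,e_t$. When $e_i$ is processed we must colour its two new vertices so as to keep every hyperedge inside $V(T)$ non-monochromatic and so that, for every $2$-chord $\{x,y,z\}$, the vertices $x,y$ are not both given the colour of $z$; the point is that, by the locality of chords and by repeated appeals to the absence of linear cycles, the constraints that a single new vertex can inherit from already-coloured vertices never use up all three colours, so the extension goes through. This shows $H$ is $3$-colourable, whence $\alpha(H)\ge\lceil n/3\rceil$.

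Finally, the sharper bound $\alpha(H)\ge 2n/5$ does not follow from $3$-colourability (which only gives $n/3$) and needs its own, quantitative argument. Iterate the skeleton construction to get a decomposition $V(H)=V(T_1)\cup\cdots\cup V(T_s)\cup R$ into disjoint parts, where $T_1$ is a skeleton of $H$, each $T_{j+1}$ is a skeleton of $H-(V(T_1)\cup\cdots\cup V(T_j))$, and $R$ spans no hyperedge. The analysis above applies inside each successive residual hypergraph, so every hyperedge meeting some $V(T_j)$ for the first time does so in two or three vertices, the two-vertex case being supported on a pair inside a hyperedge of $T_j$. When a block carries no dense clusters it is a linear tree, with an independent set of size strictly more than $\frac12(2t_j+1)$ — far above the target fraction $\frac25$ — and that surplus is exactly what one spends to repair the hyperedges running between blocks; the blocks where $\frac25$ is tight are the dense clusters (at worst $K_5^3$), and being dense these turn out to be unable to attach to too many cross-block hyperedges. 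The whole task is to carry out this bookkeeping \emph{globally}, choosing an independent set in each block so that their union with $R$ is independent in $H$ and has size at least $\frac25 n$; this is the crux and the step I expect to be hardest — a copy of $K_4^3$ shared between two blocks already kills any clean block-by-block inequality, the small dense cases ($K_4^3$, $K_5^3$) must be handled separately, and the no-linear-cycle hypothesis is needed once more to bound how clusters in different blocks overlap. Optimality of $\frac25$ is witnessed by a disjoint union of copies of $K_5^3$.
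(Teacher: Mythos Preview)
This theorem is quoted from \cite{GyGyoSim} as background and is not proved in the present paper, so there is no in-paper proof to compare against. Assessing your proposal on its own merits: it is a plan rather than a proof, and both halves have genuine gaps.

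For $3$-colourability, the key assertion --- that when greedily colouring the two new vertices of each $e_i$, ``the constraints that a single new vertex can inherit from already-coloured vertices never use up all three colours'' --- is stated but not justified. A new vertex $a$ may lie in many hyperedges $\{a,x,y\}$ with $x,y$ already coloured (pairs inside $B$, pairs inside earlier parts of $T$, and $2$-chords $\{a,c,z\}$ with $z\in B$), and each such pair that is monochromatic forbids one colour for $a$; you must show that at most two colours are ever forbidden, and your appeal to ``locality'' together with an unspecified ``longer but similar case analysis'' for hyperedges inside $V(T)$ does not establish this. The claim that internal hyperedges are ``confined to a bounded window of $e_1,\dots,e_t$'' is neither proved nor made precise enough to be used. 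For $\alpha(H)\ge 2n/5$ you are candid that the cross-block bookkeeping is ``the crux and the step I expect to be hardest'', and you list the obstacles (a $K_4^3$ shared between blocks, the dense cases $K_4^3$ and $K_5^3$) without resolving any of them; as written this portion is a description of the difficulty, not an argument. The iterated-skeleton decomposition is reasonable scaffolding, but until you give an explicit rule for selecting the independent set in each block and verify both the size count and independence across blocks, no bound beyond the $\lceil n/3\rceil$ coming from $3$-colourability has been proved.
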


If the hypergraph does not contain the complete $3$-uniform hypergraph $K_5^3$ as a subhypergraph then a stronger theorem can be proved, answering a question of Gy\'arf\'as, Gy\H{o}ri and Simonovits.

\begin{thm}
\label{2coloring}
If a $3$-uniform linear-cycle-free hypergraph $H$ doesn't contain $K_5^3$ as a subhypergraph, then it is $2$-colorable.
\end{thm}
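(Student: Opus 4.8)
The plan is to induct on the number of hyperedges (or vertices) and to exploit the structure of a \emph{skeleton} $T$ in $H$, following the general strategy of Gy\'arf\'as, Gy\H{o}ri and Simonovits but pushing harder using the $K_5^3$-free hypothesis. First I would dispose of the trivial cases: if $H$ has an isolated vertex, a vertex lying in no hyperedge, or more generally a vertex $v$ of strong degree $0$ or $1$, we can delete it (or delete it together with its unique hyperedge), $2$-color the rest by induction, and then place $v$ so as not to complete a monochromatic edge — here we use that a single uncolored vertex in at most one edge always has a safe color. So we may assume every vertex has strong degree at least $2$. If $H$ is disconnected in the sense that no linear path joins two of its pieces, color each piece separately; hence we may assume a mild connectivity.

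Next I would fix a skeleton $T$ of $H$, a maximal linear subtree. The key point, and the reason the argument works, is that because $H$ is linear-cycle-free, every hyperedge $e \notin T$ satisfies $|e \cap V(T)| \neq 1$ (else $T$ was not maximal), and in fact $|e \cap V(T)| = 2$ or $3$; moreover two vertices of $V(T)$ that lie in a common hyperedge outside $T$ must be "close" in $T$, since a long tree-path between them plus the extra hyperedge would build a linear cycle. This should force the "non-tree" structure to be very local: hyperedges meeting $V(T)$ in two points correspond to adding a chord near a leaf, and the $K_5^3$-free condition caps how many such chords can pile up on the same few vertices. I would then $2$-color the leaves and internal vertices of $T$ greedily from the leaves inward: a linear tree is itself easily $2$-colorable (color each new hyperedge's two new vertices to avoid monochromaticity, which is possible since each added edge introduces two fresh vertices), and I would argue that this coloring can be chosen to also respect the bounded number of chord-edges hanging off each vertex. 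Vertices of $H$ not in $V(T)$ lie only in hyperedges meeting $V(T)$ in $\ge 2$ points (by maximality), so they each see a controlled local neighborhood and can be colored last.

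The main obstacle I expect is precisely the local analysis around a leaf or a low-depth branch vertex of $T$: there the chords (hyperedges meeting $T$ in two vertices) together with the tree-edges can form a dense cluster, and I must show that either this cluster is $2$-colorable on its own, or it contains a linear cycle (contradiction), or it contains $K_5^3$ (contradiction). Ruling out $K_5^3$ is where the hypothesis is spent: if five vertices carried all $\binom{5}{3}=10$ triples we would be done, so I need to show that a linear-cycle-free cluster on a small vertex set that is \emph{not} $2$-colorable must be exactly $K_5^3$ — essentially a finite case check on how many triples can sit on $4$, $5$, or $6$ vertices without creating a linear cycle of length $3$ (i.e.\ without three hyperedges pairwise sharing a single distinct vertex), combined with the known fact that the only non-$2$-colorable $3$-uniform hypergraph on at most $6$ vertices with no linear triangle is $K_5^3$. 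I would isolate this as a lemma ("a linear-triangle-free $3$-graph on $\le 6$ vertices is $2$-colorable unless it contains $K_5^3$") and then feed it into the skeleton induction; assembling the global coloring from the tree-coloring and the safely-colored clusters is then bookkeeping, with care taken that a vertex shared between the skeleton and a cluster receives a consistent color, which is arranged by coloring the skeleton first and treating its colors as boundary conditions for each cluster.
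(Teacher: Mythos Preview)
Your overall strategy---take a skeleton $T$, $2$-color $V(T)$ so that every hyperedge meeting $V(T)$ in at least two points is already bichromatic on those two points, then extend to the rest---is exactly the shape of the paper's argument. But the proposal has a genuine gap at the load-bearing step: you assert that a greedy coloring of $T$ ``from the leaves inward'' can be chosen to respect all the chord-edges, and you plan to reduce any obstruction to a finite $\le 6$-vertex check. Neither of these is justified, and the paper's proof shows that this step is where essentially all the work lies.

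Concretely: a hyperedge $vab$ with $a,b\in V(T)$ does force $a,b$ to lie in a common edge of $T$ (your locality observation is correct), but for a single tree-edge $abc$ there can be many such ``associated'' hyperedges, and they interact with the associated hyperedges of the neighboring tree-edges. A naive greedy coloring has no mechanism for choosing, at each tree-edge, \emph{which} pair gets split colors so that all these constraints are simultaneously satisfiable across the whole tree. The paper handles this by (i) distinguishing \emph{strongly} versus \emph{weakly} associated outside vertices and proving a structure lemma (their Lemma~\ref{association}) limiting how they can coexist on a thick edge; (ii) identifying certain $5$-vertex ``special blocks'' (two adjacent tree-edges whose union is closed under association) and peeling them off recursively---this is precisely and only where the $K_5^3$-free hypothesis is spent, to $2$-color that specific $5$-vertex induced subhypergraph; and (iii) on a tree with no special blocks, building an auxiliary simple graph $G_T$ on $V(T)$ by a four-step rule and proving, via a delicate case analysis, that $G_T$ is itself a tree, whose $2$-coloring is the desired one. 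Your ``$\le 6$-vertex cluster'' lemma is not the right reduction: the obstructions are not arbitrary small dense pieces but very specific $5$-vertex configurations straddling two adjacent tree-edges, and outside those the global consistency of the coloring is what requires proof, not a local check.

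In short, your outline reaches the correct decomposition (skeleton plus outside vertices, with a carefully chosen $2$-coloring of the skeleton doing all the work), but the mechanism that actually produces that coloring---the special-block reduction and the auxiliary-tree construction---is missing, and a greedy or small-case argument does not obviously substitute for it.
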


\begin{corollary}
If a $3$-uniform linear-cycle-free hypergraph $H$ on $n$ vertices doesn't contain $K_5^3$ as a subhypergraph, then $\alpha(H) \ge \lceil \frac{n}{2} \rceil$ and the bound is sharp.
\end{corollary}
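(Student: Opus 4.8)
The inequality is immediate from Theorem~\ref{2coloring}. The plan is simply to fix a proper $2$-colouring of $H$ (one exists by that theorem) and let $C$ be a largest colour class; then $|C|\ge\lceil n/2\rceil$, and $C$ is independent since a monochromatic set contains no hyperedge. Hence $\alpha(H)\ge|C|\ge\lceil n/2\rceil$.

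For sharpness I would exhibit an explicit extremal family: let $H_m$ be the vertex-disjoint union of $m$ copies of $K_4^3$, so that $H_m$ has $n=4m$ vertices. Three things then need to be checked. First, $H_m$ is linear-cycle-free: any two hyperedges of $K_4^3$ meet in exactly two vertices, never in exactly one, while hyperedges from distinct copies are disjoint; since a linear cycle requires each consecutive pair of hyperedges to meet in exactly one vertex, $H_m$ contains no linear cycle. Second, $H_m$ is $K_5^3$-free: the vertex set of every hyperedge of $H_m$ lies inside a single four-vertex component, so a sub-hypergraph on five vertices either fits in one component (impossible) or spreads over at least two components and then carries no hyperedge at all. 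Third, $\alpha(H_m)=2m$: an independent set meets each copy of $K_4^3$ in a hyperedge-free set, and any three vertices of $K_4^3$ span a hyperedge, so each component contributes at most two vertices; taking two vertices from each component attains this, giving $\alpha(H_m)=2m=n/2=\lceil n/2\rceil$.

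To obtain equality for the other residues of $n$ modulo $4$, I would attach a bounded gadget to $H_m$: an isolated vertex when $n\equiv1\pmod4$ (which raises $\alpha$ to $2m+1=\lceil n/2\rceil$) and a disjoint single hyperedge $K_3^3$ when $n\equiv3\pmod4$ (which gives $\alpha=2m+2=\lceil n/2\rceil$); neither gadget creates a linear cycle or a copy of $K_5^3$, and both verifications are the same short ones as above. Even without these adjustments, the family $\{H_m\}_{m\ge1}$ already certifies that the bound $\lceil n/2\rceil$ cannot be improved. I do not expect any real obstacle: everything reduces to the routine checks just listed. The only mildly subtle point is that $K_4^3$, although complete, contains no linear cycle — which is precisely the observation that all of its hyperedges pairwise share two vertices, leaving no room for the ``intersection of size exactly one'' that consecutive hyperedges of a linear cycle must have.
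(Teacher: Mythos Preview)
Your derivation of $\alpha(H)\ge\lceil n/2\rceil$ from Theorem~\ref{2coloring} is exactly the paper's argument. For sharpness the paper uses a different construction: a single connected hypergraph $H_n$, built recursively from $H_3$ (one hyperedge) or $H_4=K_4^3$ by repeatedly adjoining two new vertices $v_{i+1},v_{i+2}$ together with all hyperedges $v_{i+1}v_{i+2}v_j$ for $j\le i$. One checks $\alpha(H_{i+2})=\alpha(H_i)+1$, so $\alpha(H_n)=\lceil n/2\rceil$ for \emph{every} $n\ge3$.

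Your disjoint-union approach is pleasantly elementary, but you have silently skipped the residue $n\equiv2\pmod4$. None of your gadgets handles it: two isolated vertices add $2$ to $\alpha$ while $\lceil n/2\rceil$ rises by only $1$, and two disjoint single hyperedges have the same defect. An easy fix, in the spirit of the paper's construction, is to replace one of your $K_4^3$ blocks by a six-vertex block consisting of a $K_4^3$ on $\{a,b,c,d\}$ plus new vertices $e,f$ and the four hyperedges $efa,efb,efc,efd$; this block is linear-cycle-free and $K_5^3$-free by the same pairwise-intersection check you already gave, and has independence number $3$. Using $(m-1)$ copies of $K_4^3$ together with one such block gives $\alpha=2(m-1)+3=2m+1=\lceil(4m+2)/2\rceil$. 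With that addition your construction achieves equality for all $n$; the paper's recursive $H_n$ buys a uniform treatment without any case split on residues.
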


Indeed, from Theorem \ref{2coloring}, it trivially follows that $\alpha(H) \ge \lceil \frac{n}{2} \rceil$. The hypergraph $H_n$ on $n$ vertices obtained from the following construction shows that this inequality is sharp. Let $H_3$ be the hypergraph on $3$ vertices $v_1, v_2, v_3$ such that $v_1v_2v_3 \in E(H_3)$ and let $H_4$ be the complete $3$-uniform hypergraph $K_4^3$ on $4$ vertices $v_1, v_2, v_3, v_4$. Now for each $3 \le i \le n-2$ let us define the hypergraph $H_{i+2}$ such that $V(H_{i+2}) := V(H_{i}) \cup \{v_{i+1}, v_{i+2} \}$ and  $E(H_{i+2}):= E(H_i) \cup \{v_{i+1} v_{i+2} v_j\}_{j=1}^i $. If $n$ is even, we start this iterative process with the hypergraph $H_4$ and if $n$ is odd, we start with $H_3$. Notice that $\alpha(H_{i+2}) = \alpha(H_i)+1$ for each $i$, which implies that $\alpha(H_n) = \lceil \frac{n}{2} \rceil$.

It is another natural problem to bound the number of hyperedges or different types of degrees of vertices in hypergraphs with no linear cycles. The most plausible is the \emph{degree} of a vertex $v \in V$ what is simply the number of hyperedges of $H$ containing $v$. Given a $3$-uniform hypergraph $H$ and $v \in V(H)$,  the \emph{link} of $v$ in $H$ is the graph with vertex set $V(H)$ and edge set $\{xy : vxy \in E(H)\}$. The \emph{strong degree} $d^+(v)$ of $v \in V(H)$ is the maximum number of independent edges in the link of $v$. It is interesting and known for many years that the maximal number of hyperedges in a $3$-uniform hypergraph without linear cycles is ${n-1 \choose 2}$, which is the maximum number of hyperedges without a linear triangle \cite{CSK, FF}.
The relation to the strong degree is proved recently.

\begin{thm}(Gy\'arf\'as, Gy\H{o}ri, Simonovits \cite{GyGyoSim})
	Let $H$ be a $3$-uniform hypergraph without linear cycles. Then, it has a vertex $v$ whose strong degree $d^+(v)$ is at most $2$.
\end{thm}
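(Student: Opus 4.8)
The plan is to locate a vertex of small strong degree on the boundary of a maximal linear subtree.

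\smallskip

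\emph{Reductions and set-up.} If $H$ contains no linear subtree with two edges, then no two hyperedges of $H$ meet in exactly one vertex (two such would themselves form a $2$-edge linear tree), so every two hyperedges meet in $0$ or at least $2$ vertices; then for each vertex $v$ the edges of its link graph $L_v$ pairwise intersect, hence $L_v$ is a star or a triangle and $d^+(v)$, the matching number of $L_v$, is at most $1$. So we may assume $H$ has a linear subtree with $\ge 2$ edges, and we fix a linear subtree $T$ of $H$ with the \emph{maximum} possible number of edges; in particular $T$ is a skeleton. Building $T$ up one edge at a time, the last added edge $h=\{v,x,y\}$ meets the rest of $T$ in a single vertex $v$, so $x$ and $y$ lie in no other edge of $T$ and $T-h$ is a linear tree. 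We will show $d^+(x)\le 2$. Since the edge of $L_x$ coming from $h$ is $vy$, it suffices to prove:

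\emph{Key Claim. Every hyperedge $e$ with $x\in e$ and $e\ne h$ contains $v$ or $y$.} Granting this, $\{v,y\}$ is a vertex cover of $L_x$, so the matching number $d^+(x)$ of $L_x$ is at most $2$, as required.

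\smallskip

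\emph{Proof of the Key Claim, non-degenerate cases.} Suppose $e=\{x,a,b\}\ne h$ contains $x$ but $e\cap\{v,y\}=\varnothing$; we produce a linear cycle, contradicting the hypothesis. As $h$ is the unique edge of $T$ through $x$, we have $e\notin E(T)$, so by maximality of $T$ it cannot be attached to $T$ in one vertex, i.e.\ $|e\cap V(T)|\ge 2$; say $a\in V(T)$, so $a\in V(T)\setminus h$. Let $R$ be the unique linear path in $T$ from $x$ to $a$; its first edge is $h$, and since $a\notin h$ it has at least two edges. If $b\notin V(R)$, then $e$ meets $R$ exactly in $\{x,a\}$ and $e\notin E(R)$, so $R+e$ is a linear cycle. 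If $b\in V(R)$, let $R'$ be the subpath of $R$ from $b$ to $a$; it avoids $x$, and if it has at least two edges then $R'+e$ is a linear cycle. So $R'$ consists of a single edge, i.e.\ $\{a,b\}$ lies in one edge $g$ of $T-h$. If one of $a,b$ — say $a$ — is the vertex of $g$ closest to $x$ in $T$, then $a$ also lies on some edge $g'\ne g$ of $T$, namely the edge of $R$ reaching $a$, and the closed walk that goes $x,\,e,\,a,\,g',\dots$ back along $R$ and then along $h$ to $x$ is a linear cycle — here one uses that $e$ meets $h$ only in $x$ (because $e$ avoids $v,y$) and that the edges $e,g',\dots$ are distinct. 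Each of these subcases contradicts linear-cycle-freeness.

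\smallskip

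\emph{The remaining degenerate case — the main obstacle.} The only configuration left is when $a$ and $b$ are the two vertices of $g$ \emph{farthest} from $x$; then $e$ and $g$ share both $a$ and $b$, the cycle-through-$g$ idea collapses, and in fact $T\cup\{e\}$ carries no linear cycle using $e$ at all (deleting $g$ from $T$ separates $a$ from $b$, and $e$ merely re-merges the resulting pieces into a tree). This is the crux. The natural remedy is to re-choose the skeleton: $(T\setminus\{g\})\cup\{e\}$ is again a linear subtree of $H$ with the same number of edges, hence again a maximum skeleton, and one re-runs the argument on it. To see that this process terminates one needs a monotone or extremal quantity — roughly, a measure of how far the chosen private vertex sits from the rest of the skeleton — that strictly improves at each such replacement, so that after finitely many steps the degenerate case cannot recur, or $H$ has been pared down to a bounded subhypergraph for which $d^+\le 2$ is immediate. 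Making this termination argument precise, i.e.\ dealing with the ``two shared vertices'' degeneracy, is the only genuinely delicate point; everything else is the routine analysis above.
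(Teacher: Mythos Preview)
This theorem is quoted from Gy\'arf\'as, Gy\H{o}ri and Simonovits \cite{GyGyoSim}; the present paper does \emph{not} supply its own proof, so there is nothing in the paper to compare your argument against. What remains is to evaluate your proposal on its own merits.

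Your write-up is not a proof: you yourself say so. The reductions and the Key Claim are well chosen, and the non-degenerate cases are essentially correct (with one redundancy noted below). But the degenerate case --- $e=\{x,a,b\}$ with $\{a,b\}$ the pair of $g\in E(T)$ opposite to $x$ --- is exactly the content of the theorem, and you do not handle it. Two concrete issues with the proposed remedy:
\begin{itemize}
\item After the swap $T\mapsto T'=(T\setminus\{g\})\cup\{e\}$ (which is indeed a maximum skeleton with $V(T')=V(T)$), the vertex $x$ now lies in \emph{two} edges of $T'$, so it is no longer a leaf vertex; you must choose a new leaf and a new candidate vertex. You give no rule for this choice.
\item You assert the existence of ``a monotone or extremal quantity \ldots\ that strictly improves at each such replacement'' but do not exhibit one. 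Without it the replacement procedure may cycle; as written, the argument is circular (pun intended). This is not a technicality you can defer --- it is the whole difficulty. A small example: with $E(H)=\{123,\,345,\,145\}$ and $T=\{123,345\}$, taking $x=1$ lands you in the degenerate case; swapping to $T'=\{123,145\}$ and then picking $x=3$ lands you right back in the degenerate case. Only the choice $x=2$ works, and nothing in your procedure selects it.
\end{itemize}

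A minor point on the case analysis: within the situation ``$R'$ is a single edge $g$'', your subcase ``one of $a,b$ is the vertex of $g$ closest to $x$'' is vacuous. If, say, $a$ is the connection vertex of $g$, then the path $R$ from $x$ to $a$ avoids $g$ entirely; since two edges of a linear tree share at most one vertex, $b$ cannot then lie in $V(R)$ at all, so you were already in the case $b\notin V(R)$. Thus the only genuine leftover is precisely the opposite-pair configuration you call degenerate.

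In short: the skeleton/leaf approach is natural (and is in the spirit of Lemma~\ref{opposite_pair} of this paper, used there for a different theorem), but the termination argument is the proof, and it is missing.
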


In this paper, we show a similar and perhaps more natural theorem concerning the degree of a linear-cycle-free hypergraph.

\begin{thm}
\label{degree_condition}
Let $H$ be a $3$-uniform hypergraph on $n \ge 10$ vertices without linear cycles. Then, there is a vertex whose degree is at most $n-2$.
\end{thm}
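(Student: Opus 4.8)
The plan is to argue by contradiction: suppose every vertex of $H$ has degree at least $n-1$. First I would dispose of the disconnected case. If $H=H_1\sqcup H_2$, then $\delta(H_i)\ge n-1$ for each part; if some $H_i$ has at least $10$ vertices, induction on $n$ applied to $H_i$ produces a vertex of degree at most $|V(H_i)|-2<n-2$, and a part on $t\le 9$ vertices is impossible, since a linear-triangle-free hypergraph on $t$ vertices has at most $\binom{t-1}{2}\le\binom 8 2$ hyperedges while $\delta(H_i)\ge 9$ forces $3|E(H_i)|\ge 9t$, contradicting this after a short check of the extremal linear-triangle-free hypergraphs on at most $9$ vertices. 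So assume $H$ is connected. Let $T$ be a linear subtree of $H$ with the largest number of hyperedges (so $T$ is a skeleton), put $k=|E(T)|$, $|V(T)|=2k+1$, $U=V(H)\setminus V(T)$, $N=|U|=n-2k-1$. By maximality of the skeleton no hyperedge outside $T$ meets $V(T)$ in exactly one vertex, so every $e\in E(H)\setminus E(T)$ has $|e\cap V(T)|\in\{0,2,3\}$; call these hyperedges of type $0$, $2$, $3$ respectively. Also $k\ge 2$: if $k=1$ then no two hyperedges of $H$ meet in exactly one vertex, so the $\ge n-1$ hyperedges through a fixed vertex $v$ pairwise meet in at least two vertices, which in the link of $v$ is a family of $\ge n-1$ pairwise intersecting edges; such a family either shares a common vertex $w$ (forcing $d(v)\le n-2$) or is a triangle (forcing $d(v)=3$), both contradicting $d(v)\ge n-1$ for $n\ge 10$.

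The core of the argument is two structural claims. \textbf{(Claim A)} If $e$ is of type $2$ with $e\cap V(T)=\{x,y\}$, then $x$ and $y$ lie in a common hyperedge of $T$: otherwise the shortest linear path $h_1,\dots,h_m$ of $T$ from $x$ to $y$ has $m\ge 2$, and writing $e=\{x,y,u\}$ with $u\in U$ one checks $e\cap h_1=\{x\}$, $e\cap h_m=\{y\}$ and $e\cap h_j=\emptyset$ for $1<j<m$ (using $u\notin V(T)$ and minimality of the path), so $h_1,\dots,h_m,e$ is a linear cycle of length $m+1\ge 3$, a contradiction. \textbf{(Claim B)} If $e=\{x,y,z\}$ is of type $3$, then $e$ surrounds a vertex of $T$: there are $w\in e$ and $g_1,g_2\in E(T)$ with $w\in g_1\cap g_2$, one vertex of $e\setminus\{w\}$ in $g_1\setminus\{w\}$ and the other in $g_2\setminus\{w\}$. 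This follows by the same principle: whenever the linear path of $T$ between two of $x,y,z$ has length $\ge 2$ and avoids the third vertex, $e$ closes it into a linear cycle, and inspecting the cases where this cannot occur leaves exactly the stated local picture.

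Given these, I would count hyperedges by type. By Claim A each type-$2$ hyperedge is determined by a vertex of $U$ together with one of the at most $3k$ pairs contained in a hyperedge of $T$, so there are at most $3kN$ of them; the type-$0$ hyperedges form the linear-cycle-free hypergraph $H[U]$, so there are at most $\binom{N-1}{2}$ of them; and by Claim B the type-$3$ hyperedges are governed by the local structure of $T$ at its vertices. Summing $d_H(v)\ge n-1$ over $v\in V(T)$ gives $3k+3\,(\text{type }3)+2\,(\text{type }2)\ge (2k+1)(n-1)$, and summing over $v\in U$ gives $(\text{type }2)+3|E(H[U])|\ge N(n-1)$; feeding the upper bounds into these, together with the known maximum $|E(H)|\le\binom{n-1}{2}$, yields for each $k$ a numerical inequality that fails once $n\ge 10$. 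For $k$ close to $(n-1)/2$ it is cleaner to argue directly: a leaf vertex $a$ of a pendant hyperedge $\{p,a,b\}$ of $T$ has every hyperedge through it containing $\{a,p\}$ or $\{a,b\}$ (by maximality and Claims A, B), so $d_H(a)=d(a,p)+d(a,b)-1$ with both codegrees forced to be small, giving $d_H(a)\le n-2$ unless $d_T(p)$ exceeds $k$, which is impossible.

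The main obstacle I expect is obtaining a sufficiently sharp bound on the number of type-$3$ hyperedges: a crude count is quadratic in $k$ and too weak near $k\approx n/2$, so one must exploit cycle-freeness to show that two distinct type-$3$ hyperedges cannot occupy the same corner of $T$, bringing the bound down to essentially $O(k)$; after that, making the several numerical cases close simultaneously is where the hypothesis $n\ge 10$ is used. The threshold cannot be removed altogether, since $K_4^3$ (on $4$ vertices, $\delta=n-1$) and $K_5^3$ (on $5$ vertices, $\delta=n+1$) are linear-cycle-free with minimum degree exceeding $n-2$.
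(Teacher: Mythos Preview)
Your approach is quite different from the paper's and, as written, has a real gap. Claim~B is false: take $T=\{abc,\,cde\}$ and the hyperedge $e=abd\in E(H)$. This is a type-$3$ hyperedge (all three vertices lie in $V(T)$, and $e\notin E(T)$), and it creates no linear cycle since $abd\cap abc=\{a,b\}$ has size~$2$; yet none of $a,b,d$ lies in the intersection of two hyperedges of $T$, so $e$ does not ``surround'' any vertex in your sense. The correct local description of a type-$3$ hyperedge is more permissive (two of its vertices may sit in one tree edge and the third in a neighbouring one), and once you allow this, the counting you propose collapses: your assertion that ``two distinct type-$3$ hyperedges cannot occupy the same corner of $T$'' is also false (already in the $9$-vertex example of Remark~\ref{9vertex}, along a path skeleton many type-$3$ hyperedges share the same branching vertex), so there is no obvious route to an $O(k)$ bound. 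Since the promised ``numerical inequality that fails once $n\ge 10$'' is never written down, the proposal is really a plan whose decisive step is missing.

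By contrast, the paper does not attempt any global hyperedge count. It proves two structural lemmas and iterates. First (Lemma~\ref{no_universalpair}), if no pair $\{u,v\}$ is \emph{universal} (meaning $uvx\in E(H)$ for all $x$), then some specific vertex --- an endpoint of a longest linear path, analysed via ``windmills'' and the opposite-pair Lemma~\ref{opposite_pair} --- has degree at most $|V(H)|-2$. Second (Lemma~\ref{final_lemma}), if $\delta(H)\ge |V(H)|+s$ then a universal pair must exist, and one can find a second universal pair and delete all four vertices to obtain a subhypergraph $H_0$ with $\delta(H_0)\ge |V(H_0)|+s+2$. Starting from $\delta\ge n-1$ with $n\ge 10$ and iterating drives the degree surplus above what a hypergraph on $\le 5$ vertices can support, giving the contradiction. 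This is where $n\ge 10$ is genuinely used, and it explains the $9$-vertex extremal example. Your counting framework would need a substitute for Lemma~\ref{no_universalpair} of comparable strength, and that is precisely the part you have not supplied.
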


\begin{remark}
\label{9vertex}
There is a $3$-uniform hypergraph on $9$ vertices without linear cycles where the degree of every vertex is $8$. This hypergraph $H$ is defined by taking a copy of $K_4^3$ on vertices $\{u_1, u_2, v_1, v_2\}$ and a vertex disjoint copy of $K_5^3$ such that $u_1u_2x, v_1v_2x \in E(H)$ for each $x \in V(K_5^3)$ and there are no other hyperedges in $H$.
\end{remark}

\begin{remark}
\label{degreesharpness}
Theorem \ref{degree_condition} cannot be improved because there is a $3$-uniform hypergraph $H'$, with $E(H') := \{ xab\mid a,b \in V(H')\setminus \{x\} \}$ for a fixed vertex $x \in V(H)$, in which every vertex has degree at least $n-2$.
\end{remark}
The paper is organized as follows: In section \ref{notationsection} we introduce some important definitions. In section \ref{theorem2} we prove Theorem \ref{2coloring} by means of our main lemma - Lemma \ref{skeleton_coloring} (which is proved in section \ref{mainlemma}). In section \ref{degree} we prove Theorem \ref{degree_condition}. Finally in section \ref{concluding_remarks}, we present some concluding remarks and open questions.

\section{Definitions}
\label{notationsection}
The following notions of \emph{association} are used throughout the paper.

\begin{definition}
	Given a vertex $v \in V(H)$ and a hyperedge $abc \in E(H)$ such that $ v \not \in\{a,b,c\}$, we say that $v$ is \emph{``strongly associated"} to $abc$ if at least two of the three edges $vab$, $vbc$, $vca$ are in $E(H)$ . We say that $v$ is \emph{``weakly associated"} to $abc$ if exactly one of the three edges $vab$, $vbc$, $vca$ is in $E(H)$. We say that $v$ is associated to $abc$ if it is either strongly or weakly associated.
	
	The set of pairs $\{\{x,y\} \subset \{a,b,c\} \mid vxy \in E(H) \}$ is called the ``support" of $v$ in $abc$, denoted $s_{abc}(v)$ and these hyperedges $vxy$ are called  ``supporting" hyperedges of $v$ in $abc$.
\end{definition}

\begin{remark}
	
	The main motivation for the above definition is the following fact. If $P$ is a linear path ending in a hyperedge $abc$ and $v \not \in V(P)$ is a vertex strongly associated to $abc$ then $P$ can be extended by one of the supporting hyperedges of $v$ in $abc$ to a longer linear path.

\end{remark}

\section{Proof of Theorem \ref{2coloring}: $2$-colorability of linear-cycle-free hypergraphs containing no $K^3_5$}
\label{theorem2}
Let $H$ be a $3$-uniform hypergraph without linear cycles.

\begin{claim}
	\label{ass}
If $T$ is a linear tree and $v \in V(T)$ such that $v$ is strongly associated to a hyperedge $abc$ of $T$, then $v$ belongs to a hyperedge of $T$ neighboring (not disjoint to) $abc$. If $v \not \in V(T)$, and $v$ is strongly associated to $h_1, h_2 \in E(T)$ then $h_1$ and $h_2$ are neighboring hyperedges. 
\end{claim}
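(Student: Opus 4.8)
The plan is to prove both statements by contradiction: assuming the conclusion fails, I would exhibit a linear cycle in $H$, contradicting the standing hypothesis that $H$ is linear-cycle-free.

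For the first statement, suppose $v \in V(T)$ is strongly associated to $abc \in E(T)$ but lies in no hyperedge of $T$ meeting $abc$. Since $v \in V(T) \setminus \{a,b,c\}$ and $T$ is a connected linear tree (which is itself linear-cycle-free, directly from the definition), there is a linear path $P = v, g_1, u_1, g_2, \ldots, g_{m-1}, u_{m-1}, g_m = abc$ in $T$ ending in the hyperedge $abc$, with $v \in g_1$, $v \notin g_i$ for $i \ge 2$, $g_i \cap abc = \emptyset$ for $i \le m-1$, and entry vertex $u_{m-1} = g_{m-1}\cap g_m \in \{a,b,c\}$. The assumption that no hyperedge of $T$ through $v$ meets $abc$ forces $m \ge 3$ (for $m \le 2$ the hyperedge $g_1$ contains $v$ and meets $abc$). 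Now the key point: because $v$ is \emph{strongly} associated, at least two of the three potential supporting edges $vab,vbc,vca$ lie in $E(H)$, and every vertex of $abc$ lies in exactly two of the corresponding pairs, so there is a supporting edge $v u_{m-1} z \in E(H)$ with $z \in \{a,b,c\}\setminus\{u_{m-1}\}$. I claim $g_1, g_2, \ldots, g_{m-1}, v u_{m-1} z$ is a linear cycle of length $m \ge 3$: consecutive hyperedges of $P$ meet in a single vertex; $g_{m-1}\cap(v u_{m-1} z)=\{u_{m-1}\}$ since $v \notin g_{m-1}$ and $z\in abc$ is disjoint from $g_{m-1}$; $(v u_{m-1} z)\cap g_1=\{v\}$ since $g_1\cap abc=\emptyset$; and for $2\le i\le m-2$ the edge $v u_{m-1} z$ is disjoint from $g_i$ since $v\notin g_i$ and $u_{m-1},z\in abc$ are disjoint from $g_i$; the remaining (non-consecutive) disjointness is inherited from linearity of $P$. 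This contradiction proves the first statement.

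For the second statement, suppose $v \notin V(T)$ is strongly associated to distinct hyperedges $h_1,h_2 \in E(T)$ with $h_1\cap h_2=\emptyset$. As $T$ is a connected linear tree, take the unique linear path $h_1 = f_0, w_0, f_1, w_1, \ldots, w_{r-1}, f_r = h_2$ in $T$; since $h_1\cap h_2=\emptyset$ we have $r\ge 2$. By strong association (and the same observation that each vertex lies in two of the three pairs) there are supporting edges $v w_0 z_1 \in E(H)$ with $z_1\in h_1\setminus\{w_0\}$ and $v w_{r-1} z_2 \in E(H)$ with $z_2\in h_2\setminus\{w_{r-1}\}$. I claim $v w_0 z_1, f_1, f_2, \ldots, f_{r-1}, v w_{r-1} z_2$ is a linear cycle of length $r+1 \ge 3$ — note that we deliberately drop $f_0=h_1$ and $f_r=h_2$ from the cycle. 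The verifications mirror the first part: $(v w_0 z_1)\cap f_1=\{w_0\}$ because $v\notin V(T)$ and $z_1\in f_0$ meets $f_1$ only in $w_0\ne z_1$; symmetrically $f_{r-1}\cap(v w_{r-1} z_2)=\{w_{r-1}\}$; $(v w_0 z_1)\cap(v w_{r-1} z_2)=\{v\}$ since the other two vertices of each lie in $h_1$ resp.\ $h_2$, which are disjoint; and the remaining disjointness conditions follow from linearity of the path together with $h_1\cap h_2=\emptyset$. This again contradicts linear-cycle-freeness of $H$, so $h_1\cap h_2\ne\emptyset$.

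The main obstacle — and the reason the argument genuinely needs \emph{strong} association rather than mere association — is that the obvious choice of closing edge, a supporting edge $vxy$ with $\{x,y\}\subset abc$, meets $abc$ in two vertices, so it cannot simply be appended to a linear path that ends at the hyperedge $abc$. The resolution is to \emph{exclude} the hyperedge $abc$ (respectively $h_1,h_2$) from the cycle and instead use, among the at least two present supporting edges, one that passes through precisely the attaching vertex $u_{m-1}$ (respectively $w_0,w_{r-1}$) where the path meets that hyperedge; this is exactly what forces all consecutive intersections in the cycle to be single vertices. The small cases $m=3$ and $r=2$ should be checked, but they are handled uniformly by the formulas above.
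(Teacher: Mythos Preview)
Your proof is correct and follows the same approach as the paper: assume the conclusion fails, take the linear path in $T$ from $v$ to $abc$ (respectively joining $h_1$ and $h_2$), and close it into a linear cycle using a supporting hyperedge of $v$ chosen to pass through the attaching vertex. The paper's proof is a terse two-sentence version of exactly this argument; your write-up simply makes explicit the length bounds ($m\ge 3$, $r\ge 2$), the choice of supporting edge through the entry vertex afforded by \emph{strong} association, and the verification of the single-vertex intersections.
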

\begin{proof}
To prove the first statement of the claim, suppose that $v$ is not in a neighboring hyperedge of $abc$. Then, take the linear path (of length at least $2$) from $v$ to $abc$ in $T$ and the appropriate supporting hyperedge of $v$ in $abc$ to produce a linear cycle, a contradiction. To prove the second statement, suppose that $h_1$ and $h_2$ are not neighboring hyperedges. Then, take the linear path (of length at least $1$) in $T$ joining $h_1$ and $h_2$ and an appropriate supporting hyperedge of $v$ in $h_1$ and $h_2$ respectively to produce a linear cycle, a contradiction.
\end{proof}

\begin{definition} [thick pair, thick hyperedge]
For any two vertices, $a, b \in V(H)$, we call the pair $\{a, b\}$  ``thick" if there are at least two different hyperedges containing $\{a, b\}$. We call a hyperedge $abc$ ``thick" if all the pairs $\{a, b\}$, $\{b, c\}$ and $\{c, a\}$ are thick.
\end{definition}

\begin{lemma}
	\label{association}
If $abc \in E(H)$ is a thick hyperedge, then the set of vertices associated to it consists of one of the following
\begin{enumerate}
\item Two vertices that are strongly associated to $abc$ (and no vertices that are weakly associated to $abc$). 
\item One vertex that is strongly associated to $abc$ and vertices $w_1, w_2, \ldots, w_m$ such that each $w_i$ is weakly associated to $abc$ and $\abs{\cup_{i} s_{abc}(w_i)} = 1$. (It is possible that $m=0$, i.e., no such $w_i$ exists).
\end{enumerate}
\end{lemma}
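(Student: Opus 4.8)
The plan is to fix a thick hyperedge $abc$ and analyze the vertices associated to it, exploiting the thickness (every pair in $abc$ lies in a second hyperedge) to build linear cycles whenever the association pattern is too rich. First I would observe that, since $\{a,b\},\{b,c\},\{c,a\}$ are all thick, there exist hyperedges $e_{ab}, e_{bc}, e_{ca}$ distinct from $abc$ with $\{a,b\}\subset e_{ab}$, etc. The key mechanism is: if a vertex $v$ is strongly associated to $abc$, then two of $vab, vbc, vca$ lie in $E(H)$, so $v$ together with $abc$ and the appropriate second hyperedge through the ``unused'' pair (or through one of the used pairs) threatens to close a linear cycle of length $3$. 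More precisely, if $v$ is strongly associated via, say, $vab$ and $vbc$, then $v, vab, a$ — wait, I need the third hyperedge to avoid $v$; using thickness of $\{c,a\}$ gives a hyperedge $e_{ca}\ni a,c$ with $e_{ca}\neq abc$, and then $v\,vab\,b\,vbc\,c\,e_{ca}$ is essentially a linear triangle unless $v\in e_{ca}$ or $e_{ca}$ meets the configuration badly. So the real work is a careful case analysis of how these forced extra hyperedges interact, and ruling out long linear cycles as well, not just triangles.

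The main structural steps I would carry out are as follows. Step 1: Show at most two vertices are strongly associated to $abc$. Suppose $u, v, w$ are all strongly associated. Each of them uses at least two of the three pairs of $abc$, so by pigeonhole two of them, say $u$ and $v$, share a pair — say both $uab, vab \in E(H)$. Then depending on a second supporting pair of each, one assembles a linear cycle through $a$ or $b$ using $u$ and $v$ as the ``far'' vertices and a pair-hyperedge of $abc$ closing it; I would enumerate the few subcases. Actually, even two strongly associated vertices already heavily constrain things, and with thickness I expect three to be impossible. Step 2: Show that if there are two strongly associated vertices, there is no weakly associated vertex — if $w$ is weakly associated via $wxy$ with $\{x,y\}\subset\{a,b,c\}$, combine $w$, the supporting hyperedge $wxy$, the hyperedge $abc$, one of the two strongly associated vertices, and a thick-pair hyperedge to close a cycle. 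Step 3: In the remaining case (at most one strongly associated vertex, call it $u$, or none), show all the weakly associated vertices $w_i$ have supports contained in a single pair, i.e.\ $\abs{\bigcup_i s_{abc}(w_i)}=1$. If $w_1$ supports on $\{a,b\}$ and $w_2$ supports on $\{b,c\}$, then $w_1\,w_1ab\,a$ ... use thickness of $\{c,a\}$: the hyperedge $w_1ab$, then $abc$, then $w_2bc$, then the extra hyperedge $e_{ca}$ through $c,a$ gives a linear $4$-cycle $w_1ab, abc, w_2bc, e_{ca}$ — again I would check $e_{ca}$ can be chosen disjoint enough from $w_1, w_2$ (if not, that itself produces a shorter cycle or a contradiction with $w_i$ being only weakly associated). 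Also handle the interaction with $u$ in this case: if $u$ is strongly associated, by Claim \ref{ass}-type reasoning its support overlaps the common pair of the $w_i$'s.

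The hard part will be Step 3 and the bookkeeping of degenerate overlaps throughout: the forced second hyperedges $e_{ab}, e_{bc}, e_{ca}$ guaranteed by thickness may themselves pass through $u$, through some $w_i$, or through each other, and in each such degenerate configuration I must either still extract a linear cycle or derive that the vertex in question was not merely weakly/strongly associated in the claimed way. I expect the clean cases to be short, but to make the proof airtight I would set up a small number of ``gadget'' lemmas of the form ``vertices $x, y$ and a path between two specified pair-hyperedges of $abc$ yield a linear cycle unless [explicit degeneracy],'' and then the proof of Lemma \ref{association} becomes a finite check over which degeneracies can coexist. A secondary subtlety is that the definition only forbids linear cycles, so I must always verify the cycle I build has distinct hyperedges and the correct intersection pattern — in particular that consecutive hyperedges meet in exactly one vertex and non-consecutive ones are disjoint — which is precisely where the thickness hypothesis (supplying a genuinely new hyperedge on each pair) is used.
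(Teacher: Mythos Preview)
Your overall strategy---split into (i) at most two strongly associated, (ii) two strong implies no weak, (iii) one-or-fewer strong forces all weak supports onto a single pair---is the right shape, but there are two genuine gaps.

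First, you never address existence: both alternatives in the lemma's conclusion require \emph{at least one} strongly associated vertex, and your Steps 1--3 only establish upper bounds (you even write ``or none'' in Step~3). The paper handles this first: if no vertex were strongly associated, then by thickness each of the three pairs $\{a,b\},\{b,c\},\{c,a\}$ lies in a hyperedge $v_1ab,\,v_2bc,\,v_3ca$ with $v_i\notin\{a,b,c\}$, and since none of the $v_i$ is strongly associated these three vertices are pairwise distinct---so $v_1ab,\,v_2bc,\,v_3ca$ is a linear triangle, a contradiction.

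Second, and more seriously, your proposed cycles are not linear cycles. In the Step~3 sequence $w_1ab,\,abc,\,w_2bc,\,e_{ca}$, the first two hyperedges meet in $\{a,b\}$ and the middle two in $\{b,c\}$---two vertices each, not one. The same defect appears in your warm-up ``$vab,\,vbc,\,e_{ca}$'' triangle, where $vab\cap vbc=\{v,b\}$. The fix in Step~3 is simply to drop $abc$: the three hyperedges $w_1ab,\,w_2bc,\,e_{ca}$ already form a linear triangle, provided the third vertex of $e_{ca}$ is neither $w_1$ nor $w_2$---and that is automatic because $w_1,w_2$ are only \emph{weakly} associated, so $w_1ac,w_2ac\notin E(H)$. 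This is exactly the paper's argument. For Steps~1--2 the paper merges them: if $p,q$ are strongly associated and any third associated vertex exists, thickness lets one choose $r$ with $s_{abc}(p)\cup s_{abc}(q)\cup s_{abc}(r)$ covering all three pairs, and then Hall's theorem on the bipartite graph between $\{p,q,r\}$ and $\{\{a,b\},\{b,c\},\{c,a\}\}$ yields a perfect matching, i.e.\ a linear triangle. This sidesteps the pigeonhole case analysis and the degenerate-overlap bookkeeping you anticipate.
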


\begin{proof}
	
If there is no vertex strongly associated to $abc$, then since $abc$ is thick, we must have $3$ distinct vertices $v_1, v_2, v_3$ such that $v_1ab, v_2bc, v_3ca \in E(H)$, a linear cycle, a contradiction. So there must be a vertex strongly associated to $abc$.

Now we show that if there are two vertices $p, q$ strongly associated to a hyperedge $abc \in E(H)$, then there are no other vertices associated to $abc$. Suppose by contradiction that there are such vertices. Then, among these vertices there is a vertex $r$ such that $\abs{s_{abc}(p) \cup s_{abc}(q) \cup s_{abc}(r)} = 3$ since $abc$ is thick.
Now consider the bipartite graph whose two color classes are $\{p,q,r\}$ and $\{\{a,b\}, \{b,c\}, \{c,a\}\}$ where $v \in \{p,q,r\}$ is connected to $\{x,y\} \in \{\{a,b\}, \{b,c\}, \{c,a\}\}$ if $vxy \in E(H)$. It can be easily checked that Hall's condition holds for the color class $\{p,q,r\}$ and so there exists a matching between the two color classes, but this corresponds to a linear cycle (of size $3$) in $H$, a contradiction.

So the only remaining possibility is that $abc$ has exactly one vertex which is strongly associated to it and maybe some other vertices $w_1, w_2, \ldots, w_m$ that are weakly associated to it. We only have to show that $\abs{\cup_{i} s_{abc}(w_i)} = 1$. Suppose by contradiction that there are vertices $w_i$ and $w_j$ such that their supports in $abc$ are different. Let $s_{abc}(w_i) = \{\{a,b\}\}$ and $s_{abc}(w_j) = \{\{b,c\}\}$ without loss of generality. Then, since $abc$ is thick, there is a vertex $v$ such that $v \not = w_i$, $v \not = w_j$ and $acv \in E(H)$. Now, $acv$, $abw_i$, $bcw_j$ is a linear cycle, a contradiction.
\end{proof}

%

%
Given a set of vertices $S \subseteq V(H)$, the subhypergraph of $H$ induced by $S$ is defined as a hypergraph whose vertex set is $S$ and edge set is $\{e \in E(H) \mid e \subseteq S\}$.

\begin{lemma}[Main Lemma]
\label{skeleton_coloring}
Let $T$ be a linear tree in $H$. Then there exists a $2$-coloring $\gamma: V(T) \mapsto \{1,2\}$, such that the following properties hold:

\begin{enumerate}
	\item The subhypergraph induced by $V(T)$ is properly $2$-colored.
	
	\item For each vertex $v \in V(H) \setminus V(T)$ that is strongly associated to some hyperedge of $T$, $v$ can be colored (by color $1$ or $2$) so that all hyperedges $vab$ with $a, b \in V(T)$ are properly $2$-colored.
	
	\item For each remaining vertex $v \in V(H) \setminus V(T)$, all the hyperedges $vab$ with $a, b \in V(T)$ satisfy the property $\gamma(a) \not = \gamma(b)$ (i.e., these hyperedges $vab$ are properly $2$-colored regardless of how we fix the color of $v$ later).
\end{enumerate}

\end{lemma}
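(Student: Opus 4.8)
The plan is to prove Lemma~\ref{skeleton_coloring} by induction on the number of hyperedges of $T$, adding the hyperedges in the order given by the definition of a linear tree. The workhorse is the following structural fact, a direct consequence of linear-cycle-freeness: \emph{if $\{w,p,q\}\in E(H)$ with $p,q\in V(T)$ and $w\notin V(T)$, then $\{p,q\}$ is contained in a hyperedge of $T$}; otherwise the linear path of $T$ joining $p$ and $q$ has length at least $2$, and together with $\{w,p,q\}$ it forms a linear cycle. (Such a hyperedge is unique, since any two hyperedges of a linear tree meet in at most one vertex.) Hence every such $w$ is associated to a hyperedge of $T$ via the pair $\{p,q\}$, so all the constraints coming from vertices outside $T$ reduce to constraints about vertices associated to individual hyperedges, where Lemma~\ref{association} and Claim~\ref{ass} apply. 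We also use the special case: if $x$ lies in only one hyperedge $e$ of $T$, then every pair $\{x,a\}$ with $a\in V(T)$ that sits inside some hyperedge of $T$ sits inside $e$.

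\textbf{Base case} ($\abs{E(T)}\le 1$). If $T$ is a single vertex there is nothing to prove, so let $T=\{abc\}$. The three ``weak support classes'' $W_{ab},W_{bc},W_{ca}$ (outside vertices whose support in $abc$ is exactly the corresponding pair) are not all nonempty, since choosing one vertex from each gives a linear triangle. Take the proper $2$-coloring $\gamma$ of $\{a,b,c\}$ whose unique monochromatic pair is one of the empty classes. Property~1 is immediate; property~3 holds because the support pair of each weakly associated outside vertex is one of the two bichromatic pairs; and property~2 holds because a strongly associated vertex $p$, whose support in $abc$ contains at most one monochromatic pair, can be given the minority color of $abc$.

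\textbf{Inductive step.} Write $T=T_0+h$ with $h=\{v,x,y\}$, $v\in V(T_0)$, $x,y\notin V(T_0)$, and let $\gamma_0$ be a coloring of $V(T_0)$ furnished by the inductive hypothesis; we extend it by choosing $\gamma(x),\gamma(y)$. By the inductive hypothesis each of $x,y$ carries a nonempty set $\mathrm{Good}(\cdot)\subseteq\{1,2\}$ of colors keeping proper all hyperedges through it into $V(T_0)$ (equal to $\{1,2\}$ if the vertex is not strongly associated to any hyperedge of $T_0$). By the leaf consequence above, the only new monochromatic pair of $V(T)$ that could become a bad weak support for an outside vertex is the monochromatic pair inside $h$; so, exactly as in the base case, the three weak classes of $h$ are not all nonempty, and it suffices to place $h$'s monochromatic pair on an empty weak class while keeping $\gamma(x)\in\mathrm{Good}(x)$, $\gamma(y)\in\mathrm{Good}(y)$ and every hyperedge $\{x,y,a\}$ ($a\in V(T_0)$) proper. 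When both $\mathrm{Good}$-sets are $\{1,2\}$ this is immediate; otherwise the structural fact pins a singleton $\mathrm{Good}$-set to the majority color of a hyperedge of $T_0$ to which the vertex is associated, and one verifies that any remaining conflict among these requirements can be converted into a linear cycle, with Claim~\ref{ass} and Lemma~\ref{association} controlling how the implicated hyperedges of $T_0$ lie relative to each other and to $h$. Finally one re-establishes properties~2 and~3 for $w\notin V(T)$: $w$ keeps a valid color with respect to $V(T_0)$, and the new hyperedges $\{w,x,a\},\{w,x,y\},\{w,y,a\}$ (whose relevant $V(T)$-pairs all lie in $h$, by the structural fact) cannot force $w$ towards both colors without producing a linear cycle.

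\textbf{Main obstacle.} The difficulty is concentrated in the inductive step, specifically in the family of ``no double forcing'' claims: that none of the new vertices $x,y$ and no outside vertex $w$ is ever pinned to two conflicting colors at once. Each such claim is a short linear-cycle argument, but there are several configurations to distinguish — according to whether the implicated hyperedges of $T_0$ coincide, are neighbors, or are disjoint, and whether they pass through the attachment vertex $v$ — and assembling them uniformly out of Claim~\ref{ass}, Lemma~\ref{association}, and the structural fact above is the delicate part of the proof.
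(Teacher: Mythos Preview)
Your approach is genuinely different from the paper's. The paper does \emph{not} build the coloring by adding one hyperedge at a time; instead it first identifies and strips off certain rigid five-vertex configurations called \emph{special blocks} (pairs $h_1=abc$, $h_2=bde$ of adjacent tree hyperedges such that no hyperedge of $H$ meets $h_1\cup h_2$ in exactly two vertices), reducing to linear trees with no special block. For those, it constructs a global auxiliary graph $G_T$ on $V(T)$, proves $G_T$ is a tree, and takes the $2$-coloring of $G_T$.

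Your inductive scheme has a real gap, and it is precisely the phenomenon that the special-block decomposition is designed to handle. Your inductive hypothesis hands you \emph{one} coloring $\gamma_0$ of $V(T_0)$ and you propose to extend it to $x,y$; but a valid $\gamma_0$ need not extend at all. Take $H=K_5^3\setminus\{ace\}$ on $\{a,b,c,d,e\}$ (linear-cycle-free, since a linear triangle already needs six vertices, and $K_5^3$-free), and let $T$ be the linear tree with hyperedges $abc$ and $bde$. Processing $T_0=\{abc\}$ first, both $d$ and $e$ are \emph{strongly} associated to $abc$, so all three weak classes $W_{ab},W_{bc},W_{ca}$ are empty and your base-case rule permits any pair to be monochromatic. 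Choosing $\gamma_0(a)=\gamma_0(b)=1$, $\gamma_0(c)=2$ gives a perfectly legitimate output of the base case: properties~1--3 hold for $T_0$. Now add $h=bde$. The hyperedge $abd$ forces $d=2$ and $abe$ forces $e=2$ (so $\mathrm{Good}(d)=\mathrm{Good}(e)=\{2\}$), and then $cde$ is monochromatic, so property~1 fails for $T$. There is no linear cycle to exhibit; the obstruction is simply that the unique proper $2$-coloring of $H$ puts $\{a,c,e\}$ in one class and $\{b,d\}$ in the other, and $\gamma_0$ already separated $a$ from $c$. Thus the sentence ``one verifies that any remaining conflict among these requirements can be converted into a linear cycle'' is not merely unverified --- it is false.

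The paper handles exactly this example by recognising that $\{a,b,c,d,e\}$ is a special block (Claim~\ref{two_arrows}: two vertices of a thick $h_2$ strongly associated to a thick $h_1$ force a special block), $2$-coloring all five block vertices at once (possible since the block is not $K_5^3$), and then applying the lemma independently to the subtrees hanging off $a,b,c,d,e$ with those boundary colors fixed (Claim~\ref{removeblock}). Only after all special blocks are removed does the paper attempt a direct construction, and even then it is global (the auxiliary tree $G_T$), not incremental. If you want to salvage an inductive proof you would need a much stronger hypothesis --- essentially a description of \emph{all} colorings of $T_0$ satisfying 1--3, not just one --- and organising that is what the special-block reduction together with the $G_T$ construction accomplishes.
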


Before we prove this lemma, we will show how to prove Theorem \ref{2coloring} using it.

\begin{observation}
	\label{observation}
Let $w \in V(T)$. Notice that the above lemma holds even if we add the extra condition that the color of $w$ is given.
\end{observation}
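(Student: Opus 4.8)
The plan is to build the $2$-coloring of $V(T)$ by induction on the number of hyperedges of the linear tree $T$, following the same order in which $T$ was assembled from a single vertex. The base case $T = \{v\}$ is trivial: color $v$ arbitrarily (or by the prescribed color, as in Observation~\ref{observation}). For the inductive step, write $T = T' + e$ where $e$ is the last hyperedge added, so $e \cap V(T') = \{x\}$ for a single vertex $x$, and $e = \{x, y, z\}$ with $y, z \notin V(T')$. By induction $T'$ has a coloring $\gamma'$ satisfying properties (1)--(3) (with the color of $x$ regarded as prescribed, so that we may later re-invoke the inductive hypothesis with $x$'s color fixed). We extend $\gamma'$ to $\gamma$ by assigning colors to the two new vertices $y$ and $z$; the whole difficulty is to choose $\gamma(y)$ and $\gamma(z)$ so that all three new obligations survive: (1) no hyperedge inside $V(T)$ becomes monochromatic, (2) every vertex strongly associated to a hyperedge of $T$ can still be legally colored, and (3) every other vertex $v$ has $\gamma(a) \neq \gamma(b)$ on every hyperedge $vab$ with $a,b \in V(T)$.

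The key structural input is Claim~\ref{ass} together with Lemma~\ref{association}. Claim~\ref{ass} tells us that a vertex $v$ strongly associated to a hyperedge $h$ of $T$ must lie in a hyperedge of $T$ neighboring $h$ (if $v \in V(T)$) or, if $v \notin V(T)$, that the two hyperedges of $T$ it is strongly associated to must be neighbors; this severely limits how the "strong association" constraints interact with the newly added edge $e$. First I would enumerate which hyperedges of $T$ actually contain both of $y$ and $z$, or contain one of them together with a vertex of $T'$: since $T$ is a linear tree and $e$ was just added as a pendant edge at $x$, the only hyperedge of $T$ containing $y$ or $z$ is $e$ itself, and the only vertex of $V(T')$ in $e$ is $x$. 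So the new constraints of type (1) amount to: $e = \{x,y,z\}$ is not monochromatic. The new constraints of types (2) and (3) concern vertices $v \in V(H)\setminus V(T)$ sitting on hyperedges $vab$ with $\{a,b\} \cap \{y,z\} \neq \emptyset$; I would split these into the cases $\{a,b\} = \{y,z\}$, $\{a,b\} = \{x,y\}$ or $\{x,z\}$, and $\{a,b\} = \{y, t\}$ or $\{z,t\}$ with $t \in V(T')\setminus\{x\}$. The last case is impossible: $vyt$ with $t \in V(T')\setminus\{x\}$ together with the linear path in $T$ from $t$ to $e$ (which avoids $y$) would create a linear cycle unless that path has length zero, i.e. $t = x$. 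So in fact all new external hyperedges lie inside $\{x,y,z\} \cup \{v\}$, and the vertices $v$ to worry about are exactly those associated (weakly or strongly) to $e = xyz$ — to which Lemma~\ref{association} applies directly, since one checks $e$ is either thick or has at most a bounded, easily handled association structure.

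Given this, the coloring choice becomes a small finite case analysis driven by Lemma~\ref{association}. If the association structure of $e$ is of the first type — two vertices $p, q$ strongly associated, no weakly associated ones — then $s_{xyz}(p)$ and $s_{xyz}(q)$ are two of the three pairs of $e$; I would pick $\gamma(y), \gamma(z)$ so that $e$ is non-monochromatic and so that the one pair of $e$ common to a supporting edge of $p$ and the one common to $q$ are each bichromatic, allowing $p$ and $q$ to be colored — here I must also respect $x$'s already-fixed color, and check consistency with whatever constraints $p$ or $q$ already inherited from $T'$, which is where Claim~\ref{ass} (forcing $p,q$ to attach near $e$) keeps the situation local. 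If the structure is of the second type — one strongly associated vertex and weakly associated $w_1,\dots,w_m$ all sharing a single support pair $\{a,b\}$ — then I would color so that $\{a,b\}$ is bichromatic (handling every $w_i$ at once, per property (3)) and so that the strongly associated vertex's support pairs contain a bichromatic pair, while still keeping $e$ non-monochromatic. In each branch there are at most a couple of admissible assignments of $\{\gamma(y),\gamma(z)\}$ consistent with $\gamma(x)$ and $e$ being bichromatic, and the lemma's constraints pin down which one to use; if none of type (2)/(3) constraints are present, any bichromatic completion of $e$ works.

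I expect the main obstacle to be verifying property (2) coherently: a vertex $v$ strongly associated to $e$ may also be strongly associated to hyperedges of $T'$, so the color we are now "allowed" to give $v$ must simultaneously satisfy the old requirement (from the inductive hypothesis applied to $T'$) and the new one (from $e$). Reconciling these is exactly where I need Claim~\ref{ass}: it forces any such $v$ to live in a hyperedge of $T$ neighboring both the relevant hyperedges, which — since $e$ is pendant at $x$ — pins $v$ into the neighborhood of $x$, making the two constraints refer to overlapping pairs and hence compatible. The bookkeeping to make this rigorous (especially ruling out that some far-away vertex of $T$ is strongly associated to $e$, which would violate the linear-cycle-free hypothesis) is the delicate part; the rest is routine finite checking. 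Finally, Observation~\ref{observation} is immediate from the induction, since the base case already allows prescribing the color of the starting vertex and that vertex can be taken to be any prescribed $w \in V(T)$ by choosing the build order of $T$ to start at $w$.
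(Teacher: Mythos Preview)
You are doing far more work than the observation requires. Observation~\ref{observation} takes Lemma~\ref{skeleton_coloring} as already proved and merely notes that the prescribed-color version follows. The one-line argument the paper intends is symmetry: apply the lemma to get a valid $2$-coloring $\gamma$; if $\gamma(w)$ disagrees with the prescribed color, replace $\gamma$ by $3-\gamma$. All three properties of the lemma are invariant under the swap $1\leftrightarrow 2$, so the swapped coloring still satisfies them and now colors $w$ correctly.

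What you have sketched is instead an alternative proof of the Main Lemma itself, by induction on the number of hyperedges of $T$, from which the observation would be a byproduct. This is a genuinely different route from the paper: the paper first reduces to trees with no special block (Claim~\ref{removeblock}), then builds a global auxiliary graph $G_T$ on $V(T)$ and proves it is a tree (Claim~\ref{tree_lemma}), so that the unique-up-to-swap bipartition of $G_T$ supplies the coloring. Your edge-by-edge induction avoids $G_T$ altogether.

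Your sketch, however, has a real gap in property~(1). You assert that ``the new constraints of type~(1) amount to: $e=\{x,y,z\}$ is not monochromatic,'' but property~(1) concerns the subhypergraph of $H$ induced on $V(T)$, not just tree edges. In particular you must handle hyperedges $tyz\in E(H)$ with $t\in V(T')\setminus\{x\}$. Your linear-cycle argument for $vyt$ (with $v\notin V(T)$) does not rule these out: $tyz$ and $e=xyz$ share two vertices, so they cannot sit consecutively in a linear cycle, and the path from $t$ to $x$ in $T'$ followed by $e$ closes against $tyz$ in a set of size~$2$, not~$1$. Likewise, hyperedges $yab$ with $a,b\in V(T')$ are governed by the inductive hypothesis on $T'$ (via property~(2) or~(3) applied to $y$ as an outside vertex), which may force a specific color on $y$ that you never reconcile with the choices dictated by the association structure of $e$. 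You also invoke Lemma~\ref{association} without verifying that $e$ is thick. These interactions are precisely why the paper's proof introduces special blocks and the global graph $G_T$ rather than extending one pendant edge at a time.
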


Now we prove our main theorem using this lemma.

\begin{proof}[Proof of Theorem \ref{2coloring}]
Let $T_1$ be any skeleton of $H$.  Then there exists a $2$-coloring of $T_1$ given by Lemma \ref{skeleton_coloring}. Let $U_1 \subseteq V(H)  \setminus V(T_1)$ be the set of all vertices such that each $u \in U_1$ is strongly associated to some hyperedge of $T_1$. If $\abs{U_1} = 0$, then by Lemma \ref{skeleton_coloring} all the vertices of $V(H) \setminus V(T_1)$ can be $2$-colored arbitrarily such that the hyperedges $vab$ with $a, b \in V(T_1)$ are properly $2$-colored. Also, since $T_1$ is a skeleton, there are no hyperedges $vxy$ where $v \in V(T_1)$ and $x, y \in V(H) \setminus V(T_1)$. Therefore, the vertices of $V(H) \setminus V(T_1)$ can be $2$-colored independently from vertices of $V(T_1)$ and so we have the same problem for the subhypergraph induced by $V(H) \setminus V(T_1)$. So we can assume that $\abs{U_1} \not = 0$. Now let us define a sequence of linear trees $T_1, T_2, \ldots, T_i, T_{i+1}, \ldots, T_m$ recursively as follows: Let $U_{i} \subseteq V(H)  \setminus \cup_{j=1}^i V(T_j)$ be the set of vertices where each $u \in U_{i}$ is strongly associated to some hyperedge of $\cup_{j=1}^i T_j$ and let $T_{i+1}$ be a skeleton in the subhypergraph induced by $V(H)  \setminus \cup_{j=1}^i V(T_j)$ so that $T_{i+1}$ contains at least one vertex from $U_{i}$ (we continue this procedure as long as $\abs{U_i} \not = 0$; so $\abs{U_m} = 0$). Notice that $T_{i+1}$ might consist of just one vertex. In fact, we will show that $\abs{V(T_{i+1}) \cap U_i} = 1$. Let $H_i$ denote the subhypergraph of $H$ induced by $\cup_{j=1}^i V(T_j)$.

\begin{claim}
\label{TandU}
For each $1 \le i \le m-1$, there is a linear path in $H_i$ between any two vertices $u, v \in V(H_i)$. Moreover, $V(T_{i+1}) \cap U_i$ consists of only one vertex and this vertex can be strongly associated to hyperedge(s) of $T_s$ for exactly one $s$, $1 \le s \le i$.
\end{claim}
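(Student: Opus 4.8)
The plan is to prove the three assertions of Claim~\ref{TandU} simultaneously by induction on $i$, in the form: for $1 \le i \le m-1$, (a) $H_i$ is \emph{linearly connected}, i.e.\ there is a linear path in $H_i$ between any two of its vertices; (b) $|V(T_{i+1}) \cap U_i| = 1$; and (c) the unique vertex $u^*$ of $V(T_{i+1}) \cap U_i$ is strongly associated to hyperedges of $T_s$ for exactly one index $s \le i$. For the base case $i=1$ we have $H_1 = T_1$, and a routine induction on the construction of the linear tree $T_1$ gives (a): when a new hyperedge $\{z,p,q\}$ is appended to the current subtree at the vertex $z$, any linear path reaching $z$ (and containing $z$ only in its last hyperedge, as linear paths do) extends across $\{z,p,q\}$, while $p$ and $q$ are joined directly by that hyperedge; assertion (c) for $i=1$ is vacuous. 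In the inductive step I would, assuming (a)--(c) for $i-1$, first establish (a) for $i$ and then deduce (b) and (c) for $i$ from (a) for $i$.

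To obtain (b) and (c) from (a) at step $i$, suppose one of them fails. Then either there are distinct $w_1, w_2 \in V(T_{i+1})$ each strongly associated to some hyperedge of $H_i$, say $w_j$ to $h_j$, or a single $w = w_1 = w_2 \in V(T_{i+1})$ is strongly associated to hyperedges $h_1 \in E(T_{s_1})$ and $h_2 \in E(T_{s_2})$ with $s_1 \ne s_2$. In all cases $|h_1 \cap h_2| \le 1$ (disjoint trees give $0$; a common tree is linear, giving at most $1$; $h_1 = h_2$ is a degenerate sub-case). I would then assemble a linear cycle from three pieces: a linear path $Q$ in $T_{i+1}$ between $w_1$ and $w_2$ (it exists since $T_{i+1}$ is a linear tree, and is trivial if $w_1 = w_2$); a shortest linear path $R$ in $H_i$ from $V(h_1)$ to $V(h_2)$ (it exists by (a) for $i$, collapses to the common vertex when $h_1 \cap h_2 \ne \emptyset$, and by minimality its end hyperedges are neither $h_1$ nor $h_2$, hence meet each of $h_1,h_2$ in at most two vertices); and, at the two junctions, a supporting hyperedge of $w_1$ in $h_1$ and of $w_2$ in $h_2$. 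The crux is that ``strongly associated'' supplies at least two of the three supporting hyperedges, and the relevant endpoint of $R$ lies in two of the three vertex-pairs of $h_1$; hence a supporting hyperedge of $w_1$ can be chosen meeting the first hyperedge of $R$ in exactly one vertex, and symmetrically for $w_2$. Since $V(T_{i+1})$ is disjoint from $V(H_i)$, the pieces $Q$ and $R$ are vertex-disjoint and the two supporting hyperedges overlap $Q$ and $R$ only in the designated junction vertices, so the linear-cycle conditions hold — contradicting that $H$ has no linear cycle.

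For (a) at step $i$ (with $i \ge 2$), let $u^*$ be the unique vertex of $V(T_i) \cap U_{i-1}$ furnished by (b) at step $i-1$, strongly associated to a hyperedge $h^*$ of $H_{i-1}$. Given $x, y \in V(H_i) = V(H_{i-1}) \cup V(T_i)$: if both lie in $V(H_{i-1})$, apply (a) for $i-1$; if both lie in $V(T_i)$, use that $T_i$ is a linear tree; if $x \in V(H_{i-1})$ and $y \in V(T_i)$, concatenate a linear path from $y$ to $u^*$ inside $T_i$, a supporting hyperedge of $u^*$ in $h^*$, and a shortest linear path in $H_{i-1}$ from $V(h^*)$ to $x$, once more choosing the supporting hyperedge so that it meets the first hyperedge of the $H_{i-1}$-path in exactly one vertex (possible by the same ``at least two supporting pairs'' observation). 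The three stretches live respectively in $V(T_i)$, in a single crossing hyperedge, and in $V(H_{i-1})$, so they overlap only at the prescribed junctions and the concatenation is a genuine linear path.

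The main obstacle I anticipate is precisely this gluing bookkeeping: a supporting hyperedge $wxy$ shares the two vertices $x,y$ with the hyperedge $xyz$ it supports, so it cannot naively be placed next to that hyperedge in a linear path or cycle. The way around it is the combination already highlighted — exploiting that strong association leaves a free choice among at least two supporting hyperedges, and using shortest paths so that the internal hyperedges of $R$ miss $h_1$ and $h_2$ entirely while the terminal ones meet them in few vertices — which always permits a choice of supporting hyperedge meeting its neighbouring path-hyperedge in a single vertex. With that device in hand, the remaining verifications (distinctness of the hyperedges in each constructed cycle, emptiness of non-consecutive intersections, and that each cycle has length at least $3$) are straightforward case checks.
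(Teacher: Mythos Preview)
Your proposal is correct and follows essentially the same approach as the paper: induction on $i$, first establishing linear connectivity of $H_i$ via the bridge vertex $u^*$ and its supporting hyperedge, then deriving (b) and (c) by assembling a linear cycle from a path in $T_{i+1}$, a path in $H_i$ between $h_1$ and $h_2$, and supporting hyperedges at the two junctions. The paper's proof is organized identically, merely invoking ``appropriate supporting hyperedges'' where you spell out the shortest-path and two-choices-available argument; your extra care with the gluing bookkeeping is justified but does not constitute a different method.
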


\begin{proof}[Proof of Claim \ref{TandU}]
We prove the claim by induction on $i$. For $i = 1$, the statement is trivial. Assume the statement is true for $i = k$.  First we will show that there is a linear path between $u \in V(T_{k+1}) \cap U_k$ and any $v \in V(H_k)$. Let $abc \in E(T_s)$ (for some $1 \le s \le k$) be the hyperedge in $\cup_{j=1}^k T_j$ such that $u$ is strongly associated to $abc$. Consider a linear path $\P_1$ in $H_k$ between $v$ and $\{a,b,c\}$ (in case, $v \in \{a,b,c\}$, $\P_1$ consists of just $v$).
By adding an appropriate supporting hyperedge of $u$ in $abc$, $\P_1$ is extended to a linear path between $u$ and $v$.
 Notice that this path contains only one vertex from $T_{k+1}$. Since there is a linear path between every $2$ vertices of $T_{k+1}$ we have a linear path between any vertex of $T_{k+1}$ and any vertex of $H_k$. By the induction hypothesis there is a linear path between any two vertices of $H_k$ and so we have proved the first part of the claim.

Now assume by contradiction that there is a vertex $u' \not = u$, $u' \in V(T_{k+1}) \cap U_k$ which is strongly associated to a hyperedge $pqr \in \cup_{j=1}^k E(T_j)$. Take a linear path $\P_2$ in $H_k$ between $\{a,b,c\}$ and $\{p,q,r\}$. Extend $\P_2$ on both ends by appropriate supporting hyperedges of $u$ in $abc$ and $u'$ in $pqr$ respectively. Then this path together with the linear path  in $T_{k+1}$ between $u$ and $u'$ is a linear cycle, a contradiction.


So $V(T_{k+1}) \cap U_k$ consists of only vertex, say $u$. If $u$ is strongly associated to two hyperedges $h_1 \in T_r$ and $h_2 \in T_s$ (where $r \not= s$ and $r, s \le k$), then take a linear path $\P$ in $H_k$ between $h_1$ and $h_2$ and extend it by appropriate supporting hyperedges of $u$ in $h_1$ and $h_2$ to a linear cycle, a contradiction. 
\end{proof}

We will show that for each $1 \le k \le m$, $H_k$ is properly $2$-colored such that each $T_i$, $i \le k$ is $2$-colored according to Lemma \ref{skeleton_coloring}. For $k =1$ the above statement is trivially true. Let us assume that the statement is true for $k$ and show that it is true for $k+1$.

By the above claim $V(T_{k+1}) \cap U_k$ consists of only one vertex $u$ and this vertex is strongly associated to hyperedge(s) of $T_s$ for exactly one $1 \le s \le k$. Also, it is easy to see that if $uab \in H_{k+1}$ and $a,b \in V(H_k)$ then $a,b \in V(T_i)$ for some $i \le k$. If $i = s$ and $a,b \in V(T_s)$, then we know by Lemma \ref{skeleton_coloring} that there exists a color for $u$, say $c$ such that hyperedges $uab$ are properly $2$-colored. Let us color $u$ by $c$. If $i \not = s$, and $a,b \in V(T_i)$ then regardless of the color of $u$ the hyperedges $uab$ are $2$-colored properly due to Lemma \ref{skeleton_coloring}.  Since the set of vertices that are strongly associated to hyperedges of $T_{k+1}$ is disjoint from $V(H_k)$ (the already $2$-colored part), we can apply Lemma \ref{skeleton_coloring} to color $T_{k+1}$ such that $u$ is still $2$-colored with $c$ by Observation \ref{observation}. Therefore, we have shown that $H_{k+1}$ is properly $2$-colored such that each $T_i$, $i \le k+1$ is $2$-colored according to Lemma \ref{skeleton_coloring}, as desired and so we have statement for $H_m$ by induction.

In the remaining vertices, namely $V(H) \setminus V(H_m)$, since there are no strongly associated vertices, by Lemma \ref{skeleton_coloring} they can be $2$-colored independently from $H_m$ and we now have a smaller vertex set: $V(H) \setminus V(H_m)$ to color. Therefore, by induction on number of vertices we may $2$-color $H$ properly.
\end{proof}

\subsection{Proof of Lemma \ref{skeleton_coloring} (Main Lemma)}
\label{mainlemma}


We identify some sets of vertices of size $5$ which play an important role in the forthcoming proof.

\begin{definition}
Let $h_1 =abc$, $h_2 =bde$ where $h_1, h_2 \in E(T)$. If there is no hyperedge $h \in H$ such that $\abs{h \cap (h_1 \cup h_2)} = 2$, then the set of vertices $\{a,b,c,d,e\}$ is called a \emph{special block} of $T$.
\end{definition}


\begin{claim}
	\label{matching}
Let $h_1 =abc$, $h_2 =bde$ be thick hyperedges of $T$. If $abe, cbd \in E(H)$ or $abd, cbe \in E(H)$, then $\{a,b,c,d,e\}$ is a special block.
\end{claim}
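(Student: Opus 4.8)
The plan is to argue by contradiction: suppose $\{a,b,c,d,e\}$ is \emph{not} a special block, so there exists a hyperedge $h \in E(H)$ with $\abs{h \cap (h_1 \cup h_2)} = 2$, and then combine $h$ with the hypothesized hyperedges to build a linear cycle, contradicting that $H$ is linear-cycle-free. Write $h_1 = abc$, $h_2 = bde$ with $b$ the shared vertex, and assume by symmetry we are in the case $abe, cbd \in E(H)$ (the case $abd, cbe \in E(H)$ is entirely analogous, obtained by swapping the labels $d \leftrightarrow e$). The vertex $h$ meets $h_1 \cup h_2 = \{a,b,c,d,e\}$ in exactly two vertices; let these be $\{x,y\} \subseteq \{a,b,c,d,e\}$, so $h = xyz$ with $z \notin \{a,b,c,d,e\}$. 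There are $\binom{5}{2} = 10$ possible pairs $\{x,y\}$, and I would organize the argument according to which pair occurs, noting that many cases are symmetric.

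The key observation is that the two hyperedges $abe$ and $cbd$ together already form a short linear path-like structure through the five vertices, and almost any extra hyperedge $xyz$ on two of these vertices closes a linear triangle or a linear $4$-cycle. Concretely: $h_1 = abc$, $h_2 = bde$, together with $abe$ and $cbd$, is a $4$-edge configuration on $\{a,b,c,d,e\}$. For instance, if $h = cdz$ (so $\{x,y\} = \{c,d\}$), then $z, h_1 = abc, \; c? $ — here I would instead pick the right three edges: e.g. $cdz$, together with a path avoiding $z$; since $z \notin \{a,b,c,d,e\}$, any three of our five ``internal'' edges that form a linear path between two of $\{c,d\}$'s ``partners'' close up. More carefully, I would exploit that among the four edges $\{abc, bde, abe, cbd\}$ one can route a linear path between any two of the vertices $a,c,d,e$ that stays inside $\{a,b,c,d,e\}$ and has the prescribed endpoints as its two ``free'' vertices: for the pair $\{c,d\}$ use $c\,b\,d$ itself (length $1$); for $\{a,e\}$ use $a\,b\,e$; for $\{a,d\}$ use $abe, \, bde$ wait that revisits $b$ — so instead use $a\,b\,?$; at this point one simply checks each of the $10$ pairs and in each produces two or three internal edges forming a linear path whose endpoints are exactly the attachment vertices of $h$, so that appending $h$ yields a linear cycle of length $3$ or $4$. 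Since $h_1, h_2$ are thick, whenever an internal edge we want is ``blocked'' we may substitute the second hyperedge through the relevant thick pair to keep all hyperedges distinct from $h$; this is what the thickness hypothesis is for.

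I expect the main obstacle to be bookkeeping: making sure that in each of the (up to symmetry, perhaps four or five essentially distinct) cases the chosen hyperedges are pairwise distinct — in particular distinct from $h = xyz$ — and that the resulting alternating sequence of vertices and hyperedges genuinely satisfies the disjointness conditions in the definition of a linear cycle (consecutive edges meet in exactly the named vertex, non-consecutive edges are disjoint). The thickness of $h_1$ and $h_2$ is precisely the tool that resolves the distinctness issue: if a needed hyperedge coincides with $h$, the relevant pair lies in a thick edge and hence is covered by another hyperedge we can use instead. Once the casework is set up, each case is a one-line verification that three or four specified hyperedges form a linear cycle, contradicting the hypothesis on $H$; hence no such $h$ exists and $\{a,b,c,d,e\}$ is a special block.
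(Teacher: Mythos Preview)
Your overall plan matches the paper's: assume there is some $h = xyz$ with $\{x,y\} \subset \{a,b,c,d,e\}$ and $z$ outside, then build a linear cycle. The paper organizes the casework cleanly by first observing that whenever $\{x,y\} \subseteq \{a,c,d,e\}$ (six of the ten pairs), the edge $xyz$ already forms a linear triangle with two of the four edges $h_1 = abc$, $h_2 = bde$, $abe$, $cbd$; this is essentially the part you sketched.

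The genuine gap in your proposal is the handling of the four pairs containing $b$, and tied to it, your diagnosis of why thickness matters. You say thickness is needed ``to keep all hyperedges distinct from $h$,'' but since $z \notin \{a,b,c,d,e\}$ the edge $h$ is automatically distinct from every edge contained in the block, so no substitution is ever required for that reason. The real obstruction is that when, say, $\{x,y\} = \{b,d\}$, \emph{no} combination of the four internal edges $abc, bde, abe, cbd$ can close a linear cycle with $bdz$: every one of these edges that contains $d$ also contains $b$, so any internal edge meeting $bdz$ in exactly one vertex meets it at $b$, and you cannot get two such edges with distinct intersection points. Thickness is used instead to manufacture a \emph{new} edge: since $\{d,e\}$ is thick and the case $\{x,y\} = \{d,e\}$ has already been ruled out, the second hyperedge through $\{d,e\}$ must be $dea$ or $dec$; taking $dec$, the three edges $xyz$ (with $\{x,y\}\in\{\{b,d\},\{b,e\}\}$), $dec$, and $abc$ form a linear triangle. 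The pairs $\{a,b\}$ and $\{c,b\}$ are symmetric, using thickness of $\{a,c\}$. Once you supply this step the proof is complete, but without it the $b$-cases do not go through.
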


 \begin{proof}[Proof of Claim \ref{matching}]
Suppose $xyz \in E(H)$ such that $\{x,y,z\} \cap \{a,b,c,d,e\} = \{x,y\}$. It is easy to see that if $x,y\in \{a,c,d,e\}$ then $xyz$ forms a linear triangle with either $h_1,h_2$ or with $abe, cbd$ or with $abd, cbe$. So the only cases that are left to be considered are $\{x,y\} = \{d,b\}$ or $\{x,y\} = \{e,b\}$. Since $\{d,e\}$ is a thick pair either $dea$ or $dec$ is a hyperedge in $H$.  W.l.o.g. let us say $dec \in E(H)$. Then in either of the two remaining cases, $xyz$ along with $abc$ and $dec$ will create a linear cycle, a contradiction.
 \end{proof}

 \begin{claim}
 	\label{two_arrows}
 Let $h_1, h_2$ be thick hyperedges of $T$. If there are two vertices of $h_2$ which are strongly associated to $h_1$, then $h_1 \cup h_2$ is a special block.
 \end{claim}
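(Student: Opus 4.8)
The plan is to pin down the position of $h_1$ and $h_2$ inside $T$ and then invoke Claim~\ref{matching}. Throughout, let $d$ and $e$ denote two vertices of $h_2$ that are strongly associated to $h_1$ (these exist by hypothesis); note $h_1\neq h_2$, since $d$ and $e$ lie in $h_2$ but not in $h_1$.

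\emph{Step 1: $h_1$ and $h_2$ meet.} Suppose $h_1\cap h_2=\emptyset$. As $d\in V(T)$ is strongly associated to the hyperedge $h_1\in E(T)$, Claim~\ref{ass} gives a hyperedge $h_d\in E(T)$ with $d\in h_d$ and $h_d\cap h_1\neq\emptyset$; since $h_2\cap h_1=\emptyset$ we have $h_d\neq h_2$, so in the linear tree $T$ we get $h_d\cap h_1=\{x\}$ and $h_d\cap h_2=\{d\}$ for some $x\in h_1$. Symmetrically there is $h_e\in E(T)$ with $h_e\neq h_2$, $h_e\cap h_1=\{y\}$ and $h_e\cap h_2=\{e\}$; moreover $h_d\neq h_e$, since otherwise that single hyperedge and $h_2$ would be two distinct hyperedges of $T$ both containing $\{d,e\}$. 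If $x\neq y$, then $h_d,h_1,h_e,h_2$ is a linear cycle of length $4$: its consecutive intersections are the pairwise distinct vertices $x,y,e,d$, while the two ``opposite'' pairs $\{h_d,h_e\}$ and $\{h_1,h_2\}$ are disjoint. If $x=y$, then $h_d\cap h_e=\{x\}$ and $h_d,h_e,h_2$ is a linear triangle. Either way we contradict the linear-cycle-freeness of $H$. Hence $|h_1\cap h_2|=1$, and we may relabel so that $h_1=abc$, $h_2=bde$ and $h_1\cap h_2=\{b\}$.

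\emph{Step 2: finish via Claim~\ref{matching}.} Since $b\in h_1$, the two vertices of $h_2$ strongly associated to $h_1$ are precisely $d$ and $e$. Applying Lemma~\ref{association} to the thick hyperedge $abc$, which has two vertices strongly associated to it, puts us in its case~1: $d$ and $e$ are the \emph{only} vertices associated to $abc$. Because $|s_{abc}(d)|\ge 2$, the support $s_{abc}(d)$ contains $\{a,b\}$ or $\{b,c\}$, and likewise for $s_{abc}(e)$. If one of $d,e$ has $\{b,c\}$ in its support while the other has $\{a,b\}$, then $abe,cbd\in E(H)$ or $abd,cbe\in E(H)$, so Claim~\ref{matching} gives that $h_1\cup h_2=\{a,b,c,d,e\}$ is a special block, as wanted. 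Otherwise a short check (using $|s_{abc}(d)|,|s_{abc}(e)|\ge 2$ and the symmetry $a\leftrightarrow c$) shows that one of $\{a,b\}$ and $\{b,c\}$ — say $\{b,c\}$ after relabelling — lies in neither $s_{abc}(d)$ nor $s_{abc}(e)$, i.e.\ $bcd,bce\notin E(H)$. Then $abc$ is the unique hyperedge on $\{a,b,c,d,e\}$ containing the pair $\{b,c\}$; but $abc$ is thick, so there is a hyperedge $bct$ with $t\notin\{a,b,c,d,e\}$, and this $t$ is (at least weakly) associated to $abc$, contradicting Step~2's conclusion that only $d$ and $e$ are. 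Hence only the first alternative is possible, which proves the claim.

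\emph{Main obstacle.} The real work is the last case of Step~2: the configurations in which $d$ and $e$ support the same pair(s) of $abc$ are not covered by Claim~\ref{matching}, and to rule them out one must play the thickness of $abc$ (which forces an extra hyperedge through one of its pairs) against the rigidity supplied by Lemma~\ref{association} (no third vertex can be associated to $abc$). Step~1 is, by comparison, a routine search for a short linear cycle once Claim~\ref{ass} has been applied.
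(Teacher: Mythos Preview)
Your overall strategy matches the paper's: first force $|h_1\cap h_2|=1$, then feed the resulting configuration to Claim~\ref{matching}. Step~2 is essentially identical to the paper's argument, with your direct case split replacing the paper's small Hall--matching check; the contradiction you reach in the ``otherwise'' case (a third vertex $t$ associated to $abc$, produced by the thickness of $\{b,c\}$) is exactly the mechanism behind the paper's assertion that $s_{abc}(d)\cup s_{abc}(e)$ must cover all three pairs of $abc$.

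Step~1, however, is more complicated than needed and contains a small unjustified assertion. You claim that when $x\neq y$ the hyperedges $h_d,h_1,h_e,h_2$ form a linear $4$-cycle, which requires $h_d\cap h_e=\emptyset$; but nothing you wrote rules out that the third vertices of $h_d$ and $h_e$ coincide. This is easy to repair --- if $h_d\cap h_e=\{w\}$ then $h_1,h_d,h_e$ is already a linear triangle with corners $x,w,y$; or, more cleanly, observe that $h_1,h_d,h_2$ and $h_1,h_e,h_2$ would then be two distinct linear paths in the linear tree $T$ joining $h_1$ to $h_2$ --- so the conclusion survives. The paper's argument for this step is much shorter and avoids Claim~\ref{ass} and the tree structure altogether: since $d$ and $e$ are each strongly associated to $h_1$, one can pick \emph{distinct} supporting hyperedges $h_3\ni d$ and $h_4\ni e$ in $h_1$ (any two distinct $2$-subsets of $h_1$ meet in exactly one point), and then $h_2,h_3,h_4$ is a linear triangle in $H$ directly.
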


\begin{proof} [Proof of Claim \ref{two_arrows}]
We will show that $\abs{h_1 \cap h_2} = 1$. Assume by contradiction that $\abs{h_1 \cap h_2} = \emptyset$ and $u, v \in h_2$ are strongly associated to $h_1$. Then it is easy to see that we can choose appropriate supporting hyperedges $h_3, h_4$ of $u$ and $v$, respectively, in $h_2$ such that the hyperedges $h_2, h_3, h_4$ form a linear triangle, a contradiction. 

Let $h_1 = abc$ and $h_2 = bde$, i.e., $d$ and $e$ are strongly associated to $h_1$. Assume by contradiction that there exists a hyperedge $xyz \in H$ such that $\{x,y\} \subset \{a,b,c,d,e\}$ and $z \not \in \{a,b,c,d,e\}$. First let us observe that $\{x,y\} \not \subset \{a,b,c\}$ because the hyperedge $abc$ already has two vertices $d, e$ strongly associated to it and hence cannot have any other vertex associated to it due to Lemma \ref{association}. So if we consider the bipartite graph whose color classes are $\{d,e\}$ and $\{\{a,b\}, \{b,c\}\}$ where $v \in \{d,e\}$ is connected to $\{x,y\} \in \{\{a,b\}, \{b,c\}\}$ if $vxy \in E(H)$, We claim that Hall's condition holds for this bipartite graph. Since the hyperedge $abc$ is thick, using Lemma \ref{association}, $s_{abc}(d) \cup s_{abc}(e) = \{\{a,b\}, \{b,c\}, \{a,c\}\}$. So the union of the neighborhood of $d$ and $e$ in this bipartite graph is $\{\{a,b\}, \{b,c\}\}$. Since $d$ and $e$ are strongly associated to $abc$, they each have at least one neighbor in $\{\{a,b\}, \{b,c\}\}$. So there is a matching by Hall's theorem. 

So either $abe, cbd \in E(H)$ or $abd, cbe \in E(H)$. Now, by applying Claim \ref{matching}, we can conclude that $\{a,b,c,d,e\}$ is a special block.
 \end{proof}

%

 Since the hypergraph induced on $\{a,b,c,d,e\}$ is not $K_5^3$, it is easy to see that there is a proper coloring $\gamma: \{a,b,c,d,e\} \mapsto \{1,2\}$.


\begin{claim}
\label{removeblock}
Assume that $h_1 =abc$, $h_2 =bde$ and $\{a,b,c,d,e\}$ is a special block of $T$. Let $T_a, T_b, T_c, T_d, T_e$ be maximal linear subtrees of $T$ such that $V(T_x) \cap \{a,b,c,d,e\} = \{x\}$ where $x \in \{a,b,c,d,e\}$. Then, if Lemma \ref{skeleton_coloring} holds for each $T_x$, where $x \in \{a,b,c,d,e\}$ and coloring $\gamma: \{a,b,c,d,e\} \mapsto \{1,2\}$ is given, then it holds for $T$ as well.
\end{claim}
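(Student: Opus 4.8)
The plan is to cut $T$ at the special block, colour the pieces using the hypothesis, and glue the colourings together. First I would record the structure of $T$: the connected components of the hypergraph obtained from $T$ by deleting the two hyperedges $h_1$ and $h_2$ have vertex sets $V(T_a),\dots,V(T_e)$, so these five sets partition $V(T)$, $E(T)=\{h_1,h_2\}\cup\bigcup_x E(T_x)$ is a disjoint union, and every hyperedge of $T$ other than $h_1,h_2$ lies inside a single $T_x$. I would also note that, since $\{a,b,c,d,e\}$ is a special block, every $e\in E(H)$ has $|e\cap(h_1\cup h_2)|\in\{0,1,3\}$, with value $3$ forcing $e\subseteq\{a,b,c,d,e\}$. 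Now apply Lemma~\ref{skeleton_coloring}, which holds for each $T_x$ by hypothesis, together with Observation~\ref{observation}, to colour each $T_x$ by a colouring $\gamma_x : V(T_x)\to\{1,2\}$ in which $x$ keeps the colour $\gamma(x)$; since the $V(T_x)$ are disjoint and all agree with $\gamma$ on $\{a,b,c,d,e\}$, the $\gamma_x$ glue into a single $2$-colouring of $V(T)$ extending $\gamma$, and I claim this colouring witnesses Lemma~\ref{skeleton_coloring} for $T$.

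The engine of the argument is three facts, each proved by exhibiting a linear cycle and falling back on Claim~\ref{ass} and the special-block hypothesis in the degenerate cases. (a) If $v\in V(H)\setminus V(T)$ and $vab\in E(H)$ with $a,b\in V(T)$, then $a$ and $b$ lie in a common $T_x$; otherwise the unique linear path in $T$ from $a$ to $b$ avoids $v$ and, having length $\ge 2$, closes up with $vab$ into a linear cycle, unless it has length $1$, in which case $a,b$ are block vertices in a common $h_i$ and $vab$ has exactly two vertices in $h_1\cup h_2$, violating the special-block property. (b) No hyperedge $e\in E(H)$ with $e\subseteq V(T)$ and $|e\cap(h_1\cup h_2)|\le 1$ has its three vertices in three distinct branches; the same cycle argument applies, the one delicate sub-case being when the third vertex of $e$ is a block vertex lying on the path between the other two, handled by instead routing a path from an endpoint to that block vertex that avoids the remaining vertex. (c) If $e=pqr\in E(H)$ with $p,q\in V(T_x)$, $r\in V(T_y)$, $x\ne y$, and $e\notin\{h_1,h_2\}$, then $r$ is not strongly associated to any hyperedge of $T_x$: if it were strongly associated to $g\in E(T_x)$, then by Claim~\ref{ass} the vertex $r$ would lie in a hyperedge of $T$ meeting $g$, which is either a hyperedge of $T_x$ (impossible as $r\notin V(T_x)$) or $h_1$ or $h_2$, forcing $r$ to be the block vertex $y$ and $g$ to contain $x$, say $g=x\alpha\beta$; but then a supporting hyperedge $yx\alpha$ or $yx\beta$ of $r$ in $g$ has exactly two vertices in $h_1\cup h_2$, again contradicting the special-block property.

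Granting (a)--(c), the three properties of Lemma~\ref{skeleton_coloring} follow. \emph{Property 1.} Let $e\in E(H)$ with $e\subseteq V(T)$: if $|e\cap(h_1\cup h_2)|=3$ then $e\subseteq\{a,b,c,d,e\}$ and $e$ is properly coloured since $\gamma$ is; if $e\subseteq V(T_x)$ then it is properly coloured by Property 1 for $\gamma_x$; otherwise $e$ meets exactly two branches (three being excluded by (b)), with two vertices $p,q$ in some $T_x$ and one vertex $r$ elsewhere, and since $e\notin\{h_1,h_2\}$, fact (c) together with Property 3 for $\gamma_x$ applied to $r$ gives $\gamma(p)\ne\gamma(q)$, so $e$ is not monochromatic. \emph{Property 2.} A vertex $v\in V(H)\setminus V(T)$ strongly associated to a hyperedge of $T$ is not strongly associated to $h_1$ or $h_2$ (a supporting hyperedge would violate the special-block property), hence is strongly associated to a hyperedge of some branch $T_x$ and, by Claim~\ref{ass} and the disjointness of branches, only to hyperedges of that one branch; colour $v$ with the colour furnished by Property 2 for $\gamma_x$. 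By (a), every $vab\in E(H)$ with $a,b\in V(T)$ has $a,b$ in one branch $T_y$; if $y=x$ this hyperedge is proper by the choice of colour, and if $y\ne x$ then $v$ is not strongly associated to any hyperedge of $T_y$, so Property 3 for $\gamma_y$ gives $\gamma(a)\ne\gamma(b)$. \emph{Property 3.} If $v\in V(H)\setminus V(T)$ is strongly associated to no hyperedge of $T$, then for each $vab\in E(H)$ with $a,b\in V(T)$, fact (a) puts $a,b$ in one branch $T_y$ and Property 3 for $\gamma_y$ applies to $v$ directly, giving $\gamma(a)\ne\gamma(b)$.

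I expect the real work to lie in making facts (a)--(c) precise, and in particular in Property 1: one must be certain that a hyperedge of $H$ contained in $V(T)$ but spanning two or three branches genuinely produces a linear cycle, and it is precisely the degenerate configurations (short connecting paths, and block vertices serving as connectors) where the special-block hypothesis, rather than a cycle, has to be invoked. Once the branch decomposition and facts (a)--(c) are set up, the remaining verification is routine.
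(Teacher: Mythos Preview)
Your proposal is correct and follows essentially the same approach as the paper's proof: decompose $T$ at the special block into the five branches $T_x$, colour each branch via the hypothesis and Observation~\ref{observation}, and verify the three properties of Lemma~\ref{skeleton_coloring} by showing that hyperedges crossing branches are controlled by the special-block condition together with Claim~\ref{ass}. Your facts (a), (b), (c) correspond directly to the paper's intermediate observations (hyperedges $vpq$ with $v\notin V(T)$ have $p,q$ in one branch; no hyperedge spans three branches unless inside the block; a vertex of $T_y$ is never strongly associated to a hyperedge of $T_x$ for $y\neq x$), and the verification of Properties~1--3 then proceeds identically. If anything, you are slightly more explicit than the paper in noting that $v\in V(H)\setminus V(T)$ cannot be strongly associated to $h_1$ or $h_2$ themselves, since any supporting hyperedge would meet $h_1\cup h_2$ in exactly two vertices.
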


\begin{observation}
It is easy to see that $V(T_x) \cap V(T_y) = \emptyset$ for any distinct $x, y \in \{a,b,c,d,e\}$ and $\cup_{x \in \{a,b,c,d,e\} } E(T_x) \cup \{h_1, h_2\} = E(T)$.
\end{observation}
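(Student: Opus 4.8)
The plan is to analyze the linear tree $T$ after deleting the two block hyperedges $h_1 = abc$ and $h_2 = bde$, and to show that the branches $T_a,\dots,T_e$ are exactly the connected components of what remains, each attached to the block at a single vertex. Write $B = \{a,b,c,d,e\} = h_1 \cup h_2$ for the block and let $T' = T - \{h_1,h_2\}$ be the sub-hypergraph obtained by removing these two hyperedges (keeping all vertices). I would prove both assertions by identifying, for each $x \in B$, the subtree $T_x$ with the connected component of $x$ in $T'$.

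First I would record the crucial local fact: every hyperedge $h \in E(T) \setminus \{h_1,h_2\}$ meets $B$ in at most one vertex. Indeed, $|h \cap B| = 2$ is ruled out directly by the definition of a special block (it would give a hyperedge with $|h \cap (h_1 \cup h_2)| = 2$), while $|h \cap B| = 3$ is impossible because every $3$-subset of $B$ other than $h_1,h_2$ shares two vertices with $h_1$ or with $h_2$, which a linear tree forbids (two hyperedges of a linear tree meet in at most one vertex). This gives the edge statement once the branch structure is known: since $h_1,h_2$ are the only hyperedges of $T$ touching $B$ in more than one vertex, they lie in no $T_x$, and any other hyperedge $h$ lies in a single component of $T'$; tracing the (connected) path in $T$ from $h$ to $h_1$ shows this component is attached to $B$ through exactly one block vertex $x$, the first block vertex encountered, so $h \in E(T_x)$. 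Hence $\bigcup_{x \in B} E(T_x) \cup \{h_1,h_2\} = E(T)$, the union over $x$ being disjoint because distinct components of $T'$ share no hyperedge.

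For the vertex-disjointness I would show that each component of $T'$ contains at most one vertex of $B$. Suppose a component $C$ contained two block vertices; choose such a pair $x,y$ joined by a shortest linear path inside $C$, so that no internal vertex of this path lies in $B$. Concatenating this path with the short path from $y$ back to $x$ through $h_1$ and/or $h_2$ produces a closed walk, and I claim it is a genuine linear cycle, contradicting that $T$ is linear-cycle-free. This verification is the delicate point and the step I expect to be the main obstacle: one must check that the $C$-path and the block path overlap only at $x$ and $y$. This is where the structure pays off, since every vertex of $h_1,h_2$ is a block vertex, any $C$-hyperedge can meet $h_1$ or $h_2$ only in its (at most one) block vertex, which by the shortest-path choice is $x$, $y$, or nothing; together with the length bound $\ell \ge 2$ forced by the ``at most one block vertex'' fact, this makes the closed walk a valid linear cycle of length at least $3$.

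Finally I would close the loop by identifying $T_x$ with the component $C_x$ of $x$ in $T'$: this component is a connected sub-linear-tree meeting $B$ only in $x$, and it cannot be enlarged without adding $h_1$ or $h_2$ (which would bring in further block vertices), so it is exactly the maximal subtree $T_x$. Since distinct components of $T'$ share no vertex and meet $B$ in distinct single vertices, $V(T_x) \cap V(T_y) = \emptyset$ for $x \ne y$, completing the proof.
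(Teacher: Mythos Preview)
The paper states this observation without proof, so there is nothing to compare against; your argument is sound and fills in the details correctly.

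One simplification: instead of building a closed walk and carefully verifying it is a genuine linear cycle, you can appeal directly to the tree structure. In a linear tree the linear path between any two vertices is unique, and for any two block vertices $x,y\in B$ this unique path already lies inside $\{h_1,h_2\}$ (length~$1$ if $x,y$ share a block hyperedge, length~$2$ otherwise). Hence no $h_1,h_2$-avoiding linear path between two block vertices can exist in $T$, which immediately gives that each component of $T' = T-\{h_1,h_2\}$ meets $B$ in at most one vertex. This sidesteps the ``delicate point'' you flagged. Your identification of $T_x$ with the component $C_x$ of $x$ in $T'$, and the edge-partition argument, are exactly right.
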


\begin{proof}[Proof of Claim \ref{removeblock}]
Take the $2$-colorings of $T_x$'s ($x \in \{a,b,c,d,e\}$) guaranteed by Lemma \ref{skeleton_coloring} and Observation \ref{observation} such that the color of $x$ is $\gamma(x)$. 

First we show that the hypergraph induced on $V(T)$ is properly $2$-colored. Clearly there is no hyperedge with its vertices in three different $T_x$'s unless it is contained in $\{a,b,c,d,e\}$ because $\{a,b,c,d,e\}$ is a special block and there is no linear cycle in $H$.  

Now we will prove that $w \in V(T_y)$ is not strongly associated to any hyperedge of $T_x$ (for any $y \not = x$). Suppose $w$ is strongly associated to a hyperedge $h$ of $T_x$. Since $w$ is in $T$, by Claim \ref{ass}, there is a hyperedge $h'$ of $T$ which contains $w$ such that $\abs{h \cap h'} = 1$. So $h'$ is a hyperedge of $T$ that has a common vertex with both $T_x$ and $T_y$. Therefore, $h'$ must be either $h_1$ or $h_2$. Moreover, $w = y$ and $h \cap h' =\{x\}$ must hold. Let $h = xpq$. Since $w = y$ is strongly associated to $h$, either $xyp \in E(H)$ or $xyq \in E(H)$, a contradiction to the assumption that $x$ and $y$ belong to a special block. 

So by applying Lemma \ref{skeleton_coloring} to $T_x$, for each hyperedge $uvw$ with $ u, v \in V(T_x)$ and $w \in V(T) \setminus V(T_x)$ the color of $u$ and the color of $v$ are different and so $uvw$ is properly $2$-colored. Since there is no hyperedge with its vertices in three different $T_x$'s, the hypergraph induced by $V(T)$ is properly $2$-colored. 

Let $v \in V(H) \setminus V(T)$. First assume that $v$ is not strongly associated to any hyperedge of $T$ and let $p,q \in V(T)$ be arbitrary. We have to show that if $vpq \in E(H)$ then the colors of $p$ and $q$ are different. If $p,q \in T_x$ for some $x \in \{a,b,c,d,e\}$ then we are done because we assumed Lemma \ref{skeleton_coloring} holds for $T_x$.  So, let $p \in T_x$ and $q \in T_y$ for some distinct $x, y \in \{a,b,c,d,e\}$. Since both $p$ and $q$ can't be in $\{a,b,c,d,e\}$ (by definition of special block), the linear path between $p$ and $q$ in $T$ has at least $2$ hyperedges. This linear path, together with $vpq$ forms a linear cycle, a contradiction.

Now assume that $v$ is strongly associated to a hyperedge of $T$. If $v$ is strongly associated to hyperedges $h_x, h_y$ of $T$ such that $h_x \in E(T_x)$ and $h_y \in E(T_y)$, then as before we can extend a linear path in $T$ between $h_x$ and $h_y$ to a linear cycle by adding appropriate supporting hyperedges of $v$ in $h_x$ and $h_y$. This implies that there is a unique $x \in \{a,b,c,d,e\}$ such that $v$ is strongly associated to hyperedge(s) of $T_x$. Now we show that $v$ can be colored so that all the hyperedges $vpq$ are properly $2$-colored.

By the argument in the previous paragraph if $vpq \in E(H)$ then both $p$ and $q$ are in $T_y$ for some $y \in \{a,b,c,d,e\}$. If $y = x$, then by applying Lemma \ref{skeleton_coloring} to $T_y$, there is a coloring of $v$ such that hyperedges $vpq$ are properly $2$-colored. If $y \not = x$, then $v$ is not strongly associated to any hyperedge of $T_y$. So by applying Lemma \ref{skeleton_coloring} to $T_y$ again, the colors of $p$ and $q$ are different. Therefore, the hyperedges $vpq$ are properly $2$-colored as desired.
\qedhere
\end{proof}

So applying Claim \ref{removeblock} recursively, it suffices to prove Lemma \ref{skeleton_coloring} for a linear subtree $T$ of $H$ which has no special block. So from now on, we may assume that there is no special block in $T$.

We will now construct an auxillary (simple) graph $G_T$ by following the steps in the \emph{Construction} below, one after another. This graph is connected, and its vertex set and edge set satisfy: $V(G_T) = V(T)$ and if $ab \in E(G_T)$ then there exists a vertex $x \in V(T)$ such that $abx \in E(T)$. We show later that this graph $G_T$ is actually a tree and that a proper $2$-coloring of $G_T$ will give us a proper $2$-coloring of the hypergraph induced on $V(T)$ as demanded by Lemma \ref{skeleton_coloring}.

\begin{cons}
We perform the steps as follows. First Step 1 as long as we can, then Step 2 as long as we can, and so on. Naturally, edges added earlier will not be added again. 

\begin{itemize}
\item [Step 1.] If $abc, ebd \in E(T)$, $abc$ is a thick hyperedge and $e$ is strongly associated to $abc$ then,
\begin{enumerate}
  \item [(a)] add $eb$ to $E(G_T)$.
  \item [(b)] and if $ace \in E(H)$ also holds, then add $ac$ to $E(G_T)$ as well.
\end{enumerate}

\item [Step 2.] If $abc \in E(T)$, $vab \in E(H)$ and $v$ is weakly associated to $abc$, then add $ab$ to $E(G_T)$.

\item [Step 3.] If $abc, ebd \in E(T)$, $v \in V(H)\setminus V(T)$ is strongly associated to $abc$ and $ebd$, and if $acv$ (respectively $edv$) is a hyperedge of $H$, then add $ac$ (respectively $ed$) to $E(G_T)$.

\item [Step 4.]After completing the above steps, for every hyperedge $abc \in E(T)$ we do the following. If $abc$ is thick, and less than two of the three pairs $ab, bc, ca$ are in $E(G_T)$ we add some more pairs arbitrarily so that $E(G_T)$ has exactly two pairs from $ab, bc, ca$. If $abc$ is not thick and less than two of the three pairs $ab, bc, ca$ are in $E(G_T)$, we add pairs from $ab, bc, ca$ such that only one pair remains outside $E(G_T)$ and it is not a thick pair.
\end{itemize}
\end{cons}

\begin{remark}
Notice that all edges $xy$ added in Steps 1, 2, 3 satisfy that $\{x, y\}$ is a thick pair. 
\end{remark}

Now we claim the following.
\begin{claim}
\label{tree_lemma}
$G_T$ is a tree (so it can be properly $2$-colored).
\end{claim}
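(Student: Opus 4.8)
The plan is to show that $G_T$ is connected and acyclic. Connectedness should follow from the structure of the construction: $V(G_T) = V(T)$, and whenever $abc \in E(T)$, Step 4 guarantees that at least two of the three pairs $ab, bc, ca$ lie in $E(G_T)$. In particular every hyperedge of the linear tree $T$ contributes a connected ``path'' on its three vertices inside $G_T$, and since $T$ itself is a linear tree (any two hyperedges are joined by a chain of pairwise-intersecting hyperedges along shared vertices), these connected pieces overlap at the shared vertices and glue together into a connected graph on $V(T)$. So the main content is acyclicity.

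For acyclicity, I would argue by contradiction: suppose $G_T$ contains a cycle $C = x_1 x_2 \cdots x_\ell x_1$. By the Construction remark, every edge $x_i x_{i+1}$ of $G_T$ arises from some hyperedge $e_i = x_i x_{i+1} z_i \in E(T)$, and moreover (except possibly edges added in Step 4 corresponding to non-thick hyperedges) each such pair is thick. The idea is to use these hyperedges $e_i$, together with the supporting hyperedges that caused the Step 1/2/3 additions, to build a linear cycle in $H$, contradicting the hypothesis that $H$ is linear-cycle-free. The key structural facts to exploit are: (i) $T$ has no special block, so Claims \ref{matching} and \ref{two_arrows} cannot produce special blocks — this restricts which configurations of strongly associated vertices can occur; (ii) Lemma \ref{association} pins down the associated vertices of a thick hyperedge; (iii) Claim \ref{ass} controls where strongly associated vertices of $T$ must sit. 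I expect the proof proceeds by taking a shortest cycle $C$ in $G_T$ and analyzing, edge by edge, which step of the Construction introduced each edge, then splicing together a closed alternating sequence of distinct vertices and distinct hyperedges of $H$ — essentially walking around $C$ in $H$ using the hyperedges $e_i$ and the supporting hyperedges, while the ``no special block'' assumption and Lemma \ref{association} prevent the walk from degenerating or reusing a hyperedge.

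The main obstacle will be the bookkeeping needed to guarantee that the hyperedges assembled along the cycle are genuinely \emph{distinct} and that consecutive ones meet in exactly one vertex with the correct disjointness pattern — i.e., that we really get a linear cycle and not merely a closed walk. In particular, when two consecutive edges $x_{i-1}x_i$ and $x_i x_{i+1}$ of $C$ both come from the \emph{same} hyperedge $abc \in E(T)$ (say via Step 4, which can add two of the three pairs of one hyperedge), the naive walk would traverse the vertex $x_i$ using a single hyperedge on both sides, which is fine locally but must be reconciled globally; and when edges come from Step 1(b) or Step 3 (the "$ac$" edges added alongside an "$eb$" edge), one must track the auxiliary vertex $e$ or $v$ carefully. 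I would handle this by first reducing to a shortest cycle (so no chords, hence no two vertices of $C$ lie in a common hyperedge of $T$ except consecutively), then case-splitting on the step responsible for each edge, and in each case exhibiting the linear cycle explicitly; the "no special block" reduction is precisely what rules out the short, awkward cases (length-$3$ cycles in $G_T$ coming from a single thick hyperedge or from two neighboring thick hyperedges with a cross matching).

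Finally, once $G_T$ is shown to be a connected acyclic graph, it is a tree, hence properly $2$-colorable, which is the assertion of the claim. (The later parts of Lemma \ref{skeleton_coloring} — that this $2$-coloring of $G_T$ induces a valid coloring of the hypergraph on $V(T)$ and extends appropriately to strongly/weakly associated outside vertices — are separate and would be verified afterwards using the same Construction steps, but they are beyond the statement of Claim \ref{tree_lemma} itself.)
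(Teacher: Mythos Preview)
Your proposal has a genuine gap: you are planning to analyze a general shortest cycle $C = x_1 x_2 \cdots x_\ell x_1$ in $G_T$ and to build a linear cycle in $H$ by walking around $C$, treating the triangle case as a ``short, awkward'' afterthought. But the crucial structural fact you are missing is that \emph{every} cycle in $G_T$ is necessarily a triangle $abc$ with $abc \in E(T)$ (and thick). This is because each edge of $G_T$ is a pair contained in some hyperedge of $T$, so $G_T$ sits inside the $2$-shadow of the linear tree $T$; and the $2$-shadow of a linear tree is a graph whose only cycles are the triangles coming from single hyperedges (three pairwise-intersecting hyperedges of a linear tree share a common vertex, which quickly rules out any other triangle, and hence any longer cycle). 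Moreover, none of the three edges of such a triangle can come from Step~4 (Step~4 never creates a third pair inside a thick hyperedge, and for a non-thick hyperedge it leaves a non-thick pair out), so all three edges of the triangle were added in Steps~1--3. This reduction is the first and most important move in the paper's proof.

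Once you know the cycle is a single thick hyperedge $abc$ with all three pairs added in Steps~1--3, the ``walk around $C$'' strategy evaporates --- there is nothing to walk around --- and the real work is a case analysis on \emph{which} of Steps~1a, 1b, 2, 3 produced each of $ab, bc, ca$. This is not dispatched by the ``no special block'' assumption alone; the paper needs three separate cases (an edge from Step~1b; an edge from Step~3; all edges from Steps~1a/2), each time tracking the witnessing vertices $e$, $v$, $w$ and using Lemma~\ref{association} together with Claims~\ref{matching} and~\ref{two_arrows} (via the no-special-block hypothesis) to force an explicit linear cycle in $H$. Your sketch does not indicate how you would carry out this analysis, and your remark that the special-block reduction ``rules out'' the triangle case understates what is actually required.
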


Before we prove the above claim, we will show that it implies Lemma \ref{skeleton_coloring}.

First let us prove that a proper $2$-coloring of $G_T$ gives us a proper $2$-coloring of the subhypergraph induced by $V(T)$. Since $V(G_T) = V(T)$, a proper $2$-coloring of $G_T$ gives us a proper $2$-coloring of the hyperedges of $T$. Therefore, it suffices to prove that for every hyperedge $abc \in E(T)$, the hyperedges $xyv$ where $x,y \in \{a,b,c\}$ and $v \in V(T) \setminus \{a,b,c\}$ are properly $2$-colored. If $abc$ is not thick, then it is easy to see that $xy$ (which has to be a thick pair) must be in $G_T$ (due to Step 4 of Construction of $G_T$) which means that $x$ and $y$ have different colors and so the hyperedge $xyv$ is properly $2$-colored, as desired. If $abc$ is thick, then $v$ must be associated to $abc$. If $v$ is weakly associated to $abc$, then by the construction of $G_T$ (Step 2 of Construction), $xy$ must be in $G_T$ and so $xyv$ is properly $2$-colored again. If $v$ is strongly associated to $abc$, then by Claim \ref{ass}, $v$ belongs to a hyperedge $h$ neighboring $abc$ in $T$ (i.e., $\abs{h \cap abc} = 1$). W.l.o.g we may assume that  $h \cap abc = \{b\}$, and let $h := vbw$. By Construction Step 1a and 1b of $G_T$, we have $bv, ac \in E(G_T)$ if $acv \in E(H)$. So $b$ and $v$ have different colors and $a$ and $c$ have different colors. Therefore, all the hyperedges $vxy$ are properly $2$-colored. So the subhypergraph induced by $V(T)$ is properly $2$-colored.

Now let $v \in V(H) \setminus V(T)$. Note that for any $xyv \in E(H)$ where $x, y \in V(T)$, $x, y$ must belong to a hyperedge of $T$. We will show that $v$ can be colored as required in Lemma \ref{skeleton_coloring}. If $v$ is not strongly associated to any hyperedge of $T$, then for every $xyv \in E(H)$, $xy \in E(G_T)$ and so $xyv$ is properly $2$-colored regardless of the color of $v$. So assume that $v$ is strongly associated to hyperedges $h_1, h_2, \ldots, h_k$ of $T$. We consider two cases. If $k \ge 2$, then by Claim \ref{ass}, $\abs{h_i \cap h_j} \not = \emptyset$ for every $i, j \in \{1,2,\ldots,k\}$. Since $h_i$ are hyperedges of a linear tree, and every two of them have a common vertex, there is a vertex $o$ such that $\cap_i h_i = \{o\}$. Let us show that choosing the color of $v$ to be different from the color of $o$ guarentees that all the hyperedges $xyv \in E(H)$ for $x,y \in V(T)$ are properly $2$-colored, as required by Lemma \ref{skeleton_coloring}. If $\{x, y \} \not \subseteq h_i$ for any $i$, then as we saw before $xyv$ is properly $2$-colored independent of the color of $v$. So $xy \in h_i$ for some $i$. If $o \in \{x,y\}$, then since $o$ and $v$ are colored differently, $xyv$ is $2$-colored properly. If $o \not \in \{x,y\}$, then by the construction of $G_T$ (see Construction Step $3$), $xy$ is in $G_T$ and so $xyv$ is properly $2$-colored, as desired. So the only remaining case is if $k = 1$. In this case, the hyperedge $h_1$ has two vertices of the same color and if we color $v$ differently from this color, hyperedges $vxy$ where $x,y \in V(T)$ are properly $2$-colored. This completes the proof of Lemma \ref{skeleton_coloring}.

\begin{proof} [Proof of Claim \ref{tree_lemma}]
Notice that $G_T$ is connected as guaranteed by Construction Step 4. Assume by contradiction that $G_T$ has a cycle. Since $T$ is a linear tree, this cycle has to be a triangle $abc$ where $abc \in E(T)$ is a thick hyperedge. First observe that none of the pairs $ab,bc,ca$ were added during Step $4$ of the construction of $G_T$.  We now consider different cases for how $abc$ could be formed.

\begin{case}
One of the pairs $ab, bc, ca$ was added by Construction Step 1b.
\end{case}

W.l.o.g let the pair added by Construction Step 1b was $ac$. Then, there exists a hyperedge $ebd \in E(T)$ such that $e$ is strongly associated to $abc$ and $ace \in E(H)$. So either $abe$ or $bce$ is in $E(H)$. Clearly, there is no $w \not \in \{a,b,c,d,e\}$ such that $wab$ or $wbc$ is a hyperedge of $H$ for otherwise we have a linear cycle. Since $abc$ is thick, $ab, bc$ are thick pairs.
If either $bcd$ or $abd$ is in $E(H)$, then the conditions of Hall's theorem hold for the bipartite graph whose color classes are $\{ab, bc\}$ and $\{d,e\}$ where $xy \in \{ab, bc\}$ is connected to $z \in \{d,e\}$ if and only if $xyz \in E(H)$. So there is a matching and by Claim \ref{matching}, we have a contradiction since we assumed there is no special block of $T$.
So assume that $bcd, abd \not \in E(H)$. So the only hyperedges (besides $abc$) containing $ab$ and $bc$ are $abe$ and $bce$ which implies that $ab$ and $bc$ were not added by Construction Steps $1b$, $2$ and $3$. So both $ab$ and $bc$ were added by Construction Step $1a$. Assume that $bc$ was added because either $b$ or $c$ was strongly associated to a hyperedge $h'$. This means that $h'$ is thick and $h' = bde$ because otherwise we have $wbc \in E(H)$ for some $w \not \in \{a,b,c,d,e\}$, a contradiction. So $c$ is strongly associated to $bde$. Similarly, $a$ is strongly associated to $bde$. So by Claim \ref{two_arrows}, $\{a,b,c,d,e\}$ is a special block, a contradiction.

So from now on, we can assume that Construction Step 1b was never used to add the pairs $ab, bc, ca$.

\begin{case}
	One of the pairs $ab, bc, ca$ was added by Construction Step $3$.
\end{case}
W.l.o.g let us say $ac$ was added by Construction Step $3$. Then, there is a hyperedge $bde \in E(T)$ and $v \in V(H) \setminus V(T)$ such that $v$ is strongly associated to both hyperedges $abc$, $bed$ and $acv \in E(H)$.
Since $ab$ is a thick-pair, there is a vertex $w \not \in \{a,b,c\}$ such that $abw \in E(H)$. If $w \not \in \{a,b,c,d,e,v \}$ then since $acv, wab \in E(H)$ and one of $bev, bdv \in E(H)$, they form a linear cycle, a contradiction. If $w = e$, then since $abe, acv \in E(H)$ and one of $bdv, dev \in E(H)$, we have a linear cycle again, a contradiction. Similarly $w \not= d$. Therefore, $w = v$. So the only hyperedge besides $abc$ which contains $ab$, is $abv$. Similarly, the only hyperedge besides $abc$ which contains $bc$ is $bcv$. This implies that $ab$ and $bc$ were not added by Construction Step 1, 2 and 4. Also, it's easy to see that they were not added by Construction Step 3, otherwise $v$ would have been strongly associated to a hyperedge of $T$ which is not a neighbor of $ebd$, which is a contradiction.

So the only reminaing case is when $ab, bc, ca$ are added by Construction Step $1a$ or $2$.

\begin{case}
$ab, bc, ca$ were added by Construction Step $1a$ or $2$.
\end{case}
Two of the pairs $ab, bc, ca$ cannot be added by Construction Step $2$ due to Lemma \ref{association}. Therefore, we have two subcases: Either exactly one of $ab, bc, ca$ was added by Construction Step $2$ and the other two were added by Construction Step $1a$ or all of them were added by Construction Step $1a$.

Assume that all of the pairs $ab, bc, ca$ were added by Construction Step $1a$. Let $xy \in \{ab, bc, ca\}$. Let us say $xy$ was added because there is a thick hyperedge $h_{xy} \in E(T)$ which is strongly associated to either $x$ or $y$. If any two of the there hyperedges $h_{ab}, h_{bc}, h_{ca}$ are the same, then by Claim \ref{two_arrows}, we have a special block in $T$, a contradiction. Therefore, $h_{ab} \not= h_{bc} \not= h_{ca}$. But then, we have hyperedges $abv_1, acv_2, bcv_3 \in E(H)$ where $v_1 \in h_{ab}, v_2 \in h_{bc}, v_3 \in h_{ac}$ which form a linear cycle, a contradiction.

Now assume that one of the pairs $ab, bc, ca$ was added by Construction Step $2$ and the other two were added by Construction Step $1a$. W.l.o.g assume that $ab$ and $bc$ were added by Construction Step $1a$ and $ca$ by Construction Step $2$.  Let us say $ab$ (respectively $bc$) was added because there is a thick hyperedge $h_{ab} \in E(T)$ (respectively $h_{bc} \in E(T)$) which is strongly associated to either $a$ or $b$ (respectively $b$ or $c$). So there are vertices $v_1 \in h_{ab}$ and $v_2 \in h_{bc}$ such that $abv_1, bcv_2 \in E(H)$. If $h_{ab} = h_{bc}$, then by Claim \ref{two_arrows} we have a special block in $T$, a contradiction. So $h_{ab} \not= h_{bc}$ as before. Let us say $ac$ was added because there is a vertex $w$ weakly associated to $abc$ such that $wac \in E(H)$. If $w \not = v_1$ and $w \not = v_2$, then we have a linear cycle, namely $acw, abv_1, bcv_2$, a contradiction. So let us assume w.l.o.g that $w = v_1$. Let $h_{ab} = v_1ex$ where $x$ is either $a$ or $b$. If $x = b$, then $h_{ab}, v_1ac, bcv_2$ is a linear cycle, a contradiction. If $x = a$, then clearly $b$ is strongly associated to $h_{ab} = v_1xe$. So either the hyperedge $abe \in E(H)$ or $bev_1 \in E(H)$. This hyperedge together with $acv_1$ and $bcv_2$ gives us a linear cycle, a contradiction.
\end{proof}

\section{Proof of Theorem \ref{degree_condition}: A degree condition for linear-cycle-free hypergraphs}
\label{degree}
Let $H$ be a $3$-uniform hypergraph without any linear cycles. The following is our main lemma. 

\begin{lemma}
\label{no_universalpair}
If there are no vertices $u, v \in V(H)$ such that $uvx \in E(H)$ for all $x \in V(H) \setminus \{u,v\}$ then there is a vertex of degree at most $\abs{V(H)}-2$ whenever $\abs{V(H)} \ge 6$.
\end{lemma}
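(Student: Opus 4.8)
The plan is to argue by contradiction: suppose every vertex of $H$ has degree at least $n-1$, where $n = \abs{V(H)}$, and no pair $\{u,v\}$ is ``universal'' in the sense that $uvx \in E(H)$ for all $x \notin \{u,v\}$. I would first count hyperedges: the bound $\sum_{v} d(v) = 3\abs{E(H)}$ combined with $d(v) \ge n-1$ gives $\abs{E(H)} \ge n(n-1)/3$. On the other hand, since $H$ is linear-cycle-free, the classical bound $\abs{E(H)} \le \binom{n-1}{2}$ (cited in the excerpt, \cite{CSK, FF}) applies. Comparing $n(n-1)/3$ with $\binom{n-1}{2} = (n-1)(n-2)/2$ shows $n(n-1)/3 \le (n-1)(n-2)/2$, i.e. $2n \le 3(n-2)$, i.e. $n \ge 6$ — so pure counting is \emph{not} enough; it only rules out $n < 6$, which is exactly the regime the lemma excludes. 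Hence the counting step only tightens things at the boundary, and the real work is structural.

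So the main idea is to exploit the near-extremality. The extremal linear-triangle-free (hence linear-cycle-free) hypergraphs on $n$ vertices with $\binom{n-1}{2}$ edges are known to be exactly the ``star-like'' hypergraphs — all hyperedges through a fixed vertex $x$ — which is precisely the configuration in Remark~\ref{degreesharpness}; in that hypergraph the vertex $x$ has degree $\binom{n-1}{2}$ but every other vertex has degree $n-2$, so it does \emph{not} satisfy our minimum-degree hypothesis. The plan is to show that the minimum-degree-$(n-1)$ hypothesis forces $H$ to be so close to this extremal star that it must actually contain a universal pair. Concretely, I would pick a vertex $x$ of maximum degree and study its link graph $L_x$ on $V(H)\setminus\{x\}$; a vertex $u$ with $\{x,u\}$ not thick-to-everything means $u$ has non-neighbors in $L_x$, and I would track how each such non-edge $\{u,w\}$ of $L_x$ must be ``compensated'' by hyperedges avoiding $x$ in order to keep $d(u) \ge n-1$. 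Each such compensating hyperedge $uwz$ (with $x \notin \{u,w,z\}$), together with the dense link of $x$, is the source of linear cycles: if $x$ is adjacent in $L_x$ to the pairs needed to route a linear path from $u$ back to $w$ avoiding the interior, we close a linear cycle through $z$.

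The key steps, in order: (1) the counting reduction above, pinning down that we are in a near-extremal regime; (2) fix a max-degree vertex $x$ and analyze $L_x$, classifying pairs $\{u,w\}$ that are non-edges of $L_x$; (3) for each such non-edge, use $d(u), d(w) \ge n-1$ to produce hyperedges disjoint from $x$, and use the density of $L_x$ (almost all pairs present) to build linear paths that close into linear cycles unless the configuration is severely restricted — in particular show that at most a bounded number of non-edges can survive; (4) conclude that $L_x$ is nearly complete, and then that either $H$ is forced to contain $K_5^3$-type local structure giving a linear cycle, or some pair $\{x,y\}$ ends up in a hyperedge with every other vertex, i.e. is universal — contradicting the hypothesis. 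I expect step (3) to be the main obstacle: one must carefully ensure that the linear paths built inside the dense link $L_x$ are genuinely \emph{linear} (vertex-disjoint except at the prescribed intersections) and disjoint from the compensating hyperedge's third vertex $z$, which requires a counting/pigeonhole argument showing $L_x$ has enough ``room'' — this is exactly where the $n \ge 6$ hypothesis should be consumed, since for very small $n$ the link is too small to route around obstructions.
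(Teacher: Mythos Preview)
Your proposal is a strategic outline, not a proof, and several of its steps would not go through as stated. The ``near-extremality'' framing is misleading: your counting gives $\abs{E(H)} \ge n(n-1)/3 \approx \tfrac{2}{3}\binom{n-1}{2}$, which is far from the extremal bound, so stability-type reasoning for the Frankl--F\"uredi theorem does not apply and there is no reason the link $L_x$ should be nearly complete. The claim in step~(4) that a ``$K_5^3$-type local structure'' yields a linear cycle is false as stated --- $K_5^3$ itself is linear-cycle-free (this is precisely why the Gy\'arf\'as--Gy\H{o}ri--Simonovits bound $\alpha \ge 2n/5$ is tight). Most importantly, step~(3), which you rightly flag as the crux, is not carried out: to close a linear triangle from a hyperedge $abc$ avoiding $x$, you need edges $ad, be$ in $L_x$ with $d,e$ distinct and outside $\{a,b,c\}$; arguing that such $d,e$ must exist (or that their absence forces a universal pair) is the entire difficulty, and your outline offers nothing beyond the hope that $n \ge 6$ gives ``enough room.''

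The paper's proof takes a completely different route: it never looks at the link of a high-degree vertex. Instead it fixes a skeleton $T$ of maximum size containing a longest linear path $\P$ of length $k$, and bounds the degree of an \emph{endpoint} $p_0$ of $\P$. The key structural input is Lemma~\ref{opposite_pair}: any hyperedge $p_0ab$ with $a,b$ outside the star at $p_0$ must have $\{a,b\}$ an opposite pair in $T$, which caps the number of such hyperedges by roughly $\abs{V(T)}/2$. Combined with Corollary~\ref{outside_degree} (controlling hyperedges that use vertices outside $T$), this yields $d(p_0) \le n-2$ after a case analysis on $k$ and on the sizes of the ``windmills'' at the ends of $\P$; the no-universal-pair hypothesis is invoked at exactly one point (inside Claim~\ref{rs}) to get $r \le n-3$. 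So where your plan defers the hard work to an unexecuted step~(3), the paper replaces the global link-graph picture with local control along a maximal linear path, which is what makes the case analysis tractable.
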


\vspace{2mm}

We prove Lemma \ref{no_universalpair} in Section \ref{no_universalpair_proof}. Using this lemma, we will prove Theorem \ref{degree_condition} in Section \ref{universal_pair}.

\subsection{Proof of Lemma \ref{no_universalpair}}
\label{no_universalpair_proof}
First let us prove some preliminary lemmas.

\subsubsection*{Preparatory lemmas}
The length of a linear path is defined as the number of hyperedges in it. Let $k$ be the length of a longest linear path in $H$. Among all skeletons that contain a linear path of length $k$, let $T$ be a skeleton of maximum possible size. Below we prove some lemmas about such a skeleton.

\begin{lemma}
	\label{onestrong_allweak}
	Any hyperedge $abc \in E(T)$ is strongly associated to at most one vertex of $V(H) \setminus V(T)$.
\end{lemma}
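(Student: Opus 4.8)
The plan is to argue by contradiction. Suppose $abc\in E(T)$ is strongly associated to two distinct vertices $u,v\in V(H)\setminus V(T)$. I will perform ``surgery'' on $T$: delete the hyperedge $abc$ and insert two new hyperedges, one through $u$ and one through $v$, producing either a skeleton strictly larger than $T$ that still contains a linear path of length $k$, or a linear path of length $k+1$ in $H$. Either outcome contradicts the extremal choice of $k$ and of $T$.

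\emph{Setup of the surgery.} Since $u$ (resp. $v$) is strongly associated to $abc$, at least two of the pairs $\{a,b\},\{b,c\},\{c,a\}$ support $u$ (resp. $v$); call these sets of pairs $S_u,S_v$, so $\abs{S_u},\abs{S_v}\ge 2$. As $S_u,S_v$ are subsets of a $3$-element set and have size at least $2$, we may choose $p_u\in S_u$, $p_v\in S_v$ with $p_u\neq p_v$ (if $S_u\neq S_v$ take $p_u\in S_u\setminus S_v$ and any $p_v\in S_v$; if $S_u=S_v$ take two distinct members). Relabel $\{a,b,c\}$ so that $p_u=\{a,b\}$ and $p_v=\{a,c\}$; then $uab,vac\in E(H)$ and $p_u\cap p_v=\{a\}$. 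Decompose $T$ around $abc$ as in the proof of Claim \ref{removeblock}: let $T_a,T_b,T_c$ be the maximal linear subtrees of $T$ with $V(T_x)\cap\{a,b,c\}=\{x\}$, so that $E(T)=E(T_a)\cup E(T_b)\cup E(T_c)\cup\{abc\}$ and $V(T_a),V(T_b),V(T_c)$ are pairwise disjoint. Let $\tilde T$ be the hypergraph on $V(T)\cup\{u,v\}$ with $E(\tilde T)=E(T_a)\cup E(T_b)\cup E(T_c)\cup\{uab,vac\}$. The first point is that $\tilde T$ is a linear tree: build $T_a$, attach $uab$ at $a$, build $T_b$ (its first hyperedge meets the current hypergraph only at $b$), attach $vac$ at $a$, build $T_c$ (its first hyperedge meets the current hypergraph only at $c$) -- at each step the added hyperedge meets the current hypergraph in exactly one vertex, using $u,v\notin V(T)$ and the disjointness of the $V(T_x)$. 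Since $\abs{E(\tilde T)}=\abs{E(T)}+1$, extending $\tilde T$ to a skeleton $T^{\#}$ gives $\abs{E(T^{\#})}\ge\abs{E(T)}+1$.

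\emph{Finding the long path.} Let $P$ be a linear path of length $k$ contained in $T$. If $abc\notin E(P)$, then since $P$ is connected and $T_a,T_b,T_c$ are vertex-disjoint with $E(T)\setminus\{abc\}=E(T_a)\cup E(T_b)\cup E(T_c)$, $P$ lies entirely inside one $T_x$, hence $P\subseteq\tilde T\subseteq T^{\#}$; then $T^{\#}$ is a skeleton strictly larger than $T$ still containing a length-$k$ path, contradicting the maximality of $T$. If $abc\in E(P)$, let $\{r,s\}\subseteq\{a,b,c\}$ be the two vertices of $abc$ that are connection vertices of $P$ at $abc$ (if $abc$ is an end hyperedge of $P$, one of $r,s$ is the endpoint vertex of $P$ lying in $abc$). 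If $\{r,s\}=p_u$ (resp. $p_v$), replace the hyperedge $abc$ in $P$ by $uab$ (resp. $vac$): the neighbours of $abc$ on $P$ lie in $T_a\cup T_b$ (resp. $T_a\cup T_c$) and meet the new hyperedge in the correct single vertex, while each hyperedge of $P$ non-adjacent to $abc$ is disjoint from $abc$ and does not contain $u$ (resp. $v$); so we obtain a length-$k$ linear path inside $\tilde T$, and again contradict the maximality of $T$ via $T^{\#}$. Finally, if $\{r,s\}$ is the remaining pair $\{b,c\}$, replace the single hyperedge $abc$ in $P$ by the two-hyperedge detour $uab,vac$ meeting at their common vertex $a$; since the non-adjacent hyperedges of $P$ meet $abc$ in the empty set, the only members of $\{a,b,c\}$ occurring as connection vertices of $P$ are $r=b$ and $s=c$, so $a$ is a genuinely new connection vertex, and $uab,vac$ are disjoint from the far hyperedges of $P$ as before; hence this is a linear path of length $k+1$ in $H$, contradicting that $k$ is the maximum length of a linear path in $H$. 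In all cases we reach a contradiction, which proves the lemma.

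\emph{The main obstacle.} The delicate part is the bookkeeping of the surgery: confirming that $\tilde T$ is genuinely a linear tree after one hyperedge is replaced by two hyperedges sharing the vertex $a$, and -- in the case where $P$ crosses $abc$ through $\{b,c\}$ -- verifying that the detour $uab,vac$ yields a valid linear path (distinct hyperedges, the new middle vertex $a$ not previously a connection vertex, and pairwise disjointness of non-consecutive hyperedges). The choice $p_u\neq p_v$ is exactly what forces the detour to route through a single common vertex and is indispensable; everything else is the routine case split on the position of the length-$k$ path relative to $abc$.
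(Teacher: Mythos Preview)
Your proof is correct and follows essentially the same approach as the paper: both argue that if two vertices $u,v\notin V(T)$ are strongly associated to $abc$, one can pick two distinct supporting pairs (the paper phrases this as a matching in a bipartite graph) and replace $abc$ by the corresponding two hyperedges to obtain a strictly larger linear tree whose longest linear path is at least as long. The paper's proof is a short sketch that asserts the resulting object is a skeleton and that the longest path length does not decrease, whereas you carry out the verification in full (the $T_a,T_b,T_c$ decomposition, the case split on how $P$ meets $abc$, and the $\{b,c\}$ detour giving a path of length $k+1$); this is exactly the ``easy to see'' that the paper omits.
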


\begin{proof}
	Suppose by contradiction that $abc \in E(T)$ is strongly associated to two vertices $v_1, v_2 \in V(H) \setminus V(T)$. Consider the bipartite graph whose color classes are $\{v_1, v_2\}$ and $\{ab, bc, ca\}$ where $v \in \{v_1, v_2\}$ and $xy \in \{ab, bc, ca\}$ are adjacent iff $vxy \in E(H)$. Then it can be easily seen that there is a matching saturating $\{v_1, v_2\}$ between the two color classes. If we replace $abc$ by the two hyperedges corresponding to this matching we will get a skeleton of bigger size and it is easy to see that the length of the longest linear path in it does not decrease, a contradiction.
\end{proof}

In the remainder of this paper the degree of a vertex $v \in V(T)$ in the subhypergraph of $H$ induced by $V(T)$ is denoted by $d_{T}(v)$. 
We have the following corollary of the above lemma.
\begin{corollary}
	\label{outside_degree}
	Let $\abs{V(H) \setminus V(T)} = t$. Then the degree of any vertex $v \in V(T)$ which is in exactly one hyperedge of $T$, is at most $d_{T}(v) + t+1$.
\end{corollary}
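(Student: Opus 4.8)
The plan is to split the degree of $v$ in $H$ according to how many endpoints of a hyperedge fall outside $V(T)$. Write $abc\in E(T)$ for the unique hyperedge of $T$ containing $v$, and set $v=a$. Since $T$ is a skeleton, $H$ has no hyperedge meeting $V(T)$ in exactly one vertex; every hyperedge through $v$ already meets $V(T)$ in $v$, so it meets $V(T)$ in at least two vertices. Hence $d(v)=d_T(v)+D$, where $D$ is the number of \emph{mixed} hyperedges $vxy\in E(H)$ with $x\in V(T)$ and $y\in V(H)\setminus V(T)$. It remains to show $D\le t+1$.

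The first key step is to prove that in every mixed hyperedge $vxy$ one has $x\in\{b,c\}$. Suppose not; then $x\in V(T)\setminus\{a,b,c\}$ lies in some hyperedge of $T$ other than $abc$, so take a shortest linear path $P$ in $T$ from $v$ to $x$. Because $v$ lies in the single hyperedge $abc$ of $T$, $P$ must begin $v,abc,\dots$, and since $x\notin abc$ its length is $\ell\ge 2$. Appending the hyperedge $vxy$ to $P$ closes it into a linear cycle of length $\ell+1\ge 3$ in $H$: the vertex $v$ lies in no hyperedge of $P$ other than $abc$ (it has $T$-degree $1$), the vertex $x$ lies in no hyperedge of $P$ other than the last one (as $P$ is shortest, and in any linear path the terminal vertex meets only the last hyperedge), and $y\notin V(T)$ meets no hyperedge of $P$ at all, so $vxy$ intersects exactly the first and last hyperedges of $P$, each in one vertex. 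This contradicts $H$ being linear-cycle-free. Hence every mixed hyperedge is of the form $vby$ or $vcy$ with $y\in V(H)\setminus V(T)$.

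The second key step is a short counting argument using Lemma \ref{onestrong_allweak}. Let $B=\{\,y\in V(H)\setminus V(T): vby\in E(H)\,\}$ and $C=\{\,y\in V(H)\setminus V(T): vcy\in E(H)\,\}$. Since $b\ne c$ and every $y$ in question lies outside $V(T)$, the $B$-type and $C$-type mixed hyperedges are distinct, so $D=|B|+|C|$, with $|B|,|C|\le t$. If $y\in B\cap C$, then $vby,vcy\in E(H)$ are two of the three supporting hyperedges of $y$ in $abc$, so $y$ is strongly associated to $abc$; by Lemma \ref{onestrong_allweak} there is at most one such $y$, i.e.\ $|B\cap C|\le 1$. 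Therefore $D=|B|+|C|=|B\cup C|+|B\cap C|\le t+1$, and $d(v)\le d_T(v)+t+1$, as claimed.

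The part needing the most care is the first step: one must pick the linear path $P$ correctly (the shortest one from $v$ to $x$) and then verify carefully that appending $vxy$ yields a genuine linear cycle — i.e.\ that $vxy$ is disjoint from every intermediate hyperedge of $P$ and meets the two end hyperedges in single vertices. This is precisely where the hypotheses that $v$ has $T$-degree one in $abc$, that $x\notin abc$, and that $y\notin V(T)$ are all used; the rest of the argument is bookkeeping plus a direct appeal to Lemma \ref{onestrong_allweak}.
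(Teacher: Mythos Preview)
Your proof is correct and follows essentially the same approach as the paper's. The paper argues more tersely --- for $x\in V(H)\setminus V(T)$ it simply asserts that the only hyperedges containing both $v$ and $x$ are $vxu$ and $vxw$ (in your notation, $vby$ and $vcy$) and then invokes Lemma~\ref{onestrong_allweak} exactly as you do --- whereas you spell out the linear-cycle argument behind that assertion in full; but the decomposition into $d_T(v)$ plus mixed hyperedges and the inclusion--exclusion count via $|B\cap C|\le 1$ are identical in substance.
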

\begin{proof}
Let $uvw$ be the hyperedge of $T$ containing $v$. The total number of hyperedges incident on $v$ is $d_{T}(v)$ plus the number of hyperedges incident on $v$ that contain a vertex from $V(H) \setminus V(T)$.

It is easy to check that if $x \in V(H) \setminus V(T)$, then at most two hyperedges of $H$ contain both $v$ and $x$: namely $vxu$ and $vxw$. Moreover, if both of them are in $H$ then $x$ is strongly associated to $uvw$ and there is at most one such $x$ by Lemma \ref{onestrong_allweak}. Therefore, for all $x \in V(H) \setminus V(T)$ except at most one, there is at most one hyperedge containing $v$ and $x$. Thus the corollary follows.
\end{proof}

\begin{definition}[star]
	\label{star_opposite_pair}
	\emph{Star} of the skeleton $T$ at $v \in V (T)$ is defined as the subtree of $T$ consisting of the hyperedges of $T$ incident to $v$. The vertex $v$ is called the \emph{center} of this star.
\end{definition}

\begin{definition}[opposite pair]
	\label{opposite_pair_def}
	Let us define a graph $G(T )$ consisting of all the pairs covered by the hyperedges of the skeleton $T$. For a vertex $v \in V (T )$ and a vertex pair $\{x, y\}$ such that $xy \in E(G(T))$, we say $\{x, y\}$  is \emph{opposite} to $v$ if $x$ and $y$ are at equal distance from $v$ in $G(T)$. This equal distance is also called the distance between $v$ and the opposite pair $\{x, y\}$.
	
	Note that every hyperedge of $T$ contains exactly one pair opposite to $v$.  
	
\end{definition}

In the next lemma, by means of opposite pairs, we can describe all the hyperedges intersecting a given star exactly in its center.

\begin{lemma}
	\label{opposite_pair}
	Let $v \in V(T)$ and $vab \in E(H)$ be such that $a, b$ are not contained in the star at $v \in  V (T )$. Then $\{a, b\}$ is a pair opposite to $v$ in $T$.
\end{lemma}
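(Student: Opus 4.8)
The plan is to argue by contradiction using the maximality properties of the skeleton $T$ (longest linear path of length $k$, and among those, largest size). Suppose $v \in V(T)$, $vab \in E(H)$, and $a,b$ are not both in the star at $v$; I want to show $\{a,b\}$ is opposite to $v$, i.e., $a$ and $b$ are at equal distance from $v$ in $G(T)$. First I would note that since $T$ is a linear tree, $G(T)$ is a connected graph in which the distance function from $v$ is well-behaved: every hyperedge of $T$ contributes a triangle, and the three vertices of a hyperedge sit at two consecutive distance levels from $v$, with exactly one ``opposite pair'' at the farther level (this is exactly the content of the note following Definition~\ref{opposite_pair_def}). So for the hyperedge (or hyperedges) of $T$ through $a$, the vertex $a$ has a well-defined distance $d(v,a)$ from $v$, and similarly $d(v,b)$; I must rule out $d(v,a) \neq d(v,b)$.

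The key step is to build, from the hyperedge $vab$ together with paths inside $T$, either a linear cycle (contradicting that $H$ is linear-cycle-free) or a skeleton that is larger or has a longer longest linear path (contradicting the choice of $T$). Concretely: take the linear path $P_a$ in $T$ from $v$ to $a$ realizing $d(v,a)$ and the linear path $P_b$ in $T$ from $v$ to $b$ realizing $d(v,b)$. If these two paths were internally disjoint, then $P_a$, the hyperedge $vab$, and the reverse of $P_b$ would close up into a linear cycle (of length $d(v,a)+d(v,b)+1 \ge 3$, using that $a,b$ are not both in the star so at least one of the distances is $\ge 2$), a contradiction. So $P_a$ and $P_b$ must share an initial segment; let $z$ be the last vertex common to both, at distance $d(v,z)$ from $v$. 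Then the portion of $P_a$ from $z$ to $a$, the hyperedge $vab$ is replaced by the portion of $P_b$ from $z$ to $b$, and these, joined at $z$, form a closed linear walk; after the shared part is stripped off this becomes a genuine linear cycle unless $d(v,a) = d(v,b)$ exactly, because the two ``arms'' from $z$ have lengths $d(v,a)-d(v,z)$ and $d(v,b)-d(v,z)$, and any discrepancy lets us route a shorter/longer linear structure that either closes into a cycle or, when fed back through $v$, produces a linear path longer than $k$ or a strictly larger skeleton.

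The main obstacle I anticipate is the careful bookkeeping in that last step: verifying that the walk obtained really is a \emph{linear} cycle (all hyperedges distinct, consecutive ones meeting in exactly one vertex, non-consecutive ones disjoint), and handling the degenerate cases where $a$ or $b$ coincides with $z$ or lies in the star, or where the hyperedge $vab$ shares a vertex other than $v$ with hyperedges of $T$. I would treat these by first observing that $vab \notin E(T)$ (else $a,b$ lie in the star), then using Claim~\ref{ass}-type reasoning: if $v$ together with $ab$ were ``too close'' to its own star we would already get a short linear cycle. The clean way to finish is: if $d(v,a) > d(v,b)$, splice $P_b \cup \{vab\}$ to reach $a$ from $v$ along a path strictly shorter than $P_a$, contradicting that $d(v,a)$ was the distance; symmetrically for $d(v,a) < d(v,b)$; hence $d(v,a)=d(v,b)$ and $\{a,b\}$ is opposite to $v$. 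The subtlety is that ``splicing'' must avoid reusing vertices and hyperedges, which is where the shared-initial-segment vertex $z$ and a minimal-counterexample choice of $vab$ (smallest $d(v,a)+d(v,b)$) make the argument go through.
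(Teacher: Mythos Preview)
Your proposal has two genuine gaps.

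First, you implicitly assume $a,b\in V(T)$ when you start reasoning about $d(v,a)$ and $d(v,b)$, but the lemma does not assume this. The paper handles this up front: if both $a,b\in V(H)\setminus V(T)$ then $vab$ meets $T$ in exactly the vertex $v$, contradicting that $T$ is a skeleton; if exactly one of them, say $a$, lies in $V(T)$, then the linear path in $T$ from $v$ to $a$ (length $\ge 2$, since $a$ is outside the star at $v$) together with $vab$ is already a linear cycle. Only after this can one speak of distances in $G(T)$.

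Second, and more seriously, your main construction does not produce a linear cycle. You propose to concatenate $P_a$, the hyperedge $vab$, and the reverse of $P_b$; but $vab$ contains $v$, and $v$ lies in the first hyperedge of $P_a$ and the first hyperedge of $P_b$, neither of which is adjacent to $vab$ in your would-be cycle. So the non-adjacency condition in the definition of a linear cycle fails. The same problem kills your ``clean'' distance argument: you cannot use $P_b\cup\{vab\}$ as a $v$--$a$ path, because $vab$ already contains $v$ and hence revisits the starting vertex. The shared-initial-segment analysis with $z$ does not repair this, since the obstruction is the vertex $v$ inside $vab$, not overlap between $P_a$ and $P_b$.

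The paper's argument sidesteps all of this by using only \emph{one} path. Assuming (after the first step) that $a,b\in V(T)$ and, say, $d(v,b)<d(v,a)$, take the linear path $P$ in $T$ from $v$ to $b$. Every vertex of $P$ has $G(T)$-distance at most $d(v,b)$ from $v$, so $a\notin V(P)$. Now $vab$ meets the first hyperedge of $P$ only in $v$ (since $a,b$ are outside the star at $v$), meets the last hyperedge only in $b$ (since $a\notin V(P)$), and is disjoint from every internal hyperedge; together with $P$ (which has length $\ge 2$) it forms a linear cycle. This is the missing idea: place $vab$ at the $v$-end of a single path to the nearer of $a,b$, rather than trying to use it as a bridge between two paths.
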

\begin{proof}
	Since $T$ is a skeleton of maximum size (among those skeletons containing a linear path of length $k$), clearly it is impossible that $a, b \in V(H) \setminus V(T)$. Moreover, if exactly one of $a, b$ is in $V(T)$, then since $\{a, b\}$ does not intersect the star at $v \in  V (T )$, it is easy to find a linear cycle, a contradiction. Therefore, both $a, b$ are in $V(T)$. Now assume for a contradiction that $\{a, b\}$ is a pair which is not opposite to $v$ in $T$. Without loss of generality let us assume that distance from $v$ to $b$ in $G(T)$ is strictly smaller than the distance from $v$ to $a$. Then it is easy to see that the linear path between $v$ and $b$ in $T$ does not contain $a$, so the hyperedge $vba$ together with this linear path forms a linear cycle, a contradiction.
\end{proof}


\begin{lemma}
	\label{prison}
	Let $\{p_0q_0p_1, p_1q_1p_2, p_2q_2p_3, \ldots, p_{k-2}q_{k-2}p_{k-1}, p_{k-1}q_{k-1}p_k\}$ be a linear path in $T$. Let $p_0q_0x \in E(H)$ for some $x \in V(T)$ and let us consider the linear path between $x$ and $p_0$. Let $\P'$ be the subpath of this linear path without the starting and ending hyperedges (i.e., not including the two hyperedges which contain $p_0$ and $x$). Then, for any $y,z\in V(\P') \setminus \{p_1\}$, $p_0yz \not \in E(H)$.
\end{lemma}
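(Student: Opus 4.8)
The plan is to argue by contradiction: supposing $p_0yz\in E(H)$ for some $y,z\in V(\P')\setminus\{p_1\}$, I would exhibit a linear cycle in $H$. Write the linear path from $p_0$ to $x$ in $T$ as $p_0=u_0,g_1,u_1,g_2,u_2,\dots,g_m,u_m=x$, so that $\P'$ has hyperedges $g_2,\dots,g_{m-1}$ and $V(\P')=g_2\cup\cdots\cup g_{m-1}$; if $m\le 2$ then $\P'$ has no hyperedge and the statement is vacuous, so assume $m\ge 3$. The key preliminary step is to pin down $g_1$: I claim $g_1=p_0q_0p_1$ and the path leaves $g_1$ at $p_1$ (i.e.\ $u_1=p_1$). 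First, if $q_0$ does not occur on the path, then --- since $p_0$ occurs only in $g_1$, $x$ only in $g_m$, and $q_0$ on no $g_i$ --- the path closed up by the hyperedge $p_0q_0x$ is a linear cycle, a contradiction; so $q_0$ occurs on the path. Since $p_0q_0p_1\in E(T)$ is a hyperedge through both $p_0$ and $q_0$, the unique linear path in $T$ from $p_0$ to $q_0$ has length $1$; the initial segment of our path up to $q_0$ is such a path, hence has length $1$, which forces $g_1=p_0q_0p_1$. Finally $u_1\in\{q_0,p_1\}$; if $u_1=q_0$ then the next hyperedge $g_2$ of the path is a hyperedge of $T$ through $q_0$ distinct from $g_1$, and using $g_2$ to re-route or lengthen the given longest linear path $P=p_0,p_0q_0p_1,p_1,\dots,p_k$ (traversing $g_1$ from $q_0$ to $p_1$) produces either a linear path of length $k+1$ or, when $g_2$ meets $V(P)\setminus\{p_0,q_0\}$, two distinct linear paths of $T$ between $q_0$ and a vertex of $P$; both are impossible. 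Hence $u_1=p_1$, so $p_1\in V(\P')$ whereas $q_0$ (the interior vertex of $g_1$) lies outside $V(\P')$ --- which is exactly why $p_1$ must be excluded.

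Next I would assume $p_0yz\in E(H)$ with $y,z\in V(\P')\setminus\{p_1\}$ (so $y\ne z$) and head for a linear cycle. For a vertex $w$ of the path let $\delta(w)$ be the length of its initial segment from $p_0$, so $\delta(u_j)=j$ and the interior vertex of each $g_j$ also has $\delta$-value $j$; since $y,z\notin\{p_0,p_1,q_0\}$ we get $\delta(y),\delta(z)\in\{2,\dots,m-1\}$. Case 1: some $g_i$ with $2\le i\le m-1$ contains both $y$ and $z$. Writing $g_i=\{u_{i-1},r_i,u_i\}$, the pair $\{y,z\}$ is $\{u_{i-1},r_i\}$, $\{u_{i-1},u_i\}$, or $\{r_i,u_i\}$; in the first two cases $y,z\ne p_1=u_1$ forces $i\ge 3$, and then $p_0yz$ meets $g_{i-1}$ only in $u_{i-1}$ and $g_1$ only in $p_0$, so $p_0yz,g_{i-1},g_{i-2},\dots,g_1$ is a linear cycle of length $i$; in the third case $p_0yz$ meets $g_{i+1}$ only in $u_i$ and $g_1$ only in $p_0$, so $p_0yz,g_{i+1},g_{i+2},\dots,g_m,p_0q_0x$ is a linear cycle (this is the one point where $p_0q_0x\in E(H)$ is used). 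Case 2: no $g_i$ contains both $y$ and $z$; then $\delta(y)\ne\delta(z)$ (two distinct vertices with the same $\delta$-value are the two deepest vertices of a common $g_d$), say $\delta(y)<\delta(z)$, and letting $s$ be least with $y\in g_s$ one has $s\ge 2$ (as $y\notin g_1=\{p_0,q_0,p_1\}$) and $\delta(y)\ge s-1$. Then $z\notin g_i$ for all $i\le s$ --- otherwise $\delta(z)\le s$, which with $s-1\le\delta(y)<\delta(z)$ forces $y=u_{s-1}\in g_s$, $z\in g_s$, contradicting Case 2 --- so $p_0yz$ meets $g_s$ only in $y$, is disjoint from $g_2,\dots,g_{s-1}$, and meets $g_1$ only in $p_0$, giving the linear cycle $p_0yz,g_s,g_{s-1},\dots,g_1$. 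In every case $H$ contains a linear cycle, a contradiction, which proves the lemma; throughout, the disjointness of non-consecutive hyperedges in the displayed cycles is immediate from linearity of the path and the fact that $p_0,q_0,x$ lie on no interior $g_i$.

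The step I expect to be the main obstacle is the structural claim of the first paragraph --- proving that the linear path from $p_0$ to $x$ leaves $p_0q_0p_1$ through $p_1$, so that $p_1$ (and nothing else) is the exceptional vertex while $q_0$ never reappears on $\P'$. That is the only place one really needs the maximality of the path length $k$ together with the tree structure of $T$; once it is in hand, the rest is careful bookkeeping, with the small-index degeneracies in Case 1 being exactly the ones killed by deleting $p_1$ from $V(\P')$.
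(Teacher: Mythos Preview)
Your proof is correct but takes a different, more hands-on route than the paper's. The paper's argument is essentially two lines: it invokes Lemma~\ref{opposite_pair} to conclude at once that $\{y,z\}$ is an opposite pair to $p_0$ (since $y,z$ lie outside the star at $p_0$), and then observes that $p_0yz$, the tail of the $p_0$--$x$ path from that pair onward to $x$, and the hyperedge $p_0q_0x$ close up into a linear cycle. Your Case~1 subcase $\{y,z\}=\{r_i,u_i\}$ is exactly this construction; your remaining cases --- where $\{y,z\}$ is \emph{not} an opposite pair --- are precisely the configurations Lemma~\ref{opposite_pair} already excludes, and the cycles you build there (running back toward $p_0$ through $g_1,\dots$ rather than forward through $p_0q_0x$) amount to re-proving that lemma in this special setting. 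Regarding what you flagged as the main obstacle, the structural claim $g_1=p_0q_0p_1$ and $u_1=p_1$: this is indeed needed (the paper uses it tacitly when asserting ``$yz$ does not intersect the star at $p_0$''), but it drops out immediately from the maximality of $k$ --- any second hyperedge of $T$ through $p_0$, or through $q_0$, could be prepended to $\P$ to give a linear path of length $k+1$ --- so the detour via $p_0q_0x$ in your first paragraph is not really necessary. In short, your approach buys self-containment; the paper's buys brevity by leaning on the opposite-pair machinery it has already set up.
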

\begin{proof}
Suppose for a contradiction that $p_0yz \in E(H)$ for some $y,z\in V(\P') \setminus \{p_1\}$. Since $yz$ does not intersect the star at $p_0$, by Lemma \ref{opposite_pair}, $yz$ is a pair opposite to $p_0$ in $T$. Then it is easy to see that $p_0yz$, $p_0q_0x$ and the linear path between the pair $\{y,z\}$ and $x$ in $T$ form a linear cycle, a contradiction.
\end{proof}

We are now ready to prove Lemma \ref{no_universalpair}. We divide its proof into two cases depending on whether the length of a longest linear path in $H$ is at least 3 or at most 2 (in Section \ref{kge3} and Section \ref{kleq2} respectively).

\subsubsection{The length of a longest linear path in $H$ is at least 3}
\label{kge3}

 Let $k$ be the length of the longest linear path in $H$. 

\begin{definition}[windmills]
Given a linear path $\{p_0q_0p_1, p_1q_1p_2, p_2q_2p_3, \ldots, p_{k-2}q_{k-2}p_{k-1}, p_{k-1}q_{k-1}p_k\}$ of length $k$ in $H$ and a skeleton containing it, the set of hyperedges of this skeleton which contain $p_1$ (respectively $p_{k-1}$) except $p_1q_1p_2$ (respectively except $p_{k-2}q_{k-2}p_{k-1}$) is called as a \emph{windmill} at $p_1$ (respectively $p_{k-1}$) and the size of this set is called the size of the windmill. In other words, windmill at $p_1$ is a star at $p_1$ minus the hyperedge $p_1q_1p_2$ (and similarly, windmill at $p_{k-1}$ is a star at $p_{k-1}$ minus the hyperedge $p_{k-2}q_{k-2}p_{k-1}$).

So there are two windmills corresponding to a linear path of length $k$ and a skeleton containing it. The windmill of smaller size among the two is referred as the smaller windmill. If they are of same size, then either one can be considered as the smaller windmill. 

Note that as we assumed $k \ge 3$, the two windmills do not have any hyperedges in common.
\end{definition}

Among all skeletons that contain a linear path of length $k$, let us consider skeletons that are of maximum possible size (so preparatory lemmas of the previous section can still be applied). Now among these skeletons let us choose a skeleton $T$ and a linear path $\P$ of length $k$ in $T$ such that the size of the smaller windmill corresponding to $T$ and $\P$ is minimum.  Let $\P = \{p_0q_0p_1, p_1q_1p_2, p_2q_2p_3, \ldots, p_{k-1}q_{k-1}p_k\}$. Without loss of generality, we may assume that the smaller windmill is at $p_1$.

\vspace{2mm}

We distinguish two cases depending on the size of the smaller windmill corresponding to $T$ and $\P$.

\begin{namedthm*}{Case 1}
The size of the smaller windmill (corresponding to $T$ and $\P$) is at least $2$.
\end{namedthm*}

We will show that the degree of $p_0$ is at most $\abs{V(H)}-2 = n-2$.

 If $x$ is in $V(T) \setminus \{p_1, p_0, q_0\}$, then we claim that $p_0q_0x \not \in E(H)$ because if $x$ is in the windmill around $p_1$ then the linear path $\P$ can be extended. If $x$ is not in the windmill around $p_1$ then by replacing the hyperedge $p_0q_0p_1$ with $p_0q_0x$ will decrease the size of the smaller windmill (and the length of the longest linear path in the skeleton does not decrease) contradicting the assumption that the size of the smaller windmill is minimum.

So the hyperedges in $V(T)$ containing $p_0$ are of the type $p_0p_1x$ where $x \in V(T) \setminus \{q_0\}$ or of the type $p_0xy$ where $x, y \in V(T) \setminus \{p_1, q_0\}$ plus the hyperedge $p_0q_0p_1$. Below we will count the number of hyperedges of these two types separately.

First, let us count the number of hyperedges of the type $p_0p_1x$ where $x \in V(T) \setminus \{q_0\}$. Since $p_0p_1$ can't be opposite to any $x \in V(T) \setminus \{q_0\}$, by Lemma \ref{opposite_pair}, $p_0p_1$ must intersect the star at $x$. This means that $x$ should be contained in the star at $p_1$. So the number of hyperedges of the type $p_0p_1x$ where $x \in V(T) \setminus \{q_0\}$ is $2w_1$ where $w_1$ is the size of the windmill at $p_1$ (note that by definition, windmill at $p_1$ does not contain the edge $p_1q_1p_2$). Let $w_2$ be the size of the windmill at $p_{k-1}$ (So $w_1 \le w_2$ by our assumption).

Now, let us count the number of hyperedges of the type $p_0xy$ where $x, y \in V(T) \setminus \{p_1, q_0\}$. Since $xy$ doesn't intersect the star at $p_0$, by Lemma \ref{opposite_pair}, $xy$ must be opposite to $p_0$. If the pair $xy$ is contained in a hyperedge of either windmill (at $p_1$ or $p_{k-1}$) then we can extend $\P$ by $p_0xy$, a contradiction. 

 So the number of such $xy$ pairs is at most $$\frac{V(T) - (2w_1+1)- 2w_2}{2} = \frac{(n-t)- (2w_1+1)- 2w_2}{2}.$$

Therefore, the total degree of $p_0$ in the subhypergraph induced by $V(T)$, $$d_T(p_0) \le 1 + 2w_1 + \frac{(n-t)- (2w_1+1)- 2w_2}{2}.$$ Thus by Corollary \ref{outside_degree}, the degree of $p_0$ is at most $$1 + 2w_1 + \frac{(n-t)- (2w_1+1)- 2w_2}{2} + t +1 =  \frac{n +t + 2w_1- 2w_2 + 3}{2} \le \frac{n +t + 3}{2}.$$
So we are done unless $ \frac{n +t + 3}{2} \ge n-1$, which simplifies to $n-t = \abs{V(T)} \le 5$, so $T$ contains at most 2 hyperedges. Therefore the length of $\P$ is at most 2 (recall that $\P$ is contained in $T$). However, as $\P$ is a longest linear path in $H$,  this contradicts the assumption of Section \ref{kge3}.

\begin{namedthm*}{Case 2}
	The size of the smaller windmill (corresponding to $T$ and $\P$) is $1$.
\end{namedthm*}

There are three types of hyperedges in $H$ that contain $p_0$: hyperedges of the type $p_0q_0x$ where $x \in V(H) \setminus \{p_0, q_0\}$, hyperedges of the type $p_0yz$ or of the type $p_0p_1w$ where $y, z, w \in V(H) \setminus \{p_0, q_0, p_1\}$. (Note that we consider the hyperedge $p_0q_0p_1$ as of the type $p_0q_0x$.)

Let $r$ be the number of hyperedges in $H$ of the type $p_0q_0x$ where $x \in V(H) \setminus \{p_0, q_0\}$ and let $s$ be the number of hyperedges in $H$ of the type $p_0yz$ where $y,z \in V(H) \setminus \{p_0, q_0, p_1\}$. Below we upper bound the number of hyperedges  of these two types together.

\begin{claim}
	\label{rs}
	$r + s \le n -2$ and if equality holds then $p_0p_kq_{k-1} \in E(H)$.
\end{claim}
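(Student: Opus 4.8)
The plan is to bound $r+s$ by a careful injection/counting argument that charges each relevant hyperedge through $p_0$ to a distinct vertex of $V(H)\setminus\{p_0,q_0\}$, and then to analyze the equality case. First I would handle the $r$ hyperedges of type $p_0q_0x$: these are counted by the distinct vertices $x\in V(H)\setminus\{p_0,q_0\}$ appearing as the third vertex, so there are at most $n-2$ of them outright — the content of the claim is really that the $s$ hyperedges of type $p_0yz$ with $y,z\notin\{p_0,q_0,p_1\}$ can be absorbed into the same count without exceeding $n-2$, i.e. roughly $s$ of the ``slots'' $x\in V(H)\setminus\{p_0,q_0,p_1\}$ that are \emph{not} used by an $r$-type hyperedge must in fact be unusable. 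So the real task is: if $x\in V(H)\setminus\{p_0,q_0,p_1\}$ is such that $p_0q_0x\notin E(H)$, then $x$ lies in at most ``its fair share'' of $p_0yz$ hyperedges, and globally these contribute at most the number of such free slots.

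The key structural input is that the smaller windmill at $p_1$ has size $1$, so the only hyperedge of $T$ through $p_1$ other than $p_0q_0p_1$ is $p_1q_1p_2$; combined with Lemma \ref{opposite_pair}, any hyperedge $p_0yz$ with $y,z\in V(T)\setminus\{p_1,q_0\}$ has $\{y,z\}$ opposite to $p_0$, hence lies one ``level'' into $T$ away from $p_0$, and by the windmill-minimality and longest-path assumptions such a pair cannot sit inside either windmill (else $\mathcal P$ extends — exactly the argument used in Case 1). For vertices outside $V(T)$, Lemma \ref{onestrong_allweak} and Corollary \ref{outside_degree} control how many $p_0$-hyperedges can reach them: each $x\in V(H)\setminus V(T)$ contributes at most one hyperedge $p_0\cdot x$ except for at most one exceptional $x$ (the one strongly associated to $p_0q_0p_1$, if any), and Lemma \ref{prison} rules out $p_0$-hyperedges landing on the interior of the relevant linear paths. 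I would organize the count as: hyperedges through $p_0$ land either on $q_0$ (these are the $r$-type, $\le n-2$ but I will only need the ones not double-counted), on a pair opposite to $p_0$ inside $V(T)$ (controlled by the windmill/opposite-pair structure), or on a vertex of $V(H)\setminus V(T)$ (controlled by Corollary \ref{outside_degree}); each category injects into a distinct vertex of $V(H)\setminus\{p_0,q_0\}$, giving $r+s\le n-2$.

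For the equality case I would trace which inequalities were tight: equality forces every vertex of $V(H)\setminus\{p_0,q_0\}$ to be ``hit'' in exactly one of the above ways, and in particular $p_k$ — which, as the far endpoint of the longest path $\mathcal P$, cannot be hit by a $q_0$-hyperedge (that would create a linear cycle or extend $\mathcal P$) and cannot be hit by an ``opposite pair inside a windmill'' hyperedge — must be hit by a hyperedge $p_0 p_k z$ where $\{p_k,z\}$ is opposite to $p_0$; pushing this through, the unique candidate forced by the path structure near the far end is $z=q_{k-1}$, so $p_0p_kq_{k-1}\in E(H)$.

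I expect the main obstacle to be making the charging scheme genuinely injective across the three categories simultaneously — especially separating ``$p_0$-hyperedges hitting $V(T)$ on an opposite pair'' from ``$p_0$-hyperedges hitting $V(H)\setminus V(T)$'' without losing a constant, and verifying that the windmill-at-$p_{k-1}$ side does not secretly create extra hyperedges through $p_0$ that the size-$1$ hypothesis at $p_1$ fails to suppress. The equality analysis isolating $p_0p_kq_{k-1}$ should then be a short rigidity argument once the counting is tight, using the longest-path property of $\mathcal P$ to kill every alternative for how $p_k$ gets charged.
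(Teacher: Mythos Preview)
Your proposal has the right ingredients (Lemma~\ref{opposite_pair} and Lemma~\ref{prison}) but misses two essential points, and the charging scheme as described will not close.

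First, you never invoke the hypothesis of Lemma~\ref{no_universalpair}: there is no universal pair, so in particular $\{p_0,q_0\}$ is not one, giving $r\le n-3$. The paper's argument proves the cleaner inequality $r+2s\le n$ (by showing that the $s$ opposite pairs all lie on $\mathcal P$, are pairwise disjoint, and---via Lemma~\ref{prison}---overlap the set $\{x:p_0q_0x\in E(H)\}$ in at most the two vertices of the closest opposite pair). This yields $r+s\le n-s$, which finishes the claim when $s\ge 2$; but when $s\le 1$ you genuinely need $r\le n-3$ to conclude $r+s\le n-2$. Your injection outline gives no substitute for this. Equally, in the equality analysis the first step is precisely that $r\le n-3$ forces $s\ge 1$, i.e.\ an $s$-type hyperedge exists---without this you cannot even begin to argue that $p_0p_kq_{k-1}\in E(H)$.

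Second, two structural claims in your sketch are off. Hyperedges $p_0yz$ with $y,z\notin\{q_0,p_1\}$ already have $y,z\in V(T)$ by Lemma~\ref{opposite_pair} (its proof handles the outside-$T$ cases), so your ``third category'' of $p_0$-edges landing on $V(H)\setminus V(T)$ contributes nothing to $s$; those edges are either $r$-type or $p_0p_1w$-type, which the claim does not count. And in the equality case your assertion that ``$p_k$ cannot be hit by a $q_0$-hyperedge (that would create a linear cycle or extend $\mathcal P$)'' is not correct: $p_0q_0p_k$ creates neither a linear cycle nor a longer path by itself, and indeed later in Case~2.1 the paper derives $p_0q_0p_k\in E(H)$ in a symmetric situation. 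The actual equality argument is: since $s\ge 1$, pick an $s$-type hyperedge $p_0yz$; if $\{y,z\}\ne\{p_k,q_{k-1}\}$ then Lemma~\ref{prison} (applied with $x=p_k$ and $x=q_{k-1}$) forces $p_k,q_{k-1}$ to be missed by all $r$-type and $s$-type hyperedges, which knocks the count down to $r+2s\le n-2$, contradicting $r+s=n-2$ with $s\ge 1$.
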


\begin{proof}
First we claim that $r + s \le n - s$. Consider a hyperedge of the type $p_0yz$ where $y,z \in V(H) \setminus \{p_0, q_0, p_1\}$. Since $\{y,z\}$ doesn't intersect the star at $p_0$, by Lemma \ref{opposite_pair}, the pair $\{y,z\}$ is opposite to $p_0$. We claim that if $p_0yz \in E(H)$ then the pair $\{y,z\}$ must be contained in the linear path $\P$. It is easy to see that since $\{y,z\}$ is opposite to $p_0$, either both $y$ and $z$ are contained in $\P$ or both of them are not in $\P$. In the latter case, $\P$ can be extended by adding the hyperedge $p_0yz$, contradicting the maximality of $\P$. So $y$ and $z$ are contained in $\P$. Now consider the opposite pair $\{y_1,z_1\}$ closest (in the sense of distance defined in Definition \ref{opposite_pair_def}) to $p_0$ in $\P$ such that $p_0y_1z_1 \in E(H)$. 
By Lemma \ref{prison}, the farthest $x \in \P$ from $p_0$ such that $p_0q_0x \in E(H)$ can be either $y_1$ or $z_1$. This means that a vertex in $V(H) \setminus \{p_0, q_0\}$ can not be contained in both a hyperedge of type $p_0q_0x$ and a hyperedge of type $p_0yz$ except for the vertices $y_1, z_1$. Since the hyperedges of the type $p_0yz$ cover $2s$ vertices from $V(H) \setminus \{p_0, q_0\}$ and hyperedges of the type $p_0q_0x$ cover $r$ vertices from $V(H) \setminus \{p_0, q_0\}$, we have $r + 2s \le n-2 + 2 = n$, proving that $r + s \le n - s$.

Since $r + s \le n - s$, Claim \ref{rs} is proved if $s \ge 2$ and so we can assume $s \le 1$. Recalling the assumption of Lemma \ref{no_universalpair}, there are no vertices $u, v \in V(H)$ such that $uvx \in E(H)$ for every $x \in V(H) \setminus \{u,v\}$, so we have $r \le n-3$. Thus, $$r + s \le n -3 +1 = n-2,$$ as desired.

Now let us observe what happens when $r+s = n-2$. Then we must have $s \ge 1$ (as $r \le n-3$). That is, there exists a hyperedge of the type $p_0yz$ where $y,z \in V(H) \setminus \{p_0, q_0, p_1\}$. The pair $\{y,z\}$ must be opposite to $p_0$ and is contained in $\P$ as before. So if $\{y,z\} \not = \{p_k,q_{k-1}\}$ then by Lemma \ref{prison}, $p_0q_0p_k, p_0q_0q_{k-1} \not \in E(H)$ (here we used the existence of an edge of the type $p_0yz$ and that $p_k$ and $q_{k-1}$ are further than $y, z$). So the vertices $p_k, q_{k-1}$ do not belong to any hyperedge of type $p_0q_0x$ or $p_0yz$. So, by a similar argument as in the previous paragraph, $r + 2s \le n-4 + 2 = n-2$ which is a contradiction since we assumed $r + s = n-2$ and $s \ge 1$.
\end{proof}

We distinguish two subcases based on the existence of a hyperedge of certain type.

\begin{namedthm*}{Case 2.1}
There is a hyperedge of type $p_0q_0x \in E(H)$ where $x \in V(T) \setminus \{ p_0,p_1,p_2,q_0,q_1\}$.
\end{namedthm*}

In this case, we claim that number of hyperedges of the type $p_0p_1y$ in $H$ where $y \in V(H) \setminus \{p_0, q_0, p_1\}$ is at most $1$ and if such a hyperedge exists then $y$ is either $p_2$ or $q_1$. Assume that $p_0p_1y' \in E(H)$ where $y' \in V(H) \setminus \{p_0, q_0, p_1\}$. Let $\P_1$ be a linear path in $T$ between (and including) $x$ and $p_1$. If $y' \not \in \P_1$, then $p_0q_0x$, $p_0p_1y'$ and $\P_1$ form a linear cycle. So $y' \in \P_1$. Since $\{p_0, p_1\}$ cannot be an opposite pair of any vertex on $\P_1$ except $q_0$, by Lemma \ref{opposite_pair}, $\{p_0, p_1\}$ must intersect the star at $y'$. So $y'$ is either $p_2$ or $q_1$. If both hyperedges $p_0p_1p_2$ and $p_0p_1q_1$ are in $H$ then $p_0q_0x$, $\P_1 \setminus \{p_1p_2q_1\}$ and either $p_0p_1p_2$ (in case $p_2$ is on the path $\P_1 \setminus \{p_1p_2q_1\}$) or $p_0p_1q_1$ (in case $q_1$ is on the path $\P_1 \setminus \{p_1p_2q_1\}$) form a linear cycle. Therefore the desired claim follows.

If neither of the hyperedges $p_0p_1p_2$, $p_0 p_1q_1$ are in $H$, then the degree of $p_0$ is $r+s$ and by Claim \ref{rs}, $r+s \le n-2$ and so Lemma \ref{no_universalpair} holds. Therefore, from now on, we may assume that exactly one of the two hyperedges $p_0p_1p_2$, $p_0 p_1q_1$ is in $H$.
If $r + s$ is strictly less than $n-2$ then degree of $p_0$ is at most $n-2$ and Lemma \ref{no_universalpair} holds again. So we also assume that $r+s=n-2$. By Claim \ref{rs} if $r+s=n-2$, then $p_0p_kq_{k-1} \in E(H)$. It follows that the size of the windmill at $p_{k-1}$ is $1$ because if it is more than $1$, then the linear path consisting of $p_0p_kq_{k-1}$, $\P \setminus p_{k-1}p_kq_{k-1}$ and one of the hyperedges of the windmill at $p_{k-1}$ (different from $p_{k-1}p_kq_{k-1}$) form a linear path longer than $\P$, a contradiction. Therefore the size of the windmills at $p_{k-1}$ and $p_1$ are both $1$.  By symmetry, if we define $r'$ and $s'$ for $p_k$ analogous to how we defined $r$ and $s$ for $p_0$, Claim \ref{rs} holds for them. Since a hyperedge of the type $p_kq_{k-1}x$ exists where $x \in V(T) \setminus \{p_k,q_{k-1}, p_{k-1}, q_{k-2}, p_{k-2}\}$ (namely $p_kq_{k-1}p_0$), by repeating the same argument as before we can assume that $r' + s' = n - 2$ and so $p_0q_0p_k \in E(H)$. Using Lemma \ref{prison} for $p_k$ (instead of $p_0$), 
it is easy to see that $s' \le 1$. So $r' \ge n-3$. We know that $p_0p_1y \in E(H)$ where $y$ is either $p_2$ or $q_1$. Now $p_0p_1y$, $p_0q_0p_k$ and either $p_kq_{k-1}y$ or $p_kq_{k-1}p_1$ (one of them exists because $r' \ge n-3$) form a linear cycle, a contradiction.

\begin{namedthm*}{Case 2.2}
There is no hyperedge of type $p_0q_0x \in E(H)$ where $x \in V(T) \setminus \{ p_0,p_1,p_2,q_0,q_1\}$.
\end{namedthm*}

Let $d_0$ be the degree of $p_0$ in the subhypergraph of $H$ induced by $\{ p_0,p_1,p_2,q_0,q_1\}$. Clearly $d_0 \le 6$. If $p_k q_{k-1}p_0$ or $p_k q_{k-1}q_0$ are in $H$, then the size of the windmill at $p_{k-1}$ is $1$, otherwise the linear path consisting of $p_kq_{k-1}p_0$, $\P \setminus p_{k-1}p_kq_{k-1}$ and one of the hyperedges of the windmill at $p_{k-1}$ form a linear path longer than $\P$, a contradiction. So by symmetry (renaming $p_i$ to $p_{k-i}$ for each $0 \le i \le k$ and $q_i$ to $q_{k-1-i}$ for each $0 \le i \le k-1$) we are done by Case 2.1. Thus we can assume 
\begin{equation}
\label{recall}
p_k q_{k-1}p_0, p_k q_{k-1}q_0 \not \in E(H).
\end{equation}

If there is a vertex $v \in V(H) \setminus V(T)$ which is strongly associated to $p_0q_0p_1$, then we claim that $d_0 \le 4$ because if either $p_0q_0p_2$ or $p_0q_0q_1$ is in $H$, then it is easy to check that we have a linear cycle. Let $\abs{V(H) \setminus V(T)} = t$. So the degree of $p_0$ in the subhypergraph of $H$  induced by $T$, $d_T(p_0) \le d_0 + \frac{n-t-7}{2}$ (here we used $p_k q_{k-1}p_0 \not \in E(H)$).  By Corollary \ref{outside_degree}, degree of $p_0$ is at most $$d_0 + \frac{n-t-7}{2} + t + 1 \le \frac{n+t+3}{2}.$$
Then, Lemma \ref{no_universalpair} holds unless $\frac{n+t+3}{2} \ge n-1$ which simplifies to $n-t = \abs{V(T)} \le 5$, so $T$ contains at most 2 hyperedges. Therefore the length of $\P$ - a longest linear path of $H$- is also at most 2 contradicting the assumption of Section \ref{kge3}.

If there is no vertex $v \in V(H) \setminus V(T)$ which is strongly associated to $p_0q_0p_1$, then degree of $p_0$ is at most $d_T(p_0) + t$. And, since $d_T(p_0) \le d_0 + \frac{n-t-7}{2} $, the degree of $p_0$ is at most
$$d_0 + \frac{n-t-7}{2} + t \le d_0 + \frac{n+t-7}{2},$$

and Lemma \ref{no_universalpair} holds unless $d_0 + \frac{n+t-7}{2} \ge n-1$ which simplifies to 
\begin{equation}
\label{eq:d0}
d_0 \ge \frac{n-t+5}{2}.
\end{equation}

 If $n-t > 7$ then $d_0 > 6$ which is impossible. So we may assume $n- t \le 7$. If $n-t = \abs{V(T)} \le 5$ then $T$ contains at most two hyperedges, so the length of $\P$ is also at most 2 contradicting the assumption of Section \ref{kge3}. Since $n-t$ is odd (as the number of vertices in a skeleton is always odd) the only remaining case is when $n-t = 7$. In this case the size of the skeleton $T$ is $3$ and since $T$ contains a linear path of length at least 3 (as we are in Section  \ref{kge3}), $T$ is a linear path of length exactly $3$ (i.e., $T$ and $\P$ contain the same set of hyperedges). Thus $k = 3$. Moreover, by \eqref{eq:d0}, $d_0 \ge 6$. However, since $d_0 \le 6$, we have  $d_0 = 6$. By symmetry, the degree of $q_0$ in the subhypergraph induced by $\{ p_0,p_1,p_2,q_0,q_1\}$ is also $6$.  
 This implies that 
 \begin{equation}
 \label{butterfly}
q_0p_1q_1, p_0p_1p_2, p_0p_2q_1 \in E(H).
 \end{equation}
 
By \eqref{recall}, we can assume $p_3q_2p_0, p_3q_2q_0 \not \in E(H)$. Recall that $p_3p_0q_0 \not \in E(H)$. Therefore, any hyperedge containing $p_3$ in the subhypergraph induced by $V(T)$ is contained in $\{ p_3,q_2,p_2,q_1,p_1\}$. However, by \eqref{butterfly} the two hyperedges $p_3p_2p_1, p_3p_2q_1 \not \in E(H)$, since otherwise we can find a linear cycle in $H$. Therefore, the degree of $p_3$ in the sybhypergraph induced by $V(T)$ is $d_{T}(p_3) \le 6-2 = 4$. Thus, by Corollary \ref{outside_degree}, degree of $p_3$ is at most $d_T(p_3)+\abs{V(H) \setminus V(T)}+1 \le 5 + \abs{V(H) \setminus V(T)} =  \abs{V(H)} - 2$, as desired, finishing the proof of this case.



\subsubsection{The length of a longest linear path in $H$ is at most 2}
\label{kleq2}

Let $k$ be the length of the longest linear path in $H$. So $k \le 2$.
Among all skeletons that contain a linear path of length $k$, let $T$ be a skeleton of maximum possible size.  

As the length of a longest linear path in $H$ is at most $2$, it is easy to see that all of the hyperedges in $T$ share a common vertex, $b$. We consider the following three cases depending on the number of hyperedges in $T$. 

\begin{namedthm*}{Case 1}
	\label{atleast5}
$T$ consists of at least 3 hyperedges.
\end{namedthm*}
Let $E(T) = \{v_1v_2b$, $v_3v_4b, \ldots, v_{2s-1}v_{2s}b\}$.  
\begin{claim}
\label{vijk}
$v_iv_jv_k \not \in E(H)$ for any $i, j, k \in \{1,2,\ldots,2s\}$. Thus, every hyperedge in the subhypergraph induced by $V(T)$ must contain $b$.
\end{claim}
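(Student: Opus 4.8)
The plan is to show that a hyperedge $v_iv_jv_k$ inside $V(T)$ with all three vertices distinct from $b$ would, together with the hyperedges of $T$, produce a linear path longer than $k$ (or a linear cycle), contradicting the choice of $k$ and $T$. First I would dispose of the case where $i,j,k$ all come from distinct hyperedges of $T$, say $v_i \in \{v_1,v_2\}$, $v_j \in \{v_3,v_4\}$, $v_k \in \{v_5,v_6\}$ (this uses that $T$ has at least $3$ hyperedges). Then the hyperedge $v_iv_jv_k$ shares exactly one vertex with each of the three hyperedges $v_1v_2b$, $v_3v_4b$, $v_5v_6b$ and those three share only $b$ pairwise; picking the ``other'' endpoints appropriately gives a linear path of length $3 > 2 \ge k$, contradicting maximality of $k$. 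Actually one must be slightly careful: to get a genuine linear path one walks, e.g., (the neighbour of $v_i$ in its $T$-edge) — $v_1v_2b$ — $v_iv_jv_k$ — $v_3v_4b$ — (the neighbour of $v_j$), which is already length $3$; since $k \le 2$ this is the contradiction. Alternatively, if one prefers, $v_iv_jv_k$ together with $v_iv_{i'}b$ and $v_jv_{j'}b$ (where $v_{i'},v_{j'}$ are the partners of $v_i,v_j$) forms a linear triangle, hence a linear cycle of length $3$, also a contradiction.

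Next I would handle the case where two of $i,j,k$ lie in the same hyperedge of $T$, say $\{v_i,v_j\} = \{v_1,v_2\}$ and $v_k \in \{v_3,v_4\}$, say $v_k = v_3$. Then $v_1v_2v_3$, together with $v_3v_4b$ (sharing $v_3$) and $v_1v_2b$ (sharing... ) — here I need to be careful because $v_1v_2v_3$ and $v_1v_2b$ share the pair $\{v_1,v_2\}$, which is fine for a linear \emph{cycle} argument but not directly a linear path. The clean way: $v_1v_2v_3$, $v_3v_4b$, and $v_5v_6b$ (using the third hyperedge, which exists since $s \ge 3$) — the first two share only $v_3$, and $v_5v_6b$ shares only $b$ with $v_3v_4b$ and is disjoint from $v_1v_2v_3$; so $v_1 - v_1v_2v_3 - v_3v_4b - v_5v_6b - v_5$ is a linear path of length $3$, contradiction. (If $v_k=v_3$, replace $b$'s partner edge $v_3v_4b$ accordingly; the argument is symmetric in the labelling.) Finally, the degenerate possibility that $v_iv_jv_k$ repeats a vertex is excluded since it is a genuine $3$-element hyperedge, and the possibility $v_iv_jv_k$ uses only two of the listed vertices is impossible as $v_iv_jv_k$ has three distinct vertices all in $\{v_1,\ldots,v_{2s}\}$.

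The main obstacle is bookkeeping: one must make sure that in each case the constructed sequence of three hyperedges is a genuine \emph{linear} path — consecutive hyperedges meet in exactly one vertex and non-consecutive ones are disjoint — which relies essentially on the fact that the only vertex shared by two distinct hyperedges of $T$ is $b$, and that $b \notin \{v_i,v_j,v_k\}$ by hypothesis. Since $s \ge 3$ there is always a ``spare'' hyperedge $v_{2\ell-1}v_{2\ell}b$ of $T$ disjoint from $v_iv_jv_k$ except possibly at vertices already accounted for, which is exactly what makes the length-$3$ path go through. Having established that no such hyperedge $v_iv_jv_k$ exists, the second sentence of the claim is immediate: any hyperedge contained in $V(T)$ has all its vertices in $\{b, v_1,\ldots,v_{2s}\}$, and if it avoided $b$ it would be of the forbidden form $v_iv_jv_k$; hence it contains $b$.
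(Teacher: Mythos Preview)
Your approach matches the paper's: exhibit a linear cycle when $v_i,v_j,v_k$ lie in three distinct hyperedges of $T$, and a linear path of length $3$ (using a third hyperedge of $T$) when two of them share a hyperedge of $T$. One slip to fix: in the three-hyperedges case your proposed path $v_1v_2b,\; v_iv_jv_k,\; v_3v_4b$ is \emph{not} a linear path, since the non-consecutive hyperedges $v_1v_2b$ and $v_3v_4b$ both contain $b$; but precisely this triple of hyperedges is the linear triangle you offer as your ``alternative,'' and that is the correct argument (and the one the paper uses).
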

\begin{proof}
Indeed, if $v_i, v_j, v_k$ belong to three different hyperedges of $T$, then it is easy to find a linear cycle, so suppose two of them belong to the same hyperedge. Without loss of generality, let $\{v_i, v_j\}= \{v_1,v_2\}$. Then replacing $v_1v_2b$ with $v_iv_jv_k$ we can produce a linear path of length 3 in $H$ (here we used that $T$ contains at least 3 hyperedges), a contradiction to the assumption of Section \ref{kleq2}; proving the claim.
\end{proof} 

Now we consider two subcases.

\begin{namedthm*}{Case 1.1}
	\label{oneismissing}
	There exist $i', j' \in \{1,2,3,\ldots,2s\}$ such that $v_{i'}bv_{j'} \not \in E(H)$.
\end{namedthm*}

By Claim \ref{vijk}, every hyperedge in the subhypergraph induced by $V(T)$ must be of the form $v_ibv_j$ for some $i, j \in \{1,2,3,\ldots,2s\}$. 
Therefore, as $v_{i'}bv_{j'} \not \in E(H)$, the degree $d_T(v_{i'})$ of $v_{i'}$ in the subhypergraph induced by $V(T)$ is at most $\abs{V(T)}-3$. Thus by Corollary \ref{outside_degree}, the degree of $v_{i'}$ is at most $\abs{V(T)}-3 +\abs{V(H) \setminus V(T)}+1 = \abs{V(H)}-2$, and we are done.

\begin{namedthm*}{Case 1.2}
	\label{nothing_is_missing}
	$v_ibv_j \in E(H)$ for every $i, j \in \{1,2,3,\ldots,2s\}$.
\end{namedthm*}

Consider a vertex $v_i$ with $i \in \{1,2,3,\ldots,2s\}$. By Claim \ref{vijk}, degree of $v_i$ in the subhypergraph induced by $V(T)$ is $\abs{V(T)}-2$.

Note that there is no hyperedge of the form $v_ixy$ where $x, y \in V(H) \setminus V(T)$ because of the maximality of $T$. Moreover, there is no hyperedge of the form $v_iv_jx$ where $x \in V(H) \setminus V(T)$ and $j \in \{1,2,3,\ldots,2s\}$. Indeed, the hyperedges $v_iv_jx$, $v_ibv_{i'}$, $v_jbv_{j'}$ for any two distinct vertices $i', j'$ with $i', j' \in \{1,2,3,\ldots,2s\} \setminus \{i, j\}$ form a linear cycle. Therefore, any hyperedge containing $v_i$, and a vertex $x \in V(H) \setminus V(T)$, must be of the form $v_ixb$. 

Therefore the total degree of $v_i$ is at most $(\abs{V(T)}-2) + \abs{V(H) \setminus V(T)} = \abs{V(H)}-2$, as desired.

\begin{namedthm*}{Case 2}
	\label{exactly2}
	$T$ consists of exactly 2 hyperedges.
\end{namedthm*}
Let $E(T) =\{a_1a_2b, c_1c_2b \}$. Since $\abs{V(H)} \ge 6$, $\abs{V(H) \setminus V(T)} \not = \emptyset$. We consider the following two subcases.
\begin{namedthm*}{Case 2.1}
	There is no vertex in $V(H) \setminus V(T)$ which is strongly associated to any hyperedge of $T$.
\end{namedthm*}

First suppose $\abs{V(H)} \le 7$. Consider a vertex $v \in V(H) \setminus V(T)$. It is easy to see that  if $vxy$ is a hyperedge, then $x, y \in V(T)$. Moreover, $x,y$ are contained in a hyperedge of $T$, so $v$ is associated to a hyperedge of $T$. Since $v$ is not strongly associated to any hyperedge of $T$ and $T$ has only 2 hyperedges, it follows that $v$ has degree at most $2 \le \abs{V(H)}-2$ (since $\abs{V(H)} \ge 6$) as required.

So we can assume  $\abs{V(H)} \ge 8$. Suppose there is no hyperedge of the form $vxy$ with $v \in V(H) \setminus V(T)$ and $x, y \in V(T)$. Then it is easy to see that any hyperedge of $H$ which contains $a_1 \in V(T)$ is contained in $V(T)$, so degree of $a_1$ is at most $6 \le \abs{V(H)}-2$, as desired. So we can assume that there exists a hyperedge $vx'y'$ with $v \in V(H) \setminus V(T)$ and $x', y' \in V(T)$. It follows that $x', y'$ are contained in a hyperedge of $T$. By assumption $v$ is not strongly associated to any hyperedge of $T$.  So the number of hyperedges $vxy$ such that $x, y \in V(T)$ is at most $2$ (as there are only two hyperedges in $T$). 

Now we upper bound the number of hyperedges $vxy$ such that $x, y \in V(H) \setminus V(T)$. Let us define a (trace) graph $G_v$ on the vertex set $V(G_v) := V(H) \setminus (V(T) \cup \{v\})$ where $ab \in E(G_v)$ if and only if $abv \in E(H)$. Now notice that if there are two edges $pq, rs \in E(G_v)$ that are disjoint then $vpq, vrs, vx'y' \in E(H)$ form a skeleton with 3 hyperedges which contradicts the assumption of Case 2. So every two edges of $G_v$ have a common vertex and so $E(G_v)$ is either a triangle or a star. In either case, $\abs{E(G_v)} \le \abs{V(G_v)}$. 

Therefore, the total degree of $v$ is at most $2 + \abs{E(G_v)} \le 2 + \abs{V(G_v)} = 2 + n - 6 = n - 4$, as desired.

\begin{namedthm*}{Case 2.2}
	There is a vertex $v \in V(H) \setminus V(T)$ which is strongly associated to a hyperedge of $T$.
\end{namedthm*}
As before, notice that if $vxy$ is a hyperedge, then either $x, y \in V(T)$ or $x, y \in V(H) \setminus V(T)$. Assume without loss of generality that $v$ is strongly associated to $a_1a_2b$. So $vba_i \in E(H)$ for some $i \in \{1,2\}$ which implies that there is no hyperedge $vxy$ with $x, y \in V(H) \setminus V(T)$ because otherwise $vxy, vba_i, bc_1c_2$ form a linear path of length 3, a contradiction. Therefore, any hyperedge incident on $v$ is of the form $vxy$ with $x, y \in V(T)$. Moreover, the pair $xy$ is contained in a hyperedge of $T$.

If $v$ is not strongly associated to $bc_1c_2$, then the degree of $v$ is at most $1 + 3 = 4$ and we are done since we assumed $\abs{V(H)} \ge 6$. Therefore, we may assume $v$ is strongly associated to $bc_1c_2$ as well and so $vbc_j \in E(H)$ for some $j \in \{1,2\}$. Now it is easy to see that $a_1a_2c_k \not \in E(H)$ and $c_1c_2a_k \not \in E(H)$ for any $k \in \{1,2\}$ because otherwise we have a linear cycle. If there is a vertex among $\{a_1,a_2,c_1,c_2\}$ with degree at most $2$ in the subhypergraph induced by $V(T)$, then by Corollary \ref{outside_degree}, the degree of this vertex in $H$ is at most $2 + t+ 1 = t+3$ where $\abs{V(H) \setminus V(T)} = t$ but then we are done because $\abs{V(H)} = t+5$. So we may assume that all of the vertices $\{a_1,a_2,c_1,c_2\}$ have degree at least $3$ in the subhypergraph induced by $V(T)$. It is easy to see that the only way this degree condition is met for the vertex $a_i$ is if $a_ibc_1, a_ibc_2 \in E(H)$ for each $i \in \{1,2\}$. This implies that $a_1a_2v, c_1c_2v \not \in E(H)$ because otherwise we have a linear cycle. So the degree of $v$ is at most $4$ and we are done because $\abs{V(H)} \ge 6$.

\begin{namedthm*}{Case 3}
	\label{exactly2}
	$T$ consists of only one hyperedge.
\end{namedthm*}

Let $E(T) = \{abc\}$. Consider the trace graph $G_a$ where $\{x,y\} \in E(G_a)$ if and only if $axy \in E(H)$. Now notice that if there are two edges $pq, rs \in E(G_a)$ that are disjoint then $apq, ars \in E(H)$ form a skeleton with two hyperedges, a contradiction. So every two edges of $G_a$ have a common vertex. It is easy to see that the set of edges of such a graph is either a star (a graph where all the edges have a common vertex) or a triangle. Notice that there may be some isolated vertices in the graph. Since $\abs{V(G_a)} = \abs{V(H)}-1 \ge 5$, it is easy to see that $\abs{E(G_a)} \le \abs{V(G_a)}-1$ holds. So the degree of $a$ in $H$ is $\abs{E(G_a)} \le \abs{V(G_a)}-1 = \abs{V(H)}-2$ as desired.

\subsection{Proof of Theorem \ref{degree_condition}}
\label{universal_pair}

Now we prove Theorem \ref{degree_condition} using Lemma \ref{no_universalpair}. If we assume for a contradiction that Theorem \ref{degree_condition} does not hold, then by Lemma \ref{no_universalpair} we know that there are vertices $u, v \in V(H)$ such that $uvx \in E(H)$ for every $x \in V(H)$ whenever $\abs {V(H)} \ge 6$. 


\begin{lemma}
	\label{final_lemma}
Let $H$ be a $3$-uniform linear-cycle-free hypergraph. Let $s \ge -1$ be an integer. If the degree of every vertex in $H$ is at least $\abs{V(H)}+s$ and $\abs{V(H)} \ge 6$, then it has a subhypergraph $H_0$ such that $\abs{V(H_0)} = \abs{V(H)}-4$ and degree of every vertex in $H_0$ is at least $\abs{V(H_0)}+s+2$.
\end{lemma}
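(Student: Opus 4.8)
The plan is to use the minimum‑degree hypothesis to force a universal pair, read off the rigid structure it imposes, and then delete a well‑chosen set of four vertices.

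First, a universal pair is forced. Since $\abs{V(H)}\ge 6$ and every vertex has degree at least $\abs{V(H)}+s\ge\abs{V(H)}-1>\abs{V(H)}-2$, the contrapositive of Lemma \ref{no_universalpair} gives a pair $\{u,v\}$ with $uvx\in E(H)$ for all $x\in V(H)\setminus\{u,v\}$. Write $W=V(H)\setminus\{u,v\}$ and partition $E(H)$ according to how an edge meets $\{u,v\}$: the class $A$ of the $\abs{W}$ edges $uvx$; the class $B$ of edges meeting $\{u,v\}$ exactly in $u$; the class $C$ of edges meeting it exactly in $v$; and the class $D$ of edges disjoint from $\{u,v\}$. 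Counting edges through $u$ gives $\abs{B}=d_H(u)-\abs{W}\ge s+2\ge1$, and symmetrically $\abs{C}\ge1$; and for $w\in W$, with $d_B(w),d_C(w),d_D(w)$ the numbers of edges of $B,C,D$ through $w$, we have $d_H(w)=1+d_B(w)+d_C(w)+d_D(w)\ge\abs{V(H)}+s$. We will take $H_0=H[V(H)\setminus S]$ for a suitable $4$‑element set $S$; the required bound $d_{H_0}(w)\ge\abs{V(H_0)}+s+2=(\abs{V(H)}+s)-2$ is then precisely the statement that every $w\notin S$ lies in at most two edges of $H$ meeting $S$.

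The whole argument thus reduces to finding such an $S$. One first checks $\{u,v\}\subseteq S$: if, say, $v\in S$ but $u\notin S$, then $u$ lies in all $\abs{W}\ge4$ edges of $A$, each of which now meets $S$, so $u$ would lose at least $\abs{W}\ge4$ edges, and since $d_H(u)=\abs{W}+\abs{B}$ this is possible only if $\abs{B}$ is much larger than $s+2$ — a case that can be disposed of separately; and similarly neither of $u,v$ can be outside $S$. Hence $S=\{u,v,p,q\}$ with $p,q\in W$, and since $uvw\in A$ for every $w\in W\setminus\{p,q\}$, the pair $p,q$ must be chosen so that each such $w$ lies in at most one further edge meeting $\{u,v,p,q\}$ — that is, in at most one edge among the members of $B$ or $C$ through $w$ and the members of $D$ through $w$ that meet $\{p,q\}$. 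The structural heart of the proof is to establish that this is possible. Using the universal edges $uvx$ as closing edges of linear cycles, together with the basic exclusions (a linear path closed by a supporting edge is a linear cycle; Claim \ref{ass}, Lemma \ref{association}, Lemmas \ref{opposite_pair} and \ref{prison}), one shows, for instance, that if $uab\in B$ then every edge through $a$ contains $u$, $v$ or $b$ (otherwise $uab$, that edge, and a suitable $uvx$ form a linear triangle); iterating such statements forces $B$ and $C$ to be extremely rigid — essentially, all their edges pass through one distinguished vertex of $W$ — and forces the edges of $D$ to be organised around a pair of $W$ that can serve as $\{p,q\}$. With $\{p,q\}$ these distinguished vertices one verifies directly that each surviving $w$ loses only the two edges $uvw$ and $pqw$. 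I would expect the write‑up to split into cases according to $\abs{V(H)}$ (small values handled by hand) and according to which of the configurations $B=\{upq\}$, $C=\{vpq\}$, ``$pqx\in E(H)$ for all $x$'', etc.\ actually occur, with the possibilities $s\ge0$ collapsed using the counting inequality $2\abs{B}+2\abs{C}+3\abs{D}\ge(\abs{V(H)}-2)(\abs{V(H)}+s-1)$ against $\abs{E(H)}\le\binom{\abs{V(H)}-1}{2}$ (which already eliminates small $\abs{V(H)}$). Once $S$ is found, the degree bookkeeping above finishes the lemma immediately.

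The main obstacle is exactly this rigidity of $B$, $C$ and $D$: the universal pair alone does not confine these edges — the $9$‑vertex hypergraph of Remark \ref{9vertex} already has a large class $D$ — so one must combine it carefully with the full force of the minimum‑degree hypothesis, and essentially all of the casework lives in pinning down where the edges of $B$, $C$ and $D$ sit relative to $p$ and $q$.
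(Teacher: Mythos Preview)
Your strategy --- Lemma~\ref{no_universalpair} yields a universal pair $\{u,v\}$, one then locates a second pair $\{p,q\}\subset W$ with $pqw\in E(H)$ for all $w$, and deletes $S=\{u,v,p,q\}$ --- is exactly the paper's, and your central observation (if $uab\in E(H)$ with $a,b\in W$ then every edge through $a$ avoiding $\{u,v\}$ must contain $b$, else one closes a linear triangle with some $uvx$) is precisely the paper's Claim~\ref{yinab} and is the only structural fact needed.

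Where your sketch misfires is in the execution you anticipate. Lemmas~\ref{opposite_pair} and~\ref{prison} are about maximum skeletons and longest linear paths and play no role here; the counting inequality against $\binom{n-1}{2}$ is never used; the detour arguing that ``$\{u,v\}\subseteq S$'' is unnecessary (one simply exhibits the right $S$); and no case split on $|V(H)|$ or on ``which configurations occur'' is required. The paper's argument is short: pick any edge $x_1y_1u$ with $x_1,y_1\in W$ (one exists since $d(u)\ge n-1>|W|$) and show that the degree $d$ of $x_1$ in the trace graph $G_{u,v}$ on $W$ (with $pq$ an edge iff $pqu\in E(H)$ or $pqv\in E(H)$) is exactly~$1$. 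If $d\ge2$, one linear-triangle argument shows all trace-edges at $x_1$ come through the same one of $u,v$; then Claim~\ref{yinab} forces all trace-neighbours of $x_1$ into a single $D$-edge, giving $d\le2$ and $d_H(x_1)\le4$, impossible. With $d=1$, Claim~\ref{yinab} says every edge through $x_1$ other than $uvx_1$ contains $y_1$, and the degree hypothesis then forces $x_1y_1a\in E(H)$ for \emph{all} $a$; so $\{p,q\}=\{x_1,y_1\}$. A final check, now using both universal pairs as triangle-closers, rules out any edge through $a\in V(H_0)$ meeting $\{u,v,x_1,y_1\}$ other than $uva$ and $x_1y_1a$, so each surviving vertex loses exactly two.

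In short, the ``rigidity of $B,C,D$'' you flag as the main obstacle dissolves in a few lines once you fix a single vertex $x_1$ and argue on its trace-degree, rather than attempting a global description of $B\cup C\cup D$.
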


\begin{proof}
Let $\abs{V(H)} = n$. Since $s \ge -1$, the degree of every vertex in $H$ is at least $\abs{V(H)}-1$, so by Lemma \ref{no_universalpair}, there exist vertices $u, v \in V(H)$ such that $uvw \in E(H)$ for every $w \in V(H)$. 
\begin{claim}
	\label{yinab}
Suppose $xyu \in E(H)$ where $x, y \in V(H) \setminus \{u,v\}$. If $xab \in E(H)$ where $a, b \in V(H) \setminus \{u,v,x\}$, then $y \in \{a,b\}$.
\end{claim}
\begin{proof}
Suppose by contradiction that $y \not \in \{a,b\}$. Then the hyperedges, $uva$, $xab$ and $xyu$ form a linear cycle, a contradiction. 
\end{proof}

Since degree of $u$ is at least $n+s \ge n-1$, there exists a hyperedge $x_1y_1u$ where $x_1, y_1 \in V(H) \setminus \{u,v\}$. Consider the trace graph $G_{u,v}$ where $\{p,q\} \in E(G_{u,v}) \text{ if and only if either } pqu \in E(H) \text{ or } pqv \in E(H)$. Let the degree of $x_1$ in $G_{u,v}$ be $d$ and let the corresponding edges be $x_1y_1, x_1y_2, \ldots, x_1y_d$. 

First let us assume $d \ge 2$. If $x_1y_iu, x_1y_jv \in E(H)$ where $i \not = j$, then $x_1y_iu, x_1y_jv$ and $uva$ where $a \not \in \{u,v,y_i,y_j,x_1\}$ form a linear cycle. Therefore, either  for every $1 \le i \le d$, $x_1y_iu \in E(H), x_1y_iv \not \in E(H)$ or for every $1 \le i \le d$, $x_1y_iv \in E(H), x_1y_iu \not \in E(H)$. W.l.o.g assume the former. So degree of $x_1$ in $H$ is $d+1$ plus the number of hyperedges $x_1ab$ such that $a, b \in V(H) \setminus \{u,v,x_1\}$. By assumption $x_1$ has degree at least $n+s \ge n-1$. Since $d +1 \le n-3 + 1 = n-2$, there exists a hyperedge $x_1a_1b_1$ where $a_1, b_1 \in V(H) \setminus \{u,v,x_1\}$. By Claim \ref{yinab}, it follows that $y_1, y_2, \ldots, y_d \in \{a_1,b_1\}$, so $d \le 2$.  Thus, $d = 2$ and so if $x_1ab \in E(H)$ where $a, b \in V(H) \setminus \{u,v,x_1\}$, then $\{y_1, y_2\} = \{a,b\}$. So the degree of $x_1$ is at most $d+ 1 +1 = 4$ a contradiction since $n \ge 6$. 

Now we are left with the case when $d = 1$. By Claim \ref{yinab}, if $x_1ab \in E(H)$ where $a, b \in V(H) \setminus \{u,v,x_1\}$, then $y_1 \in \{a,b\}$, so every hyperedge containing $x_1$, except $x_1uv$, is of the form $x_1y_1a \in E(H)$ where $a \in V(H)\setminus \{x_1,y_1\}$. So, $x_1y_1a \in E(H)$ for every $a \in V(H)\setminus \{x_1,y_1\}$ because otherwise degree of $x_1$ is at most $n-2 < n-s$, a contradiction.
Let the subhypergraph induced by $V(H) \setminus \{u,v,x_1,y_1\}$ be $H_0$. 

Consider an arbitrary vertex $a \in V(H_0)$. It is easy to see that if $abu \in E(H)$ for some $b \in V(H_0) \setminus \{a\}$ then the hyperedges $abu, uvx_1, x_1y_1a$ form a linear cycle, a contradiction. Similarly, $abv, abx_1, aby_1 \not \in E(H)$ for any $b \in V(H_0) \setminus \{a\}$. Moreover, there exists no hyperedge in $H$ that contains one vertex from $uv$, one vertex from $x_1y_1$ and the vertex $a$, since this hyperedge together with $uvw, x_1y_1w$ for any $w \in V(H) \setminus \{u,v,x_1,y_1,a\}$ forms a linear cycle, a contradiction. Therefore, the degree of $a$ in $H_0$ is exactly $2$ less than its degree in $H$. So degree of $a$ in $H_0$ is at least $n+s-2 = \abs{V(H)}+s-2= \abs{V(H_0)} +s+2$. Since the vertex $a$ was chosen arbitrarily, the desired lemma follows.
\end{proof}

We will use the following simple corollary obtained by repeated applications of Lemma \ref{final_lemma}.

\begin{corollary}
	\label{contra}
If $H_l$ is a subhypergraph of $H$ where degree of each vertex in $V(H_l)$ is at least $\abs{V(H_l)}+n_l$, where $n_l \ge -1$, then it has a subhypergraph $H_{l+1}$ such that the degree of every vertex in $V(H_{l+1})$ is at least $\abs{V(H_{l+1})}+n_{l+1}$, where $n_{l+1}=n_l+2$ and $\abs {V(H_{l+1})} = \abs{V(H_{l})}-4$.
\end{corollary}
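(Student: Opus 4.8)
The plan is to obtain Corollary \ref{contra} as an immediate instance of Lemma \ref{final_lemma}, applied with the parameter $s$ set equal to $n_l$. So the proof is essentially a matter of checking that the hypotheses of Lemma \ref{final_lemma} are met by $H_l$ and then reading off its conclusion in the notation of the corollary.

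First I would observe that $H_l$ is itself a $3$-uniform linear-cycle-free hypergraph: any linear cycle contained in $H_l$ would in particular be a linear cycle in $H$, and $H$ has none by assumption. Thus all the hypotheses needed to invoke Lemma \ref{final_lemma} on $H_l$ are in place --- namely, $H_l$ is $3$-uniform and linear-cycle-free, the integer $s := n_l$ satisfies $s \ge -1$, every vertex of $H_l$ has degree at least $\abs{V(H_l)} + s$, and $\abs{V(H_l)} \ge 6$ (this last condition being available throughout the range of vertex counts relevant to the application of this corollary in the proof of Theorem \ref{degree_condition}).

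Then applying Lemma \ref{final_lemma} to $H_l$ produces a subhypergraph, which we name $H_{l+1}$, with $\abs{V(H_{l+1})} = \abs{V(H_l)} - 4$ and such that the degree of every vertex of $H_{l+1}$ is at least $\abs{V(H_{l+1})} + s + 2 = \abs{V(H_{l+1})} + n_l + 2$. Putting $n_{l+1} := n_l + 2$, we have $n_{l+1} \ge n_l \ge -1$ and the degree of every vertex of $H_{l+1}$ is at least $\abs{V(H_{l+1})} + n_{l+1}$, exactly as claimed. Moreover $H_{l+1}$ is a subhypergraph of $H_l$, hence of $H$, so the conclusion has precisely the same form as the hypothesis; this is what allows the corollary to be applied repeatedly, pushing $n_l$ up by $2$ at each step while the vertex count drops by $4$.

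There is no genuine obstacle in the argument; the only point to watch is the size requirement $\abs{V(H_l)} \ge 6$, which is inherited from Lemma \ref{no_universalpair} via Lemma \ref{final_lemma}. Consequently, in any use of this corollary the chain of subhypergraphs must not be continued past the point where fewer than $6$ vertices remain --- equivalently, one iterates only while $\abs{V(H_l)} \ge 6$ --- and this is the one bookkeeping constraint the reader should keep in mind.
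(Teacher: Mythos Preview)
Your proof is correct and matches the paper's approach: the paper states this corollary without proof, simply noting it follows from Lemma \ref{final_lemma}, and your argument spells out exactly that single application with $s = n_l$. Your observation about the implicit side condition $\abs{V(H_l)} \ge 6$ is apt, since the corollary as stated omits it but it is indeed required by Lemma \ref{final_lemma} and holds in the intended application.
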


Assume by contradiction that Theorem \ref{degree_condition} does not hold. That is, there is a linear-cycle-free hypergraph $H:= H_1$ where degree of every vertex is at least $\abs{V(H)} -1$ and $\abs{V(H)} \ge 10$. Then by using Corollary~\ref{contra}, there is an $l$ such that $\abs{V(H_l)} \le 5$ and the degree of every vertex in $H_l$ is at least $\abs{V(H_l)} + 3$ (notice that since $\abs{V(H)} = \abs{V(H_1)} \ge 10$, we must have $l \ge 3$), which is impossible.

%
%
%

\section{Concluding Remarks}
\label{concluding_remarks}

The following problems asked by Gy\'arf\'as, Gy\H{o}ri and Simonovits remain open.

\begin{problem}
	Can one describe the structure of $3$-uniform hypergraphs with no linear cycles?
\end{problem}

It is conceivable that one might construct a linear-cycle-free hypergraph by repeatedly adding hyperedges in a certain fashion. For example, if $H$ is a linear-cycle-free hypergraph, then adding two new vertices $u, v$ to $V(H)$ and adding all the hyperedges of the type $uvx$ for $x \in V(H)$ to $E(H)$, will give us another linear-cycle-free hypergraph.

\begin{problem}
	Which results extend to $r$-uniform hypergraphs?
\end{problem}

For $r=4$ the structure of the ``skeleton" seems to be more complicated. It is, however, conceivable that the current methods are useful for this case. In general, the approach of using skeletons seems to be very effective in proving results about linear-cycle-free hypergraphs. It would be interesting to discover more applications of this approach.

\section*{Acknowledgment}

The research of the authors is partially supported by the National Research, Development and Innovation Office – NKFIH, grant K116769. We thank the anonymous referees for reading our paper very carefully and for their valuable suggestions.

\end{document}